\newtheorem{lemma}{Lemma}
\newtheorem{corollary}{Corollary}
\newtheorem{theorem}{Theorem}
\newtheorem{remark}{Remark}
\newtheorem{observation}{Observation}
\newtheorem{definition}{Definition}
\newtheorem{proposition}{Proposition}
\def\BibTeX{{\rm B\kern-.05em{\sc i\kern-.025em b}\kern-.08em
    T\kern-.1667em\lower.7ex\hbox{E}\kern-.125emX}}
\begin{document}

\title{Gramians for a New Class of Nonlinear Control Systems Using Koopman and a Novel Generalized SVD
}

\author{Brian Charles Brown$^1$, Michael King$^1$
\thanks{1 - Department of Computer Science, Brigham Young University, UT, 84602.
This work was funded by DOE Grant \#SC0021693.
Correspondence should be addressed to Brian Brown at \texttt{bcbrown365@gmail.com}.}}

\maketitle

\begin{abstract}
Certified model reduction for high-dimensional nonlinear control systems remains challenging: unlike balanced truncation for LTI systems, most nonlinear reduction methods either lack computable worst-case error bounds or rely on intractable PDEs. Data-driven Koopman/DMDc surrogates improve tractability, but standard \emph{input lifting} can distort the physical input-energy metric, so $H_\infty$ and Hankel-based bounds computed on the lifted model may be valid only in a lifted-input norm and need not certify the original system. 
We address this metric mismatch by a Generalized Singular Value Decomposition (GSVD)-based construction that represents general (including non-affine) input nonlinearities in an LTI-like lifted form with a \emph{pointwise norm-preserving} input map $v(x,u)$ satisfying $\|v(x,u)\|_2=\|u\|_2$ and constant matrices $A,B$. This preserves strict causality (constant $B$, no input-history augmentation) and yields computable Hankel-singular-value-based $H_\infty$ error certificates in the physical input norm for reduced-order surrogates. 
We illustrate the method on a 25-dimensional Hodgkin--Huxley network with saturating optogenetic actuation, reducing to a single dominant mode while retaining certified error bounds.
\end{abstract}

\section{Introduction}

\subsection{Linear Certification via Bounds}

Model reduction for linear systems is, in many respects, a solved problem. This maturity is largely due to the existence of a complete theoretical framework rooted in the Controllability and Observability Gramians for linear time-invariant (LTI) systems \cite{kalman1963mathematical}. These Gramians provide a fundamental decomposition of the system's Hankel operator, which maps past inputs to future outputs, allowing for the systematic identification of states that contribute negligibly to the system's input-output energy transfer.

The primary value of this framework lies in its ability to provide certification. The Hankel singular values derived from these Gramians yield a rigorous a priori $H_\infty$ upper bound on the error between the full-order LTI model and its reduced surrogate \cite{dullerud2000chapter4}. This bound guarantees the worst-case performance deviation across all frequencies. While Dullerud and Paganini \cite{dullerud2000chapter4} note that this theoretical bound is often conservative rather than tight for real world systems, it remains the standard for validation. In safety-critical scenarios where stability margins must be guaranteed before deployment, the existence of such a certifiable error bound distinguishes the linear theory from the vast majority of nonlinear reduction techniques.

The linear guarantee belies a structural point for nonlinear surrogate modeling, namely that certificates are norm-dependent statements, and balanced truncation is no exception. The classical \(H_\infty\) reduction bound controls the induced gain from the surrogate’s input norm to the output norm. The interpretability of this bound therefore relies on that input norm being physically meaningful. In Koopman/DMDc-style surrogates, the “lifted input” \(v(x,u)\) is typically introduced as a regressor, but unless \(u \mapsto v(x,u)\) is norm-calibrated, the resulting Hankel singular values certify error only with respect to \(\|v\|\), not the physical control norm \(\|u\|\). Consequently, an \(H_\infty\) certificate computed on the lifted model can fail to certify the original control system. 
Motivated by this limitation, we next contrast existing nonlinear reduction approaches with a Generalized Singular Value Decomposition (GSVD) based-based framework that explicitly resolves the input-metric ambiguity and enables computable $H_\infty$ certification for nonlinear systems.

\begin{table*}[t]
    \centering
    \caption{Comparison of Nonlinear Model Reduction Frameworks}
    \label{tab:full_comparison}
    \resizebox{\textwidth}{!}{%
    \begin{tabular}{l c c c c l}
        \toprule
        \textbf{Approach} & \textbf{A priori} & \textbf{Computable?} & \textbf{Non-Affine} & \textbf{Causal} & \textbf{Resulting} \\
        & \textbf{Bounds?} & & \textbf{Inputs?} & \textbf{Structure?} & \textbf{Model} \\
        \midrule
        \textbf{Energy \& Differential} & & & & & \\
        Scherpen / Gray / Fujimoto \cite{scherpen1993balancing, gray1998hankel, fujimoto2005nonlinear} & $\times$ & $\times$ & $\times$ & $\checkmark$ & Balanced (not reduced) \\
        Besselink \cite{besselink2014incremental} & $\checkmark$ & \textasciitilde & $\times$ & $\checkmark$ & Nonlinear \\
        Gray \& Verriest \cite{gray2006algebraic} & \textasciitilde & $\checkmark$ & $\times$ & $\checkmark$ & Nonlinear \\
        \midrule
        \textbf{Empirical \& Data-Driven} & & & & & \\
        Lall / Hahn / Condon \& Ivanov \cite{lall2002subspace, hahn2002improved, condon2004empirical} & $\times$ & $\checkmark$ & $\times$ & $\checkmark$ & Nonlinear \\
        Himpe \cite{himpe2014cross, himpe2018emgr} & $\times$ & $\checkmark$ & $\checkmark$ & $\checkmark$ & Nonlinear \\
        Kawano \cite{kawano2021empirical} & $\times$ & $\checkmark$ & $\times$ & $\checkmark$ & Nonlinear \\
        \midrule
        \textbf{Koopman} & & & & & \\
        Proctor / Yeung \cite{proctor2016dynamic, yeung2018koopman} & $\times$ & $\checkmark$ & \textasciitilde & $\times$ & Linear (LTI) \\
        Liu et al. \cite{liu2018decomposition} & $\times$ & $\checkmark$ & \textasciitilde & $\times$ & Linear (LTI) \\
        Goswami \cite{goswami2017global} & $\times$ & \textasciitilde & $\times$ & $\checkmark$ & Bilinear \\
        Haseli \cite{haseli2025koopman, haseli2025roadskoopmanoperatortheory} & $\checkmark$ & \textasciitilde & $\checkmark$ & $\checkmark$ & LPV / Infinite \\
        Par\'e (MBAM) \cite{pare2019unifying} & $\times$ & \textasciitilde & $\checkmark$ & $\checkmark$ & Simplified Nonlinear \\
        \midrule
        \textbf{This Work (Generalized SVD)} & $\checkmark$ & $\checkmark$ & $\checkmark$ & $\checkmark$ & \textbf{Linear (LTI)} \\
        \bottomrule
    \end{tabular}
    }
    \vspace{1ex}
    
    {\raggedright \footnotesize \textit{Legend:} $\checkmark$ = Yes; $\times$ = No; \textasciitilde = Partial/Moderate. ``Computable?'' refers to reliance on standard linear algebra vs. PDEs/LMIs. ``Causal Structure?'' refers to independence from future inputs or infinite delay embeddings. “A priori bounds?” refers to computable worst-case input-output error bounds (e.g., $H_\infty$ or $L_2$) suitable for robust control certification.
 \par}
\end{table*}
\subsection{The Geometric and Projection Alternatives}

It is useful to separate two distinct objectives that are often conflated in nonlinear reduction: \emph{trajectory compression} (approximating state snapshots efficiently) versus \emph{system certification} (bounding the induced input--output error under worst-case disturbances). Many widely used nonlinear reduction methods primarily target the former, and therefore do not directly address the norm-dependent input--output certification issue highlighted above.

Projection-based methods such as Proper Orthogonal Decomposition (POD) with Galerkin projection \cite{holmes1996book} exemplify this distinction. POD identifies low-dimensional subspaces that capture dominant variance/energy in observed state trajectories and is effective for accelerating simulation in settings such as fluid dynamics and continuum mechanics. However, because the objective is state-energy capture rather than operator gain control, POD-based reduced models typically provide accuracy guarantees of a signal-approximation type, not computable worst-case input--output bounds of the $H_\infty$ form required for robust synthesis.

Complementary geometric approaches reduce complexity by exploiting structure in the model-to-data map rather than by compressing reachable/observable energy. The Manifold Boundary Approximation Method (MBAM) uses the Fisher Information Matrix to identify ``sloppy'' parameter combinations and simplifies the governing equations by moving toward lower-dimensional boundaries of the model manifold \cite{transtrum2014model, transtrum2016bridging, transtrum2017measurement}. Par\'e et al.~\cite{pare2019unifying} show that this geometric viewpoint recovers Balanced Truncation and Singular Perturbation Approximation as limiting cases for linear systems, demonstrating that classical energy-based reduction can be interpreted as a geometric contraction of the input--output map. For general nonlinear systems, however, MBAM is primarily a tool for parameter reduction and physical simplification; it does not, by itself, produce computable induced-gain error bounds for controller certification.

These alternatives motivate the central control-theoretic difficulty of obtaining reduction procedures that retain the energy/Hankel interpretation needed for worst-case input--output bounds while remaining computationally tractable. The next subsection reviews the ``energy lineage'' of nonlinear balancing methods through this lens.

\subsection{The Energy Lineage: Rigor vs. Computability}

This energy/Hankel lineage starts with Scherpen \cite{scherpen1993balancing}, who extended balanced truncation to nonlinear systems through controllability and observability energy functions. This framework provides a rigorous extension of the linear theory, preserving the physical interpretation of the Gramians as metrics for the energy required to reach a state and the energy produced by that state. 
Gray and Scherpen \cite{gray1998hankel} later formally defined a nonlinear Hankel operator and proved an associated factorization into nonlinear controllability and observability operators, which they then related to the corresponding energy functions. Fujimoto and Scherpen \cite{fujimoto2005nonlinear} further refined the theory by analyzing the differential eigenstructure of these operators, demonstrating that the \emph{state-dependent singular value functions} of a nonlinear system can be characterized in a coordinate-independent manner. 
The energy/Hankel-operator framework is theoretically robust, but does not directly yield computable a priori input--output error bounds of the type used for worst-case robust synthesis. 

However, the practical application of these rigorous methods faces a formidable barrier: computing the exact energy functions requires solving Hamilton-Jacobi-Bellman (HJB) partial differential equations. For general nonlinear systems with state dimensions larger than a few variables, solving these PDEs is computationally intractable. To overcome this barrier, Gray and Verriest \cite{gray2006algebraic} proposed replacing the differential equations with algebraic generalized Lyapunov equations, offering a computationally feasible approximation that bounds the true energy functions. Finally, bypassing the need for explicit system equations entirely, the field has pivoted toward empirical methods, such as the work by Kawano and Scherpen \cite{kawano2021empirical}, which approximate these Gramians directly from simulation data along system trajectories.
Lall et al. \cite{lall2002subspace}, Hahn and Edgar \cite{hahn2002improved}, and Condon and Ivanov \cite{condon2004empirical} proposed computing ``empirical Gramians" by averaging state trajectory snapshots generated from specific perturbations, such as impulsive inputs. While this approach successfully achieved computability for nonlinear systems, the specific reliance on impulsive inputs (Dirac deltas) mathematically restricts these initial methods to control-affine systems, as non-affine terms (e.g., $u^2$) render the system response to an impulse undefined.
Himpe \cite{himpe2018emgr} later generalized this framework by allowing for arbitrary training inputs (e.g., step functions or chirps), thereby relaxing the impulse-based control-affine restriction in empirical Gramian computation. However, all these empirical methods necessitate a trade-off: they sacrifice the global validity of the original linear theory, abandoning rigorous a priori error bounds (such as $H_\infty$) in exchange for numerical feasibility and local accuracy within a specific operating region.

In contrast to the heuristic nature of the standard empirical methods, Besselink et al. \cite{besselink2014incremental} established a rigorous framework based on incremental stability, proving that if generalized gramians satisfying specific Linear Matrix Inequalities (LMIs) are found, the reduced model is guaranteed to be stable and satisfy an \textit{a priori} $L_2$ error bound.
Kawano and Scherpen \cite{kawano2021empirical} later bridged the gap between rigorous differential theory and trajectory-wise empirical computability by introducing empirical differential gramians, which allow the variational system properties to be estimated directly from trajectory data rather than solving nonlinear PDEs. However, both approaches currently face topological limitations: they are mathematically restricted to systems with constant input vector fields (a subset of control-affine systems where $g(x) = B$) and, like the other methods, yield reduced models that remain nonlinear.

\subsection{The Data-Driven Era: Koopman and the Control Problem}

Parallel to these structural developments, the renaissance of Koopman operator theory has offered an alternative path comprised of embedding nonlinear dynamics into a higher-dimensional linear framework to leverage standard spectral analysis tools. Schmid \cite{schmid2010dynamic} and Mezi\'c \cite{mezic2013analysis} demonstrated that the global behavior of nonlinear flows could be characterized by the eigenvalues and eigenfunctions of the linear Koopman operator. This insight has led to the development of many data-driven techniques, most notably Extended Dynamic Mode Decomposition (EDMD) \cite{williams2015data}, which approximates the infinite-dimensional operator using a finite dictionary of observable functions \cite{brunton2022modern}.

However, incorporating control inputs into this operator-theoretic framework has proven to be a source of significant theoretical conflict. The most common data-driven approach, as popularized by Proctor et al.~\cite{proctor2016dynamic}, Yeung et al.~\cite{yeung2018koopman}, and Korda and Mezi\'c~\cite{korda2018linear}, relies on ``input lifting'' or linear predictors, often by stacking \(u\) alongside state observables.
This is computationally convenient, though it entangles two issues that are easy to overlook and are fatal for certification: \emph{(i)} causality and \emph{(ii)} metric fidelity.
First, it can blur the distinction between state evolution and actuation, implicitly importing a dependence on embedded input histories that departs from the classical state–space notion of causality when the input is embedded into the lifted observable.
For example, in formulations that evolve a control-augmented Koopman observable \(\varphi(x,u)\), retaining a single linear propagator across time is usually paired with the assumption that \(u\) can be treated as an exogenous signal without its own state-space dynamics (cf.~\cite{yeung2018koopman}).
The second issue is metric fidelity. If the lifted input \(\varphi(x,u)\) is not calibrated to the norm of the input, then any \(H_\infty\) or Hankel-based bound computed on the lifted surrogate is expressed in a different input norm and therefore cannot certify the original control input \(u\).
Even when one can compute Gramians for the lifted system, the resulting Hankel singular values are not interpretable as \(H_\infty\) reduction bounds for the underlying nonlinear system unless the input lifting preserves the relevant input metric.

Recent theoretical work adds that a fully consistent Koopman treatment of open-loop control cannot, in general, retain a single finite-dimensional, time-homogeneous linear propagator without either 1) encoding the full input sequence or 2) allowing the operator to vary with the input. In particular, Haseli et al.~\cite{haseli2025roadskoopmanoperatortheory} show that rigorous formulations lead to either infinite input-sequence representations or operator families (KCF), whose finite-dimensional restrictions are necessarily input-dependent (LPV) \cite{haseli2025koopman}. The next subsection states how we preserve Koopman structure where it is naturally LTI (the autonomous drift) while keeping the remaining input dependence isolated in such a way that reduction bounds can be placed on reduced order models.

\subsection{The Contribution: A Causal, Certified, LTI Synthesis}

We adopt a different synthesis target than a full Koopman representation of the open-loop control system. We use Koopman lifting only for the autonomous (unforced) dynamics to obtain a fixed LTI core on the lifted state ($\varphi(x)$).
Actuation then enters through a constant input matrix ($B$) driven by an instantaneous lifted input ($v(x,u)$), echoing the standard input-lifting template used in Koopman/DMDc-style predictors~\cite{korda2018linear, yeung2018koopman}. Without further constraints this representation suffers the same limitation that the LTI core can be reduced, but the resulting Hankel-based certificate is expressed in the lifted-input metric. We resolve this by imposing additional structure on (v) that restores correspondence with the physical input norm.

Our primary contribution is a Generalized Singular Value Decomposition (GSVD)-based factorization that restores the LTI Hankel framework while remaining consistent with the \emph{physical} input metric. We use this framework to introduce a state-dependent instantaneous lifted input map, \(v\), satisfying the pointwise norm-preservation constraint
\[
\|v(x,u)\|_2=\|u\|_2 \quad \forall(x,u),
\]
and we represent the lifted dynamics in a balanced LTI-like form
\[
D\varphi(x)\,f(x,u) = A\varphi(x)+Bv(x,u)
\]
where $D\varphi(x)$ is the Jacobian of the lifting, and where the matrices $A$ and $B$ are constant. Under this constraint, Hankel singular values computed from the lifted LTI core admit a physically meaningful \(H_\infty\) interpretation with respect to the original control input (u), rather than the surrogate regressor norm. The GSVD construction is the mechanism that makes this representation achievable for general (including non-affine) input nonlinearities by isolating gain into fixed linear factors and confining the remaining nonlinearity to the norm-preserving input channel.

Theorem \ref{theorem:norm_preserving} (calibrated lifting) establishes the norm-preserving lifted representation with constant $B$; Theorem \ref{theorem:final_bound} (certified truncation) gives the certified reduction bound under exact Koopman closure; Theorem \ref{theorem:bound_with_error} (closure-error deformation; Appendix) shows how the certificate deforms under Koopman closure error via a small-gain feedback interpretation.

This approach offers five distinct advantages over the state of the art:
\begin{enumerate}
    \item \textbf{Causal Structure:} Unlike input-lifting approaches \cite{korda2018linear} that embed the control history into the state vector, we preserve an explicit, constant $B$-matrix. This ensures strict causality by treating the control influence as an instantaneous exogenous driver rather than a state to be predicted.
    \item \textbf{LTI Simplicity:} Unlike the bilinear forms derived by Goswami and Paley \cite{goswami2017global} or the parameter-varying families required by Haseli et al. \cite{haseli2025roadskoopmanoperatortheory}, our method yields a reduced model with constant system matrices ($A, B$). This allows for the application of standard LTI control synthesis tools with minor modifications.
    \item \textbf{Non-Affine Support:} By capturing the input-state interaction within the signal $v(x,u)$, our framework handles general non-affine inputs. 
    \item \textbf{Rigorous Bounds:} Most critically, by enforcing norm preservation in the lifting, we prove that the Hankel singular values of the lifted surrogate contribute to a true, certified $H_\infty$ error bound for the original nonlinear input--output map in the \emph{physical} input-energy metric. Conceptually, this aligns with the nonlinear Hankel-operator viewpoint of Gray and Scherpen \cite{gray1998hankel} (which generalizes Hankel factorizations and Gramian-like objects to nonlinear systems), while our contribution is to make the resulting certificate \emph{computable} and \emph{metric-consistent} via input-energy calibration.
    \item \textbf{Certified Neural Representations:} While data-driven methods like \cite{liu2018decomposition} utilize deep learning to approximate Koopman observables, they typically lack safety guarantees for the resulting closed-loop system. We demonstrate training neural networks to parameterize the lifting components ($A, B, \varphi, v$) subject to our norm-preservation constraint, while Theorems 2 and 3 provide rigorous a priori error bounds for model reduction with this neural surrogate system. This effectively enables the ``certification" of deep-learning-based models and their reduced-order derivatives, ensuring they satisfy stability margins required for safety-critical control.
\end{enumerate}

\section{Background and Notation}

We will consider vector norms, norms of infinite-dimensions, function norms, induced norms of finite-dimensional functions, and induced operator norms.
These are distinguished in Table \ref{tabs:norms}.

\begin{table}[h]
\centering
\begin{tabularx}{\columnwidth}{|c|c|X|}
\hline
\label{tabs:norms}
\textbf{Notation} & \textbf{Type of Object} & \textbf{Definition} \\ \hline
$\|f(x)\|_2$ & $f(x) \in \mathbb{R}^m$ & $\left( \sum_{i=1}^m |f_i(x)|^2 \right)^{1/2}$ \\ \hline
$\|u\|_{L_2}$ & $u(t) \in L_2$ & $\left( \int_{-\infty}^{\infty} \|u(t)\|_2^2 \,dt \right)^{1/2}$ \\ \hline
$\|f\|_{2\to2}$ & Mapping $f: \mathbb{R}^n \to \mathbb{R}^m$ & $\sup_x \frac{\|f(x)\|_2}{\|x\|_2}$ \\ \hline
$\|G\|_{H_{\infty}}$ & Operator $G: L_2 \to L_2$ & $\sup_u \frac{\|G(u)\|_{L_2}}{\|u\|_{L_2}}$ \\ \hline
\end{tabularx}
\caption{Summary of norms used throughout the paper.}
\end{table}

We will frequently reference functions with ``finite 2-induced norm", i.e. functions, $f$, satisfying $\|f\|_{2 \to 2} < \infty$, or more concretely:
\begin{align}
\sup_{x\neq0} \frac{\|f(x)\|_2}{\|x\|_2} < \infty.
\end{align}

We will frequently make sue of a restricted induced gain on an admissible input class.
Let $\mathcal{U}\subset L^2$ denote an admissible input class with the property that, for every $u(\cdot)\in\mathcal{U}$ and every system considered, the corresponding solution exists for all $t\ge 0$ and the resulting state trajectory remains in the prescribed compact set $\mathcal{X}$ (equivalently, in balanced coordinates, $z$, with some state-recovering transform, $R$, then $Rz(t)\in\mathcal{X}$).
For such an admissible input class $\mathcal{U}$, define the restricted induced gain
\[
\|G\|_{H_\infty(\mathcal{U})} \triangleq \sup_{u\in\mathcal{U}\setminus\{0\}} \frac{\|G(u)\|_{L^2}}{\|u\|_{L^2}}.
\]
When $G$ is stable LTI, we use the classical $H_\infty$ norm (equivalently, take $\mathcal{U}=L^2$) and write $\|G\|_{H_\infty}$.

Throughout, $D\varphi_0(x)$ denotes the Jacobian matrix of the lifting map
$\varphi:\mathbb{R}^n\to\mathbb{R}^q$, evaluated at the point $x$.
That is, $D\varphi(x)\in\mathbb{R}^{q\times n}$ is the matrix of first-order partial derivatives of $\varphi$ with respect to $x$.

\section{Results}
\subsection{Preliminary Results: Generalized SVD}
\label{sec:gsvd_prelims}

The aim of this subsection is to characterize how a nonlinear map contributes induced gain in a form that is geometric, anisotropic, and compatible with certified LTI analysis. Rather than summarizing the map by a single worst-case scalar, we seek a representation that isolates and orders the amplification associated with individual output directions through a fixed linear structure. The development generalizes the construction in \cite{brown2024svd}.

We proceed by first introducing a \emph{gain cage}: a coordinate-dependent bound that constrains amplification along each output axis in a chosen output basis. This replaces a global Lipschitz constant with axis-dependent gain limits and yields an anisotropic description suitable for dynamical analysis. We then show that a gain cage with strict margin implies a structural factorization: the map can be written as a fixed linear gain operator acting on a nonlinear lift that is injective and norm-preserving. In this factorization, all amplification is confined to a constant linear object, while the remaining nonlinearity preserves input energy pointwise.

\begin{definition}[Diagonal gain cage]
\label{def:diagonal_gain_cage}

Let \(f:\mathbb{R}^n \to \mathbb{R}^m\) satisfy \(f(0)=0\).
Fix an orthogonal matrix \(U \in \mathbb{R}^{m\times m}\) and a diagonal matrix
\(D=\mathrm{diag}(\sigma_1,\dots,\sigma_m)\succ 0\).
For a constant \(\beta\ge 0\), we say that the pair \((U,D)\)
\emph{\(\beta\)-cages} \(f\) if
\begin{equation}
\label{eq:gain_cage_condition}
\bigl\| D^{-1} U^\top f(x) \bigr\|_2
\;\le\;
\beta\,\|x\|_2
\qquad
\forall x\in\mathbb{R}^n\setminus\{0\}.
\end{equation}

Equivalently, the image of the unit ball under \(f\) satisfies
\[
\|D^{-1}U^\top f(x)\|_2 \le \beta
\qquad \forall x \text{ with } \|x\|_2 \le 1.
\]
\end{definition}

\begin{remark}[Geometric interpretation]
\label{rem:gain_cage_geometry}

The matrix \(U\) selects an output coordinate system, while the diagonal
matrix \(D\) prescribes axis-dependent gain limits.
The constant \(\beta\) quantifies how tightly the image of the unit input
ball is confined within the resulting ellipsoid.

The threshold \(\beta\le 1\) ensures that the
the lift constructed in Lemma~\ref{lem:gain_caged_lift} can be chosen real-valued, while a strict margin
\(\beta<1\) guarantees injectivity.
\end{remark}

\begin{lemma}[Gain-caged lift via a support/kernel split]
\label{lem:gain_caged_lift}
Let \(f:\mathbb{R}^n\to\mathbb{R}^m\) satisfy \(f(0)=0\), and set \(l\triangleq n+m\).
Assume there exist an orthogonal matrix \(U\in\mathbb{R}^{m\times m}\), a diagonal matrix \(D=\mathrm{diag}(\sigma_1,\dots,\sigma_m)\succ 0\), and a constant \(\beta\in[0,1)\) such that \((U,D)\) \(\beta\)-cages \(f\) in the sense of Equation \eqref{eq:gain_cage_condition}.

Define the rectangular diagonal matrix
\begin{equation}
\label{eq:Sigma_from_D}
\Sigma \triangleq \begin{bmatrix} D & 0_{m\times n}\end{bmatrix}\in\mathbb{R}^{m\times l}.
\end{equation}
Then there exists an injective mapping \(v:\mathbb{R}^n\to\mathbb{R}^l\) satisfying
\(\|v(x)\|_2=\|x\|_2\) for all \(x\in\mathbb{R}^n\), such that
\begin{equation}
\label{eq:gain_caged_factorization}
f(x)=U\Sigma v(x)\quad \forall x\in\mathbb{R}^n.
\end{equation}
\end{lemma}

\begin{proof}
\emph{Construction.}
Since \(\Sigma=[D\ \ 0]\) with \(D\succ 0\), its Moore--Penrose pseudoinverse is
\[
\Sigma^\dagger
=
\begin{bmatrix}
D^{-1}\\
0_{n\times m}
\end{bmatrix}\in\mathbb{R}^{l\times m},
\qquad\text{and hence}\qquad
\Sigma\Sigma^\dagger = I_m.
\]
Define the support component
\begin{equation}
\label{eq:v_support_gain_caged}
v_{\mathrm{support}}(x)\triangleq \Sigma^\dagger U^\top f(x)
=
\begin{bmatrix}
D^{-1}U^\top f(x)\\
0_n
\end{bmatrix}\in\mathbb{R}^l.
\end{equation}
For \(x\neq 0\), define the scalar
\begin{equation}
\label{eq:alpha_def}
\alpha(x)\triangleq
\sqrt{1-\frac{\|v_{\mathrm{support}}(x)\|_2^2}{\|x\|_2^2}},
\end{equation}
and define the kernel component
\begin{equation}
\label{eq:v_kernel_gain_caged}
v_{\mathrm{kernel}}(x)\triangleq
\begin{bmatrix}
0_m\\
\alpha(x)\,x
\end{bmatrix}\in\mathbb{R}^l.
\end{equation}
Finally set
\begin{equation}
\label{eq:v_def_gain_caged}
v(x)\triangleq v_{\mathrm{support}}(x)+v_{\mathrm{kernel}}(x)
\quad\text{for }x\neq 0,
\;
v(0)\triangleq 0.
\end{equation}

\emph{Real-valuedness (radicand positivity).}
By \eqref{eq:gain_cage_condition} and \eqref{eq:v_support_gain_caged},
\[
\|v_{\mathrm{support}}(x)\|_2=\|D^{-1}U^\top f(x)\|_2 \le \beta\|x\|_2
\quad\forall x\neq 0.
\]
Hence the radicand in \eqref{eq:alpha_def} satisfies
\[
1-\frac{\|v_{\mathrm{support}}(x)\|_2^2}{\|x\|_2^2}\ge 1-\beta^2>0,
\]
so \(\alpha(x)\) is well-defined and strictly positive for every \(x\neq 0\).

\emph{Norm preservation.}
The vectors \(v_{\mathrm{support}}(x)\) and \(v_{\mathrm{kernel}}(x)\) have disjoint support
(first \(m\) coordinates versus last \(n\) coordinates), hence are orthogonal in \(\mathbb{R}^l\).
Therefore, for \(x\neq 0\),
\begin{equation}
\begin{aligned}
\|v(x)\|_2^2
&= \|v_{\mathrm{support}}(x)\|_2^2
  + \|v_{\mathrm{kernel}}(x)\|_2^2 \\
&= \|v_{\mathrm{support}}(x)\|_2^2
  + \alpha(x)^2 \|x\|_2^2 \\
&= \|x\|_2^2 .
\end{aligned}
\end{equation}
by the definition of \(\alpha(x)\). Also \(\|v(0)\|_2=0=\|0\|_2\).

\emph{Reconstruction.}
For any \(x\),
\[
\Sigma v_{\mathrm{support}}(x)
=
\Sigma\Sigma^\dagger U^\top f(x)
=
U^\top f(x),
\quad
\Sigma v_{\mathrm{kernel}}(x)=0
\]
(the latter since the last \(n\) columns of \(\Sigma\) are zero). Thus \(\Sigma v(x)=U^\top f(x)\),
and multiplying by \(U\) gives \(U\Sigma v(x)=f(x)\) for all \(x\), including \(x=0\) because \(f(0)=0\).

\emph{Injectivity.}
Let \(x\neq 0\). The last \(n\) coordinates of \(v(x)\) equal \(\alpha(x)x\) with \(\alpha(x)>0\).
Moreover, \(\|v(x)\|_2=\|x\|_2\), so from \(v(x)\) alone we can recover
\[
\alpha(x)=\frac{\|\,v_{m+1:l}(x)\,\|_2}{\|v(x)\|_2},
\qquad
x=\frac{v_{m+1:l}(x)}{\alpha(x)}.
\]
Hence \(v(x_1)=v(x_2)\) implies \(x_1=x_2\), so \(v\) is injective.
\end{proof}

\begin{definition}[Directional gains and aggregation constant]
\label{def:aggregation_constant}
Let \(f:\mathbb{R}^n\to\mathbb{R}^m\) satisfy \(\|f\|_{2\to 2}<\infty\) and \(f(0)=0\).
Fix an orthogonal matrix \(U=[u_1,\dots,u_m]\in\mathbb{R}^{m\times m}\).

\smallskip
\noindent\textbf{Directional induced gains.}
For each \(i=1,\dots,m\), define the induced gain of the scalar functional \(u_i^\top f\) by
\begin{equation}
\label{eq:directional_gains}
c_i(U)\triangleq \big\|u_i^\top f\big\|_{2\to 2}
=
\sup_{x\in\mathbb{R}^n\setminus\{0\}}
\frac{|u_i^\top f(x)|}{\|x\|_2}.
\end{equation}
Let \(D_U\triangleq \mathrm{diag}(c_1(U),\dots,c_m(U))\), and let \(D_U^\dagger\) denote its diagonal Moore--Penrose pseudoinverse.

\smallskip
\noindent\textbf{Aggregation constant.}
The \emph{aggregation constant} of \(f\) in the \(U\)-coordinates is
\begin{equation}
\label{eq:kappa_def}
\kappa(U)\triangleq
\big\|D_U^\dagger U^\top f\big\|_{2\to 2}
=
\sup_{x\in\mathbb{R}^n\setminus\{0\}}
\frac{\big\|D_U^\dagger U^\top f(x)\big\|_2}{\|x\|_2}.
\end{equation}
\end{definition}

\noindent\textbf{Interpretation.}
The diagonal \(D_U\) captures anisotropy in the directional gains \(c_i(U)\), while \(\kappa(U)\) measures how strongly these normalized directions can \emph{co-saturate} for the same input.
Equivalently, \(\kappa(U)\) is the smallest constant \(\kappa\ge 0\) such that
\begin{equation}
\label{eq:kappa_equiv_ineq}
\big\|D_U^\dagger U^\top f(x)\big\|_2 \le \kappa\,\|x\|_2
\quad \forall x\in\mathbb{R}^n\setminus\{0\}.
\end{equation}
\begin{remark}[Zero directional gain implies an identically zero channel]
\label{rem:zero_directional_gain}
Fix \(U=[u_1,\dots,u_m]\) and let \(c_i(U)\) be defined by \eqref{eq:directional_gains}.
If \(c_i(U)=0\), then \(u_i^\top f(x)=0\) for all \(x\in\mathbb{R}^n\).
\end{remark}
\begin{proof}
By definition,
\[
c_i(U)=\sup_{x\neq 0}\frac{|u_i^\top f(x)|}{\|x\|_2}.
\]
If \(c_i(U)=0\), then \(|u_i^\top f(x)|/\|x\|_2=0\) for every \(x\neq 0\), hence \(u_i^\top f(x)=0\) for all \(x\neq 0\).
Also \(u_i^\top f(0)=0\) since \(f(0)=0\).
\end{proof}

\begin{corollary}[Universal bounds for the aggregation constant]
\label{cor:kappa_bounds}
Let \(f:\mathbb{R}^n\to\mathbb{R}^m\) satisfy \(\|f\|_{2\to 2}<\infty\) and \(f(0)=0\), and fix any orthogonal matrix \(U\in\mathbb{R}^{m\times m}\).
Let \(\kappa(U)\) be defined as in Definition~\ref{def:aggregation_constant}.
Then
\begin{equation}
\label{eq:kappa_bounds}
0 \le \kappa(U)\le \sqrt{m}.
\end{equation}
Moreover, if \(f\not\equiv 0\) (equivalently, \(c_i(U)>0\) for some \(i\)), then
\begin{equation}
\label{eq:kappa_bounds_nontrivial}
1 \le \kappa(U)\le \sqrt{m}.
\end{equation}
\end{corollary}

\begin{proof}
\emph{Upper bound.}
Write \(U=[u_1,\dots,u_m]\) and recall \(D_U=\mathrm{diag}(c_1(U),\dots,c_m(U))\).
For any \(x\neq 0\), the \(i\)-th coordinate of \(D_U^\dagger U^\top f(x)\) equals
\[
\big(D_U^\dagger U^\top f(x)\big)_i
=
\begin{cases}
\dfrac{u_i^\top f(x)}{c_i(U)}, & c_i(U)>0,\\[6pt]
0, & c_i(U)=0,
\end{cases}
\]
by the definition of the diagonal pseudoinverse.
If \(c_i(U)>0\), then by definition of \(c_i(U)\),
\[
\left|\frac{u_i^\top f(x)}{c_i(U)}\right|
\le \|x\|_2.
\]
Hence every coordinate of \(D_U^\dagger U^\top f(x)\) has magnitude at most \(\|x\|_2\), so
\[
\big\|D_U^\dagger U^\top f(x)\big\|_2
\le \sqrt{m}\,\|x\|_2
\quad \forall x\neq 0.
\]
Taking the supremum over \(x\neq 0\) yields \(\kappa(U)\le \sqrt{m}\).
Nonnegativity \(\kappa(U)\ge 0\) is immediate.

\emph{Lower bound for nontrivial \(f\).}
If \(f\equiv 0\), then \(D_U=0\), \(D_U^\dagger U^\top f\equiv 0\), and hence \(\kappa(U)=0\).
Otherwise, choose an index \(i_0\) such that \(c_{i_0}(U)>0\).
By definition of the supremum, there exists a sequence \(\{x_k\}_{k\ge 1}\subset\mathbb{R}^n\setminus\{0\}\) such that
\[
\frac{|u_{i_0}^\top f(x_k)|}{\|x_k\|_2}\to c_{i_0}(U).
\]
For these \(x_k\),
\[
\frac{\big\|D_U^\dagger U^\top f(x_k)\big\|_2}{\|x_k\|_2}
\ge
\frac{1}{\|x_k\|_2}\left|\frac{u_{i_0}^\top f(x_k)}{c_{i_0}(U)}\right|
=
\frac{|u_{i_0}^\top f(x_k)|}{c_{i_0}(U)\,\|x_k\|_2}
\to 1.
\]
Taking the supremum over \(x\neq 0\) gives \(\kappa(U)\ge 1\).
\end{proof}

The bounds above show that, for a fixed orthogonal output basis, the aggregation constant $\kappa(U)$ quantifies how strongly distinct output directions of (f) can be simultaneously excited by a single input. In the worst case this co-saturation produces a $\sqrt{m}$ inflation, while in the best nontrivial case the aggregation penalty collapses to its minimal value (1).

This raises a structural question: under what conditions does the aggregation constant attain its minimum? Equivalently, when do the directional gains in a fixed output basis decouple so that no input direction can simultaneously excite more than one output coordinate? The following definition isolates a sufficient regime: directional gains are realized on mutually orthogonal components of the input, eliminating aggregation and enforcing the geometric lower bound \(\kappa(U)=1\).
\begin{definition}[Orthogonal Energy Partition (OEP)]
\label{def:oep}
Let \(f:\mathbb{R}^n\to\mathbb{R}^m\) satisfy \(f(0)=0\) and \(\|f\|_{2\to 2}<\infty\).
We say that \(f\) admits an \emph{orthogonal energy partition} if there exist
\begin{enumerate}
\item an orthogonal matrix \(U=[u_1,\dots,u_m]\in\mathbb{R}^{m\times m}\),
\item symmetric orthogonal projectors \(P_1,\dots,P_m\in\mathbb{R}^{n\times n}\) such that
\begin{equation}
\label{eq:oep_projectors}
\begin{aligned}
P_i^2=P_i,\quad P_i^\top &=P_i,\quad P_iP_j=0\ (i\neq j),\\
\sum_{i=1}^m P_i &= I_n,
\end{aligned}
\end{equation}
\item nonnegative scalars $\bar c_1,\dots,\bar c_m$,
\item scalar maps \(\phi_i:\mathbb{R}^n\to\mathbb{R}\) satisfying
\begin{equation}
\label{eq:oep_phi_contraction}
|\phi_i(z)| \le \|z\|_2 \quad \forall z\in\mathbb{R}^n,
\end{equation}
and, whenever \(P_i\neq 0\),
\begin{equation}
\label{eq:oep_phi_tight}
\sup_{z\in \mathrm{range}(P_i)\setminus\{0\}} \frac{|\phi_i(z)|}{\|z\|_2} = 1,
\end{equation}
\end{enumerate}
such that
\begin{equation}
\label{eq:oep_form}
U^\top f(x) =
\begin{bmatrix}
\bar c_1\,\phi_1(P_1x)\\
\vdots\\
\bar c_m\,\phi_m(P_mx)
\end{bmatrix}
\quad \forall x\in\mathbb{R}^n.
\end{equation}
\end{definition}

\begin{remark}
Definition~\ref{def:oep} means that, after a fixed output rotation \(U\), each output channel of \(f\) depends only on the energy contained in one orthogonal component \(P_i x\) of the input, so that different output coordinates cannot simultaneously draw energy from the same input direction.
\end{remark}

\begin{proposition}[OEP implies \(\kappa(U)=1\) and gives an exact anisotropic cage]
\label{prop:oep_implies_kappa_one}
If \(f\) satisfies Definition~\ref{def:oep}, then for the corresponding \(U\),
the directional gains defined in \eqref{eq:directional_gains} satisfy
\begin{equation}
\label{eq:oep_directional_gains}
\begin{aligned}
c_i(U) &= \bar c_i,
&\quad& \text{for all } i \text{ with } P_i \neq 0,\\
c_i(U) &= 0,
&\quad& \text{for all } i \text{ with } P_i = 0.
\end{aligned}
\end{equation}
Moreover, the aggregation constant satisfies
\begin{equation}
\label{eq:oep_kappa_one}
\kappa(U)=1 \quad \text{whenever } f\not\equiv 0.
\end{equation}
In particular, with \(D_U=\mathrm{diag}(c_1(U),\dots,c_m(U))\),
\begin{equation}
\label{eq:oep_cage_exact}
\big\|D_U^\dagger U^\top f(x)\big\|_2 \le \|x\|_2 \quad \forall x\neq 0,
\end{equation}
so the ``aggregation penalty'' collapses to \(1\) (no \(\sqrt{m}\)-type inflation).
\end{proposition}

\begin{proof}
\emph{Directional gains.}
From \eqref{eq:oep_form}, \(u_i^\top f(x)= \bar c_i\,\phi_i(P_ix)\).
Using \eqref{eq:oep_phi_contraction} and \(\|P_ix\|_2\le \|x\|_2\),
\[
\frac{|u_i^\top f(x)|}{\|x\|_2}
=
\bar c_i\,\frac{|\phi_i(P_ix)|}{\|x\|_2}
\le
\bar c_i\,\frac{\|P_ix\|_2}{\|x\|_2}
\le \bar c_i,
\]
so \(c_i(U)\le \bar c_i\).
If \(P_i=0\), then \(u_i^\top f(x)\equiv 0\) and hence \(c_i(U)=0\).
If \(P_i\neq 0\), take \(x\in \mathrm{range}(P_i)\) so that \(P_ix=x\); then
\begin{equation}
\begin{aligned}
\sup_{x\in \mathrm{range}(P_i)\setminus\{0\}}
\frac{|u_i^\top f(x)|}{\|x\|_2}
&=
\sup_{x\in \mathrm{range}(P_i)\setminus\{0\}}
\frac{|\bar c_i\,\phi_i(P_i x)|}{\|x\|_2} \\
&=
\sup_{x\in \mathrm{range}(P_i)\setminus\{0\}}
\frac{|\bar c_i\,\phi_i(x)|}{\|x\|_2} \\
&=
\bar c_i\,
\sup_{x\in \mathrm{range}(P_i)\setminus\{0\}}
\frac{|\phi_i(x)|}{\|x\|_2} \\
&=
\bar c_i,
\end{aligned}
\end{equation}
where we used $P_i x = x$ on $\mathrm{range}(P_i)$ and \eqref{eq:oep_phi_tight}.
Hence \(c_i(U)= \bar c_i\) for \(P_i\neq 0\), proving \eqref{eq:oep_directional_gains}.

\emph{Aggregation constant.}
Let \(D_U=\mathrm{diag}(c_1(U),\dots,c_m(U))\).
For indices with \(c_i(U)>0\), we have \(c_i(U)=\bar c_i\) and thus
\[
\big(D_U^\dagger U^\top f(x)\big)_i = \frac{\bar{c}_i\,\phi_i(P_ix)}{\bar{ c}_i}=\phi_i(P_ix).
\]
For indices with \(c_i(U)=0\), we have \(u_i^\top f(x)\equiv 0\), so the corresponding entry is \(0\).
Therefore
\[
\|D_U^\dagger U^\top f(x)\|_2^2
=
\sum_{i:\,c_i(U)>0} |\phi_i(P_ix)|^2
\le
\sum_{i=1}^m \|P_ix\|_2^2
=
\|x\|_2^2,
\]
where we used \eqref{eq:oep_phi_contraction} and the orthogonal decomposition property
\(\sum_i \|P_ix\|_2^2 = \|x\|_2^2\) implied by \eqref{eq:oep_projectors}.
This proves \eqref{eq:oep_cage_exact}, hence \(\kappa(U)\le 1\).
If \(f\not\equiv 0\), then by Corollary~\ref{cor:kappa_bounds} we also have \(\kappa(U)\ge 1\),
so \(\kappa(U)=1\), proving \eqref{eq:oep_kappa_one}.
\end{proof}


\begin{corollary}[OEP and minimal aggregation]
\label{cor:kappa_one_svd}
Let \(f:\mathbb{R}^n\to\mathbb{R}^m\) satisfy \(\|f\|_{2\to 2}<\infty\) and \(f(0)=0\).

Assume \(f\) admits an orthogonal energy partition in the sense of Definition~\ref{def:oep}, and let
\(U\in\mathbb{R}^{m\times m}\) be the corresponding orthogonal matrix.
Let \(c_i(U)\) be the directional gains in the \(U\)-coordinates (Definition~\ref{def:aggregation_constant}),
fix any constant \(c_\star>0\), and define \(\widetilde D_U=\mathrm{diag}(\tilde c_1,\dots,\tilde c_m)\succ 0\) by
\[
\tilde c_i \triangleq
\begin{cases}
c_i(U), & c_i(U)>0,\\
c_\star, & c_i(U)=0.
\end{cases}
\]
Then for every \(\gamma>1\), the gain-cage condition of Lemma~\ref{lem:gain_caged_lift} holds with
\(D=\gamma \widetilde D_U\) and \(\beta=1/\gamma<1\).
Consequently, there exists an injective lift \(v:\mathbb{R}^n\to\mathbb{R}^{m+n}\) satisfying \(\|v(x)\|_2=\|x\|_2\) and
\[
f(x)=U\Sigma v(x),\qquad
\Sigma\triangleq \begin{bmatrix}\gamma \widetilde D_U & 0_{m\times n}\end{bmatrix}.
\]
Moreover, since \(\gamma>1\) may be taken arbitrarily close to \(1\), the diagonal gain cage can be made arbitrarily close to the
directional gains \(c_i(U)\) without any \(\sqrt{m}\)-type inflation.
\end{corollary}

\begin{proof}
Assume \(f\) satisfies Definition~\ref{def:oep} with orthogonal matrix \(U\).
By Proposition~\ref{prop:oep_implies_kappa_one}, the exact cage inequality
\[
\big\|D_U^\dagger U^\top f(x)\big\|_2 \le \|x\|_2 \quad \forall x\neq 0
\]
holds (and in particular \(\kappa(U)\le 1\); if \(f\not\equiv 0\) then \(\kappa(U)=1\)).

For any \(x\neq 0\), we claim \(\widetilde D_U^{-1}U^\top f(x)=D_U^\dagger U^\top f(x)\),
where \(D_U=\mathrm{diag}(c_1(U),\dots,c_m(U))\) and \(D_U^\dagger\) is its diagonal pseudoinverse.
Indeed, if \(c_i(U)>0\) then \(\tilde c_i=c_i(U)\) and the \(i\)-th normalized coordinate is
\((u_i^\top f(x))/c_i(U)\).
If \(c_i(U)=0\), then \(u_i^\top f(x)\equiv 0\) (Remark~\ref{rem:zero_directional_gain}),
so the \(i\)-th normalized coordinate is \(0\) regardless of \(\tilde c_i=c_\star>0\).
Thus \(\widetilde D_U^{-1}U^\top f(x)=D_U^\dagger U^\top f(x)\) for all \(x\).

Now fix \(\gamma>1\). Using the preceding display and Proposition~\ref{prop:oep_implies_kappa_one},
for all \(x\neq 0\),
\begin{equation}
\begin{aligned}
\big\|(\gamma \widetilde D_U)^{-1}U^\top f(x)\big\|_2
&=
\frac{1}{\gamma}\,\big\|\widetilde D_U^{-1}U^\top f(x)\big\|_2 \\
&=
\frac{1}{\gamma}\,\big\|D_U^\dagger U^\top f(x)\big\|_2 \\
&\le
\frac{1}{\gamma}\,\|x\|_2.
\end{aligned}
\end{equation}
Hence Lemma~\ref{lem:gain_caged_lift} applies with \(D=\gamma\widetilde D_U\) and \(\beta=1/\gamma<1\),
yielding the stated factorization and injective norm-preserving lift.
Since \(\gamma\downarrow 1\) is permitted, there is no compulsory inflation beyond the directional gains.
\end{proof}

The OEP condition exhausts what is achievable at the level of aggregation in the sense that if an orthogonal energy partition exists, then the aggregation constant is necessarily minimal, \(\kappa(U)=1\). Consequently, no additional structural assumptions can further reduce aggregation in the fixed \(U\)-coordinates.

What remains unconstrained by OEP is the lift itself. In the general nonlinear case, norm preservation is obtained by a support/kernel decomposition: the support component reproduces \(f(x)\), while a kernel component supplies the residual needed to enforce \(|v(x)|_2=|x|_2\). Since the kernel component lies in \(\ker(\Sigma)\), it is invisible at the output and does not affect aggregation.

Linearity restricts this kernel freedom. When the support component is linear and already attains the directional gains, the kernel component cannot encode independent gain geometry. In the linear injective case, it vanishes as \(\gamma\downarrow 1\), and the lift collapses to an orthogonal rotation, recovering the classical SVD geometry.

\begin{corollary}[Linear injective case: Lemma~\ref{lem:gain_caged_lift} recovers the SVD map and the lift collapses to $V^\top$ as $\gamma\downarrow 1$]
\label{cor:linear_injective_lift_recovers_svd}
Let $A\in\mathbb{R}^{m\times n}$ have full column rank ($\mathrm{rank}(A)=n$, hence $m\ge n$), and set $f(x)\triangleq Ax$.
Let $A=U_{\mathrm{svd}}\Sigma_{\mathrm{svd}}V^\top$ be an SVD, where $U_{\mathrm{svd}}\in\mathbb{R}^{m\times m}$ is orthogonal
(thin SVD orthogonally completed if needed), $V\in\mathbb{R}^{n\times n}$ is orthogonal, and
$\Sigma_{\mathrm{svd}}\in\mathbb{R}^{m\times n}$ is rectangular diagonal with singular values
$\sigma_1\ge\cdots\ge\sigma_n>0$.

Fix any $c_\star>0$ and any $\gamma>1$, and define
\begin{equation}
\begin{aligned}
\widetilde D &\triangleq \mathrm{diag}\big(\sigma_1,\dots,\sigma_n,\underbrace{c_\star,\dots,c_\star}_{m-n}\big)\in\mathbb{R}^{m\times m},
\\
D &\triangleq \gamma \widetilde D,
\\
\Sigma &\triangleq \big[\,D\;\;0_{m\times n}\,\big]\in\mathbb{R}^{m\times(m+n)}.
\end{aligned}
\end{equation}
Let $v:\mathbb{R}^n\to\mathbb{R}^{m+n}$ be the lift constructed in Lemma~\ref{lem:gain_caged_lift} with $U=U_{\mathrm{svd}}$ and this $\Sigma$.

Then:

\smallskip
\noindent\textbf{(i) Exact output-side identity.}
For all $x\in\mathbb{R}^n$,
\[
\Sigma\,v(x)=\Sigma_{\mathrm{svd}}V^\top x,
\]
and hence
\[
U_{\mathrm{svd}}\Sigma\,v(x)=U_{\mathrm{svd}}\Sigma_{\mathrm{svd}}V^\top x=Ax.
\]

\smallskip
\noindent\textbf{(ii) Explicit form and $\gamma\downarrow 1$ limit.}
The lift is linear and equals
\[
v(x)=
\begin{bmatrix}
\frac{1}{\gamma}\begin{bmatrix}V^\top x\\0_{m-n}\end{bmatrix}\\[1mm]
\sqrt{1-\frac{1}{\gamma^2}}\,x
\end{bmatrix}\in\mathbb{R}^{m+n},
\]
so as $\gamma\downarrow 1$ (the relevant limit since Lemma~\ref{lem:gain_caged_lift} requires $\gamma>1$) we have
\[
v(x)\to
\begin{bmatrix}
\begin{bmatrix}V^\top x\\0_{m-n}\end{bmatrix}\\[1mm]
0_n
\end{bmatrix},
\]
i.e., the kernel component vanishes and the lift reduces to the SVD right rotation (embedded in $\mathbb{R}^{m+n}$).
\end{corollary}

\begin{proof}
By Lemma~\ref{lem:gain_caged_lift}, the lift is constructed via the support/kernel split
\[
v(x)=v_{\mathrm{support}}(x)+v_{\mathrm{kernel}}(x),
\qquad
v_{\mathrm{support}}(x)=\Sigma^\dagger U_{\mathrm{svd}}^\top Ax,
\]
with $\Sigma^\dagger=\big[\;D^{-1}\;\;0_{n\times m}\;\big]^\top$ and $v_{\mathrm{kernel}}(x)=\big[\,0_m\;\;\alpha(x)x\,\big]^\top$,
where
\[
\alpha(x)=\sqrt{1-\frac{\|v_{\mathrm{support}}(x)\|_2^2}{\|x\|_2^2}}
\qquad(x\neq 0).
\]

Since $A=U_{\mathrm{svd}}\Sigma_{\mathrm{svd}}V^\top$, we have
\[
U_{\mathrm{svd}}^\top A=\Sigma_{\mathrm{svd}}V^\top.
\]
Moreover, for full column rank ($m\ge n$),
$\Sigma_{\mathrm{svd}}V^\top x=\big[\;\mathrm{diag}(\sigma_1,\dots,\sigma_n)V^\top x\;\;0_{m-n}\big]^\top$.
Thus
\begin{align*}
v_{\mathrm{support}}(x)
&=
\begin{bmatrix}
D^{-1}U_{\mathrm{svd}}^\top Ax\\[0.5mm]
0_n
\end{bmatrix} \\
&=
\begin{bmatrix}
D^{-1}\Sigma_{\mathrm{svd}}V^\top x\\[0.5mm]
0_n
\end{bmatrix} \\
&=
\begin{bmatrix}
\frac{1}{\gamma}\begin{bmatrix}V^\top x\\0_{m-n}\end{bmatrix}\\[0.5mm]
0_n
\end{bmatrix},
\end{align*}
where the entries involving $c_\star$ multiply zeros and hence do not affect the expression.

Because $V$ is orthogonal,
$\|v_{\mathrm{support}}(x)\|_2=\frac{1}{\gamma}\|V^\top x\|_2=\frac{1}{\gamma}\|x\|_2$, and therefore
\[
\alpha(x)=\sqrt{1-\frac{1}{\gamma^2}}
\quad\text{(constant in $x$)}.
\]
Substituting into $v_{\mathrm{kernel}}(x)$ yields the explicit formula in part (ii).

For the exact identity in part (i), note that $\Sigma v_{\mathrm{kernel}}(x)=0$ by construction (the last $n$ columns of $\Sigma$ are zero), hence
\[
\Sigma v(x)=\Sigma v_{\mathrm{support}}(x)=\Sigma\Sigma^\dagger U_{\mathrm{svd}}^\top Ax
=U_{\mathrm{svd}}^\top Ax=\Sigma_{\mathrm{svd}}V^\top x,
\]
and multiplying by $U_{\mathrm{svd}}$ gives $U_{\mathrm{svd}}\Sigma v(x)=Ax$.
Finally, as $\gamma\downarrow 1$, $\frac{1}{\gamma}\to 1$ and $\sqrt{1-\frac{1}{\gamma^2}}\to 0$, proving the stated limit.
\end{proof}

\begin{corollary}[Linear non-injective case: row-space agreement but nullspace information is stored in the kernel block]
\label{cor:linear_rank_deficient_lift_behavior}
Let $A\in\mathbb{R}^{m\times n}$ have rank $r<n$, and set $f(x)\triangleq Ax$.
Let $A=U_{\mathrm{svd}}\Sigma_{\mathrm{svd}}V^\top$ be an SVD with singular values
$\sigma_1\ge\cdots\ge\sigma_r>0$ and $\sigma_{r+1}=\cdots=0$.
Write $V=[V_r\;\;V_0]$ with $V_r\in\mathbb{R}^{n\times r}$ (row-space basis) and $V_0\in\mathbb{R}^{n\times(n-r)}$ (nullspace basis).

Fix any $c_\star>0$ and $\gamma>1$, define
\begin{equation}
\begin{aligned}
\widetilde D &\triangleq \mathrm{diag}\big(\sigma_1,\dots,\sigma_r,\underbrace{c_\star,\dots,c_\star}_{m-r}\big),
\\
D&\triangleq \gamma \widetilde D,
\\
\Sigma &\triangleq \big[\,D\;\;0_{m\times n}\,\big],
\end{aligned}
\end{equation}
and let $v:\mathbb{R}^n\to\mathbb{R}^{m+n}$ be the Lemma~\ref{lem:gain_caged_lift} lift with $U=U_{\mathrm{svd}}$ and this $\Sigma$.
Then for all $x\in\mathbb{R}^n$ we still have the exact identity
\[
\Sigma\,v(x)=\Sigma_{\mathrm{svd}}V^\top x,
\]
but the lift behaves as follows:
\begin{equation}
\begin{aligned}
v_{\mathrm{support}}(x) &=
\begin{bmatrix}
\frac{1}{\gamma}\begin{bmatrix}V_r^\top x\\0_{m-r}\end{bmatrix}\\[0.5mm]
0_n
\end{bmatrix},
\\
\|v_{\mathrm{support}}(x)\|_2 &=\frac{1}{\gamma}\|V_r^\top x\|_2,
\\
\alpha(x) &= \sqrt{1-\frac{1}{\gamma^2}\frac{\|V_r^\top x\|_2^2}{\|x\|_2^2}}.
\end{aligned}
\end{equation}
In particular, if $x\in\ker(A)\setminus\{0\}$ (equivalently $V_r^\top x=0$), then $v_{\mathrm{support}}(x)=0$ and $v(x)=\big[\,0_m\;\;x\,\big]^\top$.
Consequently, as $\gamma\downarrow 1$ the kernel block generally \emph{does not} vanish (it vanishes iff $x$ lies entirely in the row space).
\end{corollary}

\begin{proof}
As in the proof of Corollary~\ref{cor:linear_injective_lift_recovers_svd}, Lemma~\ref{lem:gain_caged_lift} gives
$v_{\mathrm{support}}(x)=\big[\,D^{-1}U_{\mathrm{svd}}^\top Ax\;\;0_n\,\big]^\top$ and $\Sigma v(x)=U_{\mathrm{svd}}^\top Ax$.

Using $U_{\mathrm{svd}}^\top A=\Sigma_{\mathrm{svd}}V^\top$ and the SVD structure,
$\Sigma_{\mathrm{svd}}V^\top x=\big[\;\mathrm{diag}(\sigma_1,\dots,\sigma_r)V_r^\top x\;\;0_{m-r}\big]^\top$,
so multiplying by $D^{-1}=(1/\gamma)\,\mathrm{diag}(1/\sigma_1,\dots,1/\sigma_r,1/c_\star,\dots)$ yields
\[
D^{-1}\Sigma_{\mathrm{svd}}V^\top x=\frac{1}{\gamma}\begin{bmatrix}V_r^\top x\\0_{m-r}\end{bmatrix},
\]
which gives the stated expression for $v_{\mathrm{support}}(x)$ and its norm.
The formula for $\alpha(x)$ follows immediately from its definition
$\alpha(x)=\sqrt{1-\|v_{\mathrm{support}}(x)\|_2^2/\|x\|_2^2}$.
If $x\in\ker(A)\setminus\{0\}$ then $V_r^\top x=0$, so $v_{\mathrm{support}}(x)=0$ and $\alpha(x)=1$, hence $v(x)=[0_m;\,x]^\top$.
Finally, since $\|V_r^\top x\|_2<\|x\|_2$ whenever $x$ has a nullspace component, $\alpha(x)$ does not generally converge to $0$ as $\gamma\downarrow 1$.
\end{proof}

\smallskip
Corollaries~3--4 close the loop with linear theory: when $f(x)=Ax$ and $U$ is chosen as the SVD left basis, the directional gains coincide with the singular values, and the Lemma~1 lift reproduces the SVD output-side map exactly (with the lift collapsing to $V^\top$ in the injective case and necessarily retaining a nontrivial kernel block in the rank-deficient case).
In other words, for linear maps the SVD provides a \emph{canonical} choice of output coordinates that simultaneously (i) orders the directional gains and (ii) yields a sharp diagonal cage.

For a general nonlinear $f$, there is no a priori analogue of the SVD to indicate \emph{which} orthogonal output rotation $U$ should be used in Definitions~2--3. This is because both the directional gains $c_i(U)$ and the aggregation constant $\kappa(U)$ depend on this choice, and different rotations can produce very different anisotropic portraits.
Thus, before applying the gain-caging lemma as a reusable tool, we need a principled way to select an orthogonal output basis directly from $f$---one that plays the same organizational role that $U_{\mathrm{svd}}$ plays in the linear case by exposing, and ordering, the most amplified output directions.
The next corollary provides a stagewise extremal construction of orthogonal directions that induces an ordered set of directional gains and therefore a canonical diagonal cage (with $\gamma>\kappa(U)$ supplying the only slack needed by Lemma~1).

\begin{corollary}[Extremal-direction orthogonal coordinates and an anisotropic gain cage]
\label{cor:extremal_direction_coordinates}
Let \(f:\mathbb{R}^n\to\mathbb{R}^m\) satisfy \(\|f\|_{2\to 2}<\infty\) and \(f(0)=0\).

\smallskip
\noindent\textbf{Stagewise extremal values and orthogonal directions.}
Let \(\mathcal{U}_1\triangleq \{u\in\mathbb{R}^m:\|u\|_2=1\}\).
Define
\begin{equation}
\label{eq:extremal_u1_new}
\begin{aligned}
L_1 &\triangleq \max_{u\in\mathcal{U}_1} \big\|u^\top f\big\|_{2\to 2},\\
u_1 &\in \arg\max_{u\in\mathcal{U}_1} \big\|u^\top f\big\|_{2\to 2}.
\end{aligned}
\end{equation}
Recursively, for \(k=2,\dots,m\), define
\[
\mathcal{U}_k \triangleq \{u\in\mathcal{U}_1:\; u\perp \mathrm{span}\{u_1,\dots,u_{k-1}\}\},
\]
and define
\begin{equation}
\label{eq:extremal_uk_new}
\begin{aligned}
L_k &\triangleq \max_{u\in\mathcal{U}_k} \big\|u^\top f\big\|_{2\to 2},\\
u_k &\in \arg\max_{u\in\mathcal{U}_k} \big\|u^\top f\big\|_{2\to 2},
\qquad k=2,\dots,m.
\end{aligned}
\end{equation}
Set \(U\triangleq [u_1\ \cdots\ u_m]\in\mathbb{R}^{m\times m}\).
(If the argmax sets are not singletons, any choice of maximizers defines a valid \(U\); the conclusions below hold for any such choice.)

\smallskip
\noindent\textbf{Connection to global directional gains.}
Let \(c_i(U)\) and \(\kappa(U)\) be as in Definition~\ref{def:aggregation_constant}.
Then, for the above \(U\),
\begin{equation}
\label{eq:ci_equals_Li_new}
c_i(U)=\big\|u_i^\top f\big\|_{2\to 2}=L_i,\qquad i=1,\dots,m.
\end{equation}
In particular, the extremal values are ordered
\begin{equation}
\begin{aligned}
L_1\ge L_2\ge &\cdots \ge L_m
\qquad\text{and hence} \\
c_1(U)\ge c_2(U)\ge &\cdots \ge c_m(U),
\end{aligned}
\end{equation}
and the top value recovers the induced norm:
\begin{equation}
\label{eq:c1_equals_induced_norm}
c_1(U)=L_1=\|f\|_{2\to 2}.
\end{equation}

\smallskip
\noindent\textbf{Anisotropic cage and factorization.}
Fix any constant \(c_\star>0\) and define the \emph{strictly positive} diagonal matrix
\(\widetilde D_U=\mathrm{diag}(\tilde c_1,\dots,\tilde c_m)\succ 0\) by
\begin{equation}
\label{eq:Dtilde_def}
\tilde c_i \triangleq
\begin{cases}
c_i(U), & c_i(U)>0,\\
c_\star, & c_i(U)=0.
\end{cases}
\end{equation}
Fix any \(\gamma>\kappa(U)\) and define
\begin{equation}
\label{eq:Sigma_gamma_Dtilde}
\Sigma \triangleq \begin{bmatrix}\gamma \widetilde D_U & 0_{m\times n}\end{bmatrix}\in\mathbb{R}^{m\times (m+n)}.
\end{equation}
Then the gain-cage condition of Lemma~\ref{lem:gain_caged_lift} holds with \((U,D)=(U,\gamma \widetilde D_U)\) and
\(\beta=\kappa(U)/\gamma<1\). Consequently, there exists an injective lift
\(v:\mathbb{R}^n\to\mathbb{R}^{m+n}\) point-wise satisfying \(\|v(x)\|_2=\|x\|_2\) for all \(x\) such that
\[
f(x)=U\Sigma v(x)\quad \forall x\in\mathbb{R}^n.
\]
\end{corollary}

\begin{proof}
\emph{Existence of maximizers.}
We first show that the map \(u\mapsto \|u^\top f\|_{2\to 2}\) is Lipschitz on the unit sphere.
Let \(u,v\in\mathbb{R}^m\) satisfy \(\|u\|_2=\|v\|_2=1\). Then, using the reverse triangle inequality and submultiplicativity,
\begin{equation}
\begin{aligned}
\Big|\big\|u^\top f\big\|_{2\to 2}-\big\|v^\top f\big\|_{2\to 2}\Big|
&\le
\big\|(u-v)^\top f\big\|_{2\to 2} \\
&=
\sup_{x\neq 0}\frac{|(u-v)^\top f(x)|}{\|x\|_2} \\
&\le
\|u-v\|_2\,\sup_{x\neq 0}\frac{\|f(x)\|_2}{\|x\|_2} \\
&=
\|u-v\|_2\,\|f\|_{2\to 2}.
\end{aligned}
\end{equation}
Hence \(u\mapsto \|u^\top f\|_{2\to 2}\) is continuous on the unit sphere.
Each feasible set \(\mathcal{U}_k\) is a closed subset of the unit sphere (thus compact), so the maxima in
\eqref{eq:extremal_u1_new}--\eqref{eq:extremal_uk_new} are attained.

\emph{Ordering of the stagewise extrema.}
By construction, \(\mathcal{U}_{k}\subseteq \mathcal{U}_{k-1}\) for \(k\ge 2\), hence
\(L_k=\max_{u\in\mathcal{U}_k} \|u^\top f\|_{2\to 2}\le \max_{u\in\mathcal{U}_{k-1}} \|u^\top f\|_{2\to 2}=L_{k-1}\).
Thus \(L_1\ge \cdots \ge L_m\).

\emph{Identity \(c_i(U)=L_i\).}
By Definition~\ref{def:aggregation_constant},
\(c_i(U)=\|u_i^\top f\|_{2\to 2}\).
By the construction \eqref{eq:extremal_u1_new}--\eqref{eq:extremal_uk_new}, we also have \(\|u_i^\top f\|_{2\to 2}=L_i\).
This proves \eqref{eq:ci_equals_Li_new}, and the ordering of the \(c_i(U)\) follows from that of the \(L_i\).

\emph{Top value equals the induced norm.}
For any fixed \(x\neq 0\),
\[
\|f(x)\|_2 = \sup_{\|u\|_2=1} u^\top f(x),
\]
and hence
\begin{equation}
\begin{aligned}
\|f\|_{2\to 2}
&=
\sup_{x\neq 0}\frac{\|f(x)\|_2}{\|x\|_2} \\
&=
\sup_{x\neq 0}\sup_{\|u\|_2=1}\frac{|u^\top f(x)|}{\|x\|_2} \\
&=
\sup_{\|u\|_2=1}\sup_{x\neq 0}\frac{|u^\top f(x)|}{\|x\|_2}.
\end{aligned}
\end{equation}
For any fixed unit vector \(u\), we have
\[
\sup_{x\neq 0}\frac{|u^\top f(x)|}{\|x\|_2}
=
\big\|u^\top f\big\|_{2\to 2},
\]
so the preceding display becomes
\[
\|f\|_{2\to 2}
=
\sup_{\|u\|_2=1}\big\|u^\top f\big\|_{2\to 2}.
\]
By definition, \(L_1=\max_{\|u\|_2=1}\|u^\top f\|_{2\to 2}=\sup_{\|u\|_2=1}\|u^\top f\|_{2\to 2}\), so \(L_1=\|f\|_{2\to 2}\).
Therefore \(c_1(U)=L_1=\|f\|_{2\to 2}\), proving \eqref{eq:c1_equals_induced_norm}.

\emph{Gain-cage inequality.}
For any \(x\neq 0\),
\[
\big\|(\gamma \widetilde D_U)^{-1}U^\top f(x)\big\|_2
=
\frac{1}{\gamma}\,\big\|\widetilde D_U^{-1}U^\top f(x)\big\|_2.
\]
If \(c_i(U)>0\), then \(\tilde c_i=c_i(U)\) and the \(i\)-th normalized coordinate is \((u_i^\top f(x))/c_i(U)\).
If \(c_i(U)=0\), then \(u_i^\top f(x)\equiv 0\), so the \(i\)-th normalized coordinate is \(0\) regardless of \(\tilde c_i=c_\star>0\).
Thus, for all \(x\),
\[
\widetilde D_U^{-1}U^\top f(x)=D_U^\dagger U^\top f(x),
\]
and therefore, by Definition~\ref{def:aggregation_constant},
\[
\big\|(\gamma \widetilde D_U)^{-1}U^\top f(x)\big\|_2
=
\frac{1}{\gamma}\,\big\|D_U^\dagger U^\top f(x)\big\|_2
\le
\frac{1}{\gamma}\,\kappa(U)\,\|x\|_2.
\]
Since \(\gamma>\kappa(U)\), letting \(\beta\triangleq \kappa(U)/\gamma\in[0,1)\) yields
\[
\big\|(\gamma \widetilde D_U)^{-1}U^\top f(x)\big\|_2 \le \beta\,\|x\|_2\quad \forall x\neq 0.
\]
Thus the hypotheses of Lemma~\ref{lem:gain_caged_lift} hold with \(D=\gamma \widetilde D_U\), and the stated factorization follows.
\end{proof}

\begin{remark}[Choosing \(c_\star\)]
The constant \(c_\star>0\) only appears in coordinates \(i\) for which \(c_i(U)=0\), i.e., for which \(u_i^\top f(x)\equiv 0\).
Hence \(c_\star\) does not affect the gain-cage inequality or the factorization.
For maximum interpretability, one may take \(c_\star\) to be arbitrarily small (but strictly positive) to reflect that these directions carry no output energy.
In computational settings, \(c_\star\) may be inflated to avoid poor conditioning of $D_U$.
\end{remark}

\medskip
\noindent\emph{Control-facing extension: norm preservation in the \(u\) argument.}
Before applying the preceding results to control systems, we need a two-argument analogue in which the lift is pointwise norm-preserving in the \emph{control} variable \(u\).
In what follows, we apply the gain-caging lemma to the control-dependent contribution \(f_u(x,u)\) (which satisfies \(f_u(x,0)=0\)).

\begin{lemma}[Norm preservation in the control argument]
\label{lem:norm_preserve_u}
Let \(g:\mathbb{R}^n\times\mathbb{R}^p\to\mathbb{R}^m\) satisfy \(g(x,0)=0\) for all \(x\) and
\begin{equation}
\label{eq:finite_u_gain}
\|g\|_{u} \triangleq
\sup_{x\in\mathbb{R}^n,\,u\in\mathbb{R}^p\setminus\{0\}}
\frac{\|g(x,u)\|_{2}}{\|u\|_{2}} < \infty.
\end{equation}
Set \(l\triangleq p+m\).
Then there exist an orthogonal matrix \(U\in\mathbb{R}^{m\times m}\), a diagonal \(\Sigma=\begin{bmatrix}D & 0_{m\times p}\end{bmatrix}\in\mathbb{R}^{m\times l}\) with \(D\succ 0\), and a mapping \(v:\mathbb{R}^n\times\mathbb{R}^p\to\mathbb{R}^l\) such that
\begin{equation}
\label{eq:norm_preserve_u}
\|v(x,u)\|_2=\|u\|_2 \quad \forall (x,u),
\end{equation}
and
\begin{equation}
\label{eq:factorization_u}
g(x,u)=U\Sigma v(x,u)\quad \forall (x,u).
\end{equation}
\end{lemma}

\begin{proof}

\emph{Construction.}
Fix any orthogonal matrix $U\in\mathbb{R}^{m\times m}$.
For proof of existence, any orthogonal $U$ will do, though for many applications a U defined as in Corollary \ref{cor:extremal_direction_coordinates} would be maximally useful.
Define the \emph{directional induced gains in the $u$-argument} (at fixed output coordinates $U$) by
\[
c_i^{(u)}(U)\;\triangleq\;
\sup_{x\in\mathbb{R}^n,\;u\in\mathbb{R}^p\setminus\{0\}}
\frac{\big|e_i^\top U^\top g(x,u)\big|}{\|u\|_2},
\qquad i=1,\dots,m,
\]
and define a strictly positive diagonal cage
\begin{equation}
\begin{aligned}
\widetilde D_U \;&\triangleq\; \mathrm{diag}(\widetilde c_1,\dots,\widetilde c_m)\succ 0,
\\
\widetilde c_i \;&\triangleq\;
\begin{cases}
c_i^{(u)}(U), & c_i^{(u)}(U)>0,\\
c_\star, & c_i^{(u)}(U)=0,
\end{cases}
\end{aligned}
\end{equation}
for any fixed constant $c_\star>0$.
Now define the corresponding aggregation constant
\[
\kappa_u(U)\;\triangleq\;
\sup_{x\in\mathbb{R}^n,\;u\neq 0}
\frac{\|\widetilde D_U^{-1}U^\top g(x,u)\|_2}{\|u\|_2}
\;<\;\infty,
\]
and fix any $\gamma>\kappa_u(U)$.
Set
\[
D \;\triangleq\; \gamma\,\widetilde D_U \;\succ\; 0,
\qquad
\Sigma \;\triangleq\;
\begin{bmatrix}D & 0_{m\times p}\end{bmatrix}\in\mathbb{R}^{m\times(m+p)}.
\]
Since $\Sigma=[D\ \ 0]$ with $D\succ 0$, its Moore--Penrose pseudoinverse is
\[
\Sigma^\dagger=\begin{bmatrix}D^{-1}\\[2pt]0_{p\times m}\end{bmatrix},
\qquad\text{and hence}\qquad
\Sigma\Sigma^\dagger=I_m.
\]

Define the support component
\begin{equation}
\begin{aligned}
v_{\mathrm{support}}(x,u)&\triangleq \Sigma^\dagger U^\top g(x,u) \\
&=
\begin{bmatrix}
D^{-1}U^\top g(x,u)\\
0_p
\end{bmatrix}\in\mathbb{R}^{m+p}.
\end{aligned}
\end{equation}

For $u\neq 0$, define
\begin{equation}
\begin{aligned}
\alpha(x,u) &\triangleq \sqrt{1-\frac{\|v_{\mathrm{support}}(x,u)\|_2^2}{\|u\|_2^2}},
\\
v_{\mathrm{kernel}}(x,u)&\triangleq
\begin{bmatrix}
0_m\\
\alpha(x,u)\,u
\end{bmatrix}\in\mathbb{R}^{m+p}.
\end{aligned}
\end{equation}
Finally set $v(x,u)\triangleq v_{\mathrm{support}}(x,u)+v_{\mathrm{kernel}}(x,u)$ for $u\neq 0$, and define $v(x,0)\triangleq 0$.

\emph{Real-valuedness (radicand positivity).}
For any $x$ and $u\neq 0$,
\begin{equation}
\begin{aligned}
\|v_{\mathrm{support}}(x,u)\|_2
&=\|D^{-1}U^\top g(x,u)\|_2 \\
&=\frac{1}{\gamma}\,\|\widetilde D_U^{-1}U^\top g(x,u)\|_2 \\
&\le \frac{\kappa_u(U)}{\gamma}\,\|u\|_2.
\end{aligned}
\end{equation}
Let $\beta\triangleq \kappa_u(U)/\gamma\in[0,1)$.
Then
\[
1-\frac{\|v_{\mathrm{support}}(x,u)\|_2^2}{\|u\|_2^2}\;\ge\;1-\beta^2\;>\;0,
\]
so $\alpha(x,u)$ is well-defined and real-valued.

\emph{Norm preservation in the \(u\) argument.}
The vectors \(v_{\mathrm{support}}(x,u)\) and \(v_{\mathrm{kernel}}(x,u)\) have disjoint support (first \(m\) versus last \(p\) coordinates) and are therefore orthogonal.
Hence for \(u\neq 0\),
\begin{equation}
\begin{aligned}
\|v(x,u)\|_2^2
&=
\|v_{\mathrm{support}}(x,u)\|_2^2+\|v_{\mathrm{kernel}}(x,u)\|_2^2 \\
&=
\|v_{\mathrm{support}}(x,u)\|_2^2+\alpha(x,u)^2\|u\|_2^2 \\
&=
\|u\|_2^2,
\end{aligned}
\end{equation}
by the definition of \(\alpha(x,u)\). Also \(\|v(x,0)\|_2=0=\|0\|_2\).

\emph{Reconstruction.}
For any \(x\) and \(u\),
\begin{align}
\Sigma v_{\mathrm{support}}(x,u)
&=
\Sigma\Sigma^\dagger U^\top g(x,u)
=
U^\top g(x,u), \\
\Sigma v_{\mathrm{kernel}}(x,u)&=0
\end{align}
(the latter since the last \(p\) columns of \(\Sigma\) are zero).
Thus \(\Sigma v(x,u)=U^\top g(x,u)\), and multiplying by \(U\) yields
\[
U\Sigma v(x,u)=g(x,u)
\quad \forall (x,u).
\]
At \(u=0\), this holds because \(g(x,0)=0\) and \(v(x,0)=0\).
\end{proof}
\begin{remark}[Injectivity in the control argument]
Since \(\beta<1\) implies \(\alpha(x,u)>0\) for all \(u\neq 0\), the last \(p\) coordinates of \(v(x,u)\) equal a strictly positive scaling of \(u\), so \(u\) can be uniquely recovered from \(v(x,u)\); hence the lift is injective in the control argument.
\end{remark}

\begin{remark}[Implementability]
Norm-preserving lifts can be implemented by composing any learned vector map with a final renormalization step that rescales the output to match the input norm (with an $\varepsilon$-safeguard at the origin). This enforces pointwise norm preservation by construction while remaining compatible with backpropagation.
\end{remark}

\begin{quote}
\noindent\textbf{Key point.}
Any control-dependent contribution \(g(x,u)\) with finite induced gain in \(u\) can be written as a \emph{constant} matrix
\(B \triangleq U\Sigma\) times an instantaneous lifted input \(v(x,u)\) satisfying \(\|v(x,u)\|_{2}=\|u\|_{2}\) pointwise.
This is the mechanism that later restores metric fidelity for Hankel/BT-based certificates.
\end{quote}

\subsection{Main Results Part 1: Nonlinear Gramians under Strong Assumptions}

In this subsection we translate the GSVD-based input calibration from Section~III.A into a certified reduction bound for a class of nonlinear control systems.
The core objective is to construct an LTI surrogate whose Hankel singular values (HSVs) yield a valid $H_\infty$ truncation certificate in the \emph{physical} input metric.

Throughout Part~1 we assume the autonomous dynamics admit an \emph{exact}, finite-dimensional Koopman generator:
$\dot{\varphi}_0(x)=A\varphi(x)$ for some finite $q$.
This assumption is intentionally idealized; Part~2 relaxes it by introducing an explicit closure residual.

In the following we (i) define the idealized class $\mathcal{J}$, (ii) show that every $G\in\mathcal{J}$ admits an LTI-like lifted representation with a \emph{pointwise input-energy calibrated} lifted input $v(x,u)$ (Theorem~\ref{theorem:norm_preserving}), (iii) define the associated LTI system $G^L$ that is linear in the calibrated input channel and invoke balanced truncation on $G^L$, and (iv) derive a non-feedback induced $H_\infty$ reduction bound by decomposing the total error into a calibration-controlled input-mismatch term plus the classical balanced truncation term for $G^L$, yielding the final certificate (Theorem~\ref{theorem:final_bound}).

\begin{definition}[Nonlinear control systems with induced-norm regularity and exact finite-dimensional Koopman closure]
\label{def:def_of_j}
Let $\mathcal{J}$ be the set of all systems $G$ with state-space representation
\begin{align}
\dot{x} &= f(x,u),\\
y &= h(x),
\end{align}
such that the autonomous dynamics $\dot{x}=f(x,0)$ admit an exact finite-dimensional Koopman representation in a state-inclusive lifting $\varphi$.
Fix a forward-invariant compact set $\mathcal X\subset\mathbb R^n$ containing $0$ such that all trajectories of interest remain in $\mathcal X$.
Then, assume there exist $\varphi:\mathbb{R}^n\to\mathbb{R}^q$ (with finite $q$) and a constant matrix $A\in\mathbb{R}^{q\times q}$ such that
\begin{align}
D\varphi(x)\,f(x,0)=A\varphi(x), \qquad \forall x\in\mathcal X.
\end{align}
Equivalently, for any initial condition $x(0)\in\mathcal X$, the identity holds
along the corresponding autonomous trajectory that remains in $\mathcal X$.

The tuple $(f,h,\varphi,A)$ is required to satisfy:
\begin{itemize}
    
    \item \textbf{Baseline dynamics (regularity, equilibrium, and stability of the lifted generator).}
    \begin{itemize}
        \item $x=0$ is an asymptotically stable hyperbolic equilibrium of $\dot x=f(x,0)$, and we fix a forward-invariant compact set $\mathcal X$ containing $0$ such that all trajectories of interest remain in $\mathcal X$,
        \item $f$ is globally Lipschitz in the input $u$, uniformly over $x\in\mathcal X$, i.e., there exists $L_u<\infty$ such that
        \begin{equation}
        \begin{aligned}
        \|f(x,u_1)-f(x,u_2)\|_2
        &\le L_u \|u_1-u_2\|_2, \\
        &\qquad \forall x \in \mathcal X,\ 
        \forall u_1,u_2 \in \mathbb R^p .
        \end{aligned}
        \end{equation}
        and $f$ is (at least) locally Lipschitz in $x$ on $\mathcal X$.
        \item $f(0,0)=0$,
        \item the Koopman generator $A$ is Hurwitz.
    \end{itemize}
    \item \textbf{Koopman lifting (finite-dimensional, state-inclusive, smooth).}
    \begin{itemize}
        \item $\varphi:\mathbb{R}^n\to\mathbb{R}^q$ is finite dimensional ($q<\infty$) and continuously differentiable on $\mathcal X$ (in particular, $D\varphi(x)$ exists for all $x\in\mathcal X$),
        \item $\varphi$ is state-inclusive: $\varphi(x)=\begin{bmatrix}x\\ \varphi_{\mathrm{lift}}(x)\end{bmatrix}$ for some $\varphi_{\mathrm{lift}}:\mathbb{R}^n\to\mathbb{R}^{q-n}$,
        \item $\varphi(0)=0$,
        \item fix an invariant compact set $\mathcal X$ of interest and define
$M_{\varphi}\triangleq \sup_{x\in\mathcal X}\|D\varphi(x)\|_2 < \infty$.
        \item the nontrivial lifted coordinates satisfy $D\varphi_{\mathrm{lift}}(0)=0_{(q-n)\times n}$,
    \end{itemize}

    \item \textbf{Output map (compatible with the lift).}
    \begin{itemize}
        \item $h$ lies in the span of $\varphi$.
    \end{itemize}
\end{itemize}

\noindent As a convention, we refer to the dimensions as:
\begin{itemize}
\item $x \in \mathbb{R}^n$,
\item $u \in \mathbb{R}^p$,
\item $\varphi(x) \in \mathbb{R}^q$,
\item $y \in \mathbb{R}^m$.
\end{itemize}
\end{definition}

\begin{remark}
Definition~\ref{def:def_of_j} is stated at the level of structural properties rather than a specific parameterization.
The assumptions on $\varphi$ are compatible with standard data-driven constructions; for example, $\varphi$ may be realized by a neural network whose final layer is linear and whose preceding activations are normalized to preserve the input $2$-norm, ensuring $\varphi(0)=0$.
Importantly, no linearity or affine structure is assumed in the state evolution itself: the Koopman closure assumption applies only to the autonomous dynamics in the lifted coordinates.
\end{remark}

We first show that, under the exact closure assumption in Definition~\ref{def:def_of_j}, the control influence can be written in an \emph{affine-like} lifted form with a pointwise input-energy calibrated lifted input.

\begin{theorem}[Pointwise norm-preserving, affine-like control inputs in non-affine systems]
\label{theorem:norm_preserving}
For every system $G\in\mathcal{J}$, there exist non-unique matrices
$A\in\mathbb{R}^{q\times q}$ (Hurwitz), $B\in\mathbb{R}^{q\times(p+q)}$, and
$C\in\mathbb{R}^{m\times q}$, and a mapping
$v:\mathcal X\times\mathbb{R}^p\to\mathbb{R}^{p+q}$ satisfying
\begin{align}
\|v(x,u)\|_2=\|u\|_2,\quad \forall x\in \mathcal{X},\ u\in\mathbb{R}^p,
\end{align}
such that $G$ admits the lifted representation
\begin{equation}
\begin{aligned}
\label{eq:lifted_affine_like}
D\varphi(x)\,f(x,u) &= A\varphi(x)+Bv(x,u),\\
y &= C\varphi(x).
\end{aligned}
\end{equation}
\end{theorem}

\begin{proof}
We (i) split $f$ into autonomous and control-induced parts,
(ii) isolate the corresponding control-induced term in $\varphi$-coordinates,
and (iii) apply Lemma~\ref{lem:norm_preserve_u} to obtain a \emph{constant} input matrix $B$
and a \emph{pointwise $u$-norm-preserving} lifted input $v(x,u)$.

\medskip
\noindent\emph{Step 1: Split the dynamics.}
Define
\begin{equation}
\label{eq:fu_def}
f_0(x)\triangleq f(x,0),
\qquad
f_u(x,u)\triangleq f(x,u)-f_0(x),
\end{equation}
so that $f(x,u)=f_0(x)+f_u(x,u)$.

\medskip
\noindent\emph{Step 2: Lift the autonomous dynamics and output.}
By Definition~\ref{def:def_of_j}, the autonomous dynamics close exactly:
\begin{equation}
\label{eq:autonomous_closure}
D\varphi(x)\,f_0(x)=A\varphi(x).
\end{equation}
Moreover, since $h$ lies in the span of $\varphi$, there exists $C\in\mathbb{R}^{m\times q}$ such that
\begin{equation}
\label{eq:output_in_lift}
y=C\varphi(x).
\end{equation}

\medskip
\noindent\emph{Step 3: Identify the lifted control-induced contribution.}
Define the lifted control-induced term
\begin{equation}
\label{eq:lifted_control_term_def}
g(x,u)\triangleq D\varphi(x)\,f_u(x,u)
= D\varphi(x)\big(f(x,u)-f(x,0)\big).
\end{equation}
Then, using $f=f_0+f_u$ and \eqref{eq:autonomous_closure},
\begin{equation}
\begin{aligned}
\label{eq:lifted_split_no_bars}
D\varphi(x)\,f(x,u)
&= D\varphi(x)\,f_0(x) + D\varphi(x)\,f_u(x,u) \\
&= A\varphi(x) + g(x,u).
\end{aligned}
\end{equation}
(Equivalently, along trajectories $\dot x=f(x,u)$, the chain rule gives
$\frac{d}{dt}\varphi(x(t)) = D\varphi(x)\,f(x,u)$.)

\medskip
\noindent\emph{Step 4: Verify finite induced gain in $u$ and apply Lemma~\ref{lem:norm_preserve_u}.}
By Definition~\ref{def:def_of_j}, $f$ is globally Lipschitz in $u$ uniformly over $x\in\mathcal X$, hence
$\|f_u(x,u)\|_2=\|f(x,u)-f(x,0)\|_2 \le L_u\|u\|_2$ for all $x\in\mathcal X$ and $u\in\mathbb R^p$.
Also $D\varphi$ is bounded on $\mathcal X$; let
$M_\varphi \triangleq \sup_{x\in\mathcal X}\|D\varphi(x)\|_2 < \infty$.
Therefore, for all $x\in\mathcal X$ and $u\in\mathbb R^p$,
\begin{equation}
\label{eq:g_gain_bound}
\|g(x,u)\|_2
\le \|D\varphi(x)\|_2\,\|f_u(x,u)\|_2
\le M_\varphi L_u \|u\|_2,
\end{equation}
so $\sup_{x\in\mathcal X,\,u\neq 0}\|g(x,u)\|_2/\|u\|_2 < \infty$.
Hence Lemma~\ref{lem:norm_preserve_u} applies to $(x,u)\mapsto g(x,u)$ on $\mathcal X\times\mathbb R^p$,
yielding a unitary matrix $U\in\mathbb R^{q\times q}$, a diagonal $\Sigma\in\mathbb R^{q\times(p+q)}$,
and a mapping $v:\mathcal X\times\mathbb R^p\to\mathbb R^{p+q}$ such that
\begin{equation}
\label{eq:g_factorization}
g(x,u)=U\Sigma\,v(x,u),
\qquad
\|v(x,u)\|_2=\|u\|_2,
\end{equation}
for all $x\in\mathcal X$ and $u\in\mathbb R^p$.
Let $B\triangleq U\Sigma$. Substituting \eqref{eq:g_factorization} into \eqref{eq:lifted_split_no_bars}
gives \eqref{eq:lifted_affine_like}; \eqref{eq:output_in_lift} provides the output equation.
\end{proof}

Theorem~\ref{theorem:norm_preserving} establishes two structural facts that will be used throughout the remainder of Part~1:
\begin{enumerate}
\item \textbf{Input-energy calibration (metric fidelity).}
The lifted input satisfies $\|v(x,u)\|_2=\|u\|_2$ pointwise, so gain cannot be hidden inside $v$; it must be carried by the constant input channel $B$.
This calibration is what later restores the interpretability of HSV-based $H_\infty$ truncation bounds in the \emph{physical} input metric.
\item \textbf{Affine-like actuation without control-affine assumptions.}
Even when the original system is not of the form $\dot{x}=f(x)+g(x)u$, the lifted dynamics can be written as an LTI-like system driven by an instantaneous input $v(x,u)$ through a \emph{constant} matrix $B$.
And though $v$ is a function of both $x$ and $u$, it's norm-equivalence to $u$ will be sufficient to isolate input-output gains in the next result.
\end{enumerate}

\subsubsection{A Finite-dimensional Example}

We illustrate the construction in Theorem~\ref{theorem:norm_preserving} on a two-dimensional system whose autonomous dynamics admit an exact finite-dimensional Koopman lift, and whose control contribution becomes non-affine under a simple modification.
The point of this example is to make the objects $(\varphi,A)$ and the calibrated input channel $Bv(x,u)$ fully explicit.

\noindent\emph{Reader guide.} This example follows the proof logic of Theorem~\ref{theorem:norm_preserving} verbatim. Step~1 specifies an exact Koopman lift $(\varphi,A)$ for the autonomous dynamics. Step~2 isolates the lifted control-induced term $g(x,u)$ via $g(x,u)=D\varphi(x)\big(f(x,u)-f(x,0)\big)$. Step~3 computes worst-case (uniform) coordinate gains of $g$ over a compact region $\mathcal X_R$ to obtain a single ordering and diagonal scaling that work for all admissible $(x,u)$. Step~4 then applies Lemma~\ref{lem:norm_preserve_u} with that uniform scaling to produce a constant matrix $B$ and a pointwise $u$-norm-preserving lift $v(x,u)$ satisfying $Bv(x,u)=g(x,u)$.

\paragraph{Step 1: Exact Koopman closure of the autonomous dynamics.}
We begin with the two-dimensional nonlinear system from~\cite{brunton2022review} and augment it with a scalar input channel on $x_1$:
\begin{equation}
\label{eq:brunton_controlled}
\begin{aligned}
\dot{x}_1 &= \mu x_1 + u, \\
\dot{x}_2 &= \lambda \big(x_2 - x_1^2\big).
\end{aligned}
\end{equation}
The autonomous dynamics ($u\equiv 0$) admit an exact finite-dimensional lifted linear representation with
\begin{equation}
\label{eq:phi0_def}
\varphi(x) \triangleq
\begin{bmatrix}
x_1 \\[2pt] x_2 \\[2pt] x_1^2
\end{bmatrix},
\qquad
A \triangleq
\begin{bmatrix}
\mu & 0 & 0 \\
0 & \lambda & -\lambda \\
0 & 0 & 2\mu
\end{bmatrix},
\end{equation}
so that $\dot{\varphi}_0(x)=A\varphi(x)$.

The Jacobian of the lifting is
\begin{equation}
\label{eq:Dphi0_example}
D\varphi(x)=
\begin{bmatrix}
1 & 0 \\[2pt]
0 & 1 \\[2pt]
2x_1 & 0
\end{bmatrix}.
\end{equation}

\paragraph{Step 2: Lifted control-induced contribution.}
Using the decomposition $f(x,u)=f(x,0)+f_u(x,u)$ with
$f_u(x,u)\triangleq f(x,u)-f(x,0)$, define the lifted control-induced term
\begin{equation}
\label{eq:g_def_example}
g(x,u)\triangleq D\varphi(x)\,f_u(x,u)
= D\varphi(x)\big(f(x,u)-f(x,0)\big).
\end{equation}
A direct computation shows the autonomous lifted dynamics close exactly:
\begin{equation}
\begin{aligned}
\label{eq:brunton_A_expand}
D\varphi(x)\,f(x,0)
&=
\begin{bmatrix}
1 & 0 \\
0 & 1 \\
2x_1 & 0
\end{bmatrix}
\begin{bmatrix}
\mu x_1 \\
\lambda(x_2-x_1^2)
\end{bmatrix} \\
&=
\begin{bmatrix}
\mu x_1\\
\lambda(x_2-x_1^2)\\
2\mu x_1^2
\end{bmatrix} \\
&=
A\varphi(x).
\end{aligned}
\end{equation}
Therefore,
\begin{equation}
\label{eq:lifted_split_example}
D\varphi(x)\,f(x,u)=A\varphi(x)+g(x,u).
\end{equation}

For the affine input channel in \eqref{eq:brunton_controlled}, the control-induced term is
\begin{equation}
\label{eq:delta_f_affine}
f_u(x,u)=f(x,u)-f(x,0)=
\begin{bmatrix}
u\\[2pt]
0
\end{bmatrix}.
\end{equation}
Therefore, the lifted control-induced term evaluates to
\begin{equation}
\begin{aligned}
\label{eq:g_affine}
g(x,u)
&=D\varphi(x)\,f_u(x,u)
=
\begin{bmatrix}
1 & 0\\
0 & 1\\
2x_1 & 0
\end{bmatrix}
\begin{bmatrix}
u\\
0
\end{bmatrix}
=
\begin{bmatrix}
u\\
0\\
2x_1u
\end{bmatrix}.
\end{aligned}
\end{equation}

\paragraph{Step 3: Coordinate gains and a uniform ordering on a compact set.}
For each fixed state $x$, define the \emph{coordinate-wise induced gain in the scalar input $u$} by
\begin{equation}
\label{eq:coord_gain_def_example}
\|\,(g(x,\cdot))_i\,\|_{(2\to2)_u}\;\triangleq\;
\sup_{u\neq 0}\frac{|(g(x,u))_i|}{|u|},
\qquad i\in\{1,2,3\}.
\end{equation}

For \eqref{eq:g_affine}, this gives
\begin{equation}
\begin{aligned}
\|\,(g(x,\cdot))_1\,\|_{(2\to2)_u}&=1, \\
\|\,(g(x,\cdot))_2\,\|_{(2\to2)_u}&=0, \\
\|\,(g(x,\cdot))_3\,\|_{(2\to2)_u}&=2|x_1|.
\end{aligned}
\end{equation}

To select a \emph{single} permutation and diagonal scaling that is valid uniformly over a compact region, we pass to
worst-case (uniform) gains over the ball $\mathcal X_R\triangleq\{x:\|x\|_2\le R\}$:
\begin{equation}
\label{eq:coord_gain_uniform_example}
g_i(R)\;\triangleq\;\sup_{x\in \mathcal X_R}\|\,(g(x,\cdot))_i\,\|_{(2\to2)_u}.
\end{equation}
Since $\sup_{\|x\|_2\le R}|x_1|=R$, we obtain
\begin{equation}
g_1(R)=1,\qquad g_2(R)=0,\qquad g_3(R)=2R.
\end{equation}
Hence, for $R\ge \tfrac12$, the \emph{uniform} ordering
\[
g_3(R)\;\ge\; g_1(R)\;\ge\; g_2(R)
\]
holds on $\mathcal X_R$.
This ordering is intentionally worst-case: at particular states (e.g., $x_1=0$) the instantaneous ordering of $\|\,(g(x,\cdot))_i\,\|_{(2\to2)_u}$ a single diagonal scaling in $\Sigma$ can dominate all admissible $(x,u)$ in $\mathcal X_R$.

\paragraph{Step 4: Apply Lemma~\ref{lem:norm_preserve_u} to construct $B$ and $v(x,u)$.}
Lemma~\ref{lem:norm_preserve_u} requires choosing $\Sigma$ large enough that the kernel term is real-valued uniformly
over the admissible region.
Using the uniform gains from \eqref{eq:coord_gain_uniform_example}, a simple choice is to scale the nonzero gains by a
common factor $c>\sqrt2$ (and include a small $\varepsilon>0$ for the zero-gain coordinate):
\begin{equation}
\label{eq:Sigma_example}
\Sigma=
\begin{bmatrix}
2cR & 0 & 0 & 0\\
0 & c & 0 & 0\\
0 & 0 & \varepsilon & 0
\end{bmatrix},\qquad \varepsilon>0.
\end{equation}
Choose the permutation matrix $U\in\mathbb R^{3\times 3}$ so that $U^T$ reorders coordinates according to the
uniform gain ordering $g_3(R)\ge g_1(R)\ge g_2(R)$, i.e.,
\begin{equation}
U^T
\begin{bmatrix}
(g(x,u))_1\\ (g(x,u))_2\\ (g(x,u))_3
\end{bmatrix}
=
\begin{bmatrix}
(g(x,u))_3\\ (g(x,u))_1\\ (g(x,u))_2
\end{bmatrix},
\;\text{for example}\;
U=
\begin{bmatrix}
0 & 1 & 0\\
0 & 0 & 1\\
1 & 0 & 0
\end{bmatrix}.
\end{equation}
Then define $B\triangleq U\Sigma$:
\begin{equation}
\label{eq:B_example}
B=
\begin{bmatrix}
0 & c & 0 & 0\\
0 & 0 & \varepsilon & 0\\
2cR & 0 & 0 & 0
\end{bmatrix}.
\end{equation}
Then, per the construction in Lemma~\ref{lem:norm_preserve_u}, we obtain
\begin{align}
\label{eq:v_support_example}
v_{\mathrm{support}}(x,u)
&=
\begin{bmatrix}
\frac{x_1}{cR}u\\[2pt]
\frac{1}{c}u\\[2pt]
0\\[2pt]
0
\end{bmatrix},
\\
\label{eq:v_kernel_example}
v_{\mathrm{kernel}}(x,u)
&=
\begin{bmatrix}
0\\0\\0\\u
\end{bmatrix}
\sqrt{1-\frac{\|v_{\mathrm{support}}(x,u)\|^2}{\|u\|^2}},
\\
v(x,u)&=v_{\mathrm{support}}(x,u)+v_{\mathrm{kernel}}(x,u),
\end{align}
with the convention $v_{\mathrm{kernel}}(x,0)=0$ (and hence $v(x,0)=0$),
so that $\|v(x,u)\|=\|u\|$ pointwise and $g(x,u)=U\Sigma v(x,u)$.
Indeed, $v_{\mathrm{kernel}} \in \mathrm{ker(B)}$, hence
\begin{align}
Bv(x,u)
&= Bv_{\mathrm{support}}(x,u) \nonumber\\
&= \begin{bmatrix}
c\cdot\frac{1}{c}u\\[2pt]
0\\[2pt]
2cR\cdot\frac{x_1}{cR}u
\end{bmatrix} \nonumber\\
&= \begin{bmatrix}
u\\[2pt]
0\\[2pt]
2x_1u
\end{bmatrix} \nonumber\\
&= g(x,u).
\end{align}
which matches \eqref{eq:g_affine} (i.e., $g(x,u)=[u,\,0,\,2x_1u]^\top$ in this affine case).
Thus the lifted identity \eqref{eq:lifted_split_example} takes the affine-like form
$D\varphi(x)\,f(x,u)=A\varphi(x)+Bv(x,u)$ with a pointwise $u$-norm-preserving lifted input.

\medskip
\noindent\textbf{Takeaway.}
In this system the autonomous Koopman closure fixes $(\varphi,A)$, and the control enters the lifted coordinates only through the explicitly computable term $g(x,u)$.
Once $g$ is known, Steps~3--4 follow directly from the construction: a \emph{uniform} ordering and scaling over $\mathcal X_R$ are what make the support/kernel construction real-valued for all admissible states, yielding a single constant input channel $B$.
The resulting factorization $g(x,u)=Bv(x,u)$ makes the gain calibration visible: \emph{all actuation gain is carried by $B$}, while $v(x,u)$ preserves the physical input energy pointwise, $\|v(x,u)\|_2=\|u\|_2$.

\subsubsection{Non-affine input variant (replacing $u$ by $\sin(u)$)}

Now replace the input channel in \eqref{eq:brunton_controlled} by $\sin(u)$:
\begin{equation}
\label{eq:brunton_controlled_sin}
\begin{aligned}
\dot{x}_1 &= \mu x_1 + \sin(u), \\
\dot{x}_2 &= \lambda \big(x_2 - x_1^2\big).
\end{aligned}
\end{equation}
The autonomous lifted dynamics remain unchanged, while the lifted control-induced term becomes
\begin{equation}
\label{eq:g_sin}
g(x,u)=
\begin{bmatrix}
\sin(u)\\
0\\
2x_1\sin(u)
\end{bmatrix}.
\end{equation}
The induced gain in $u$ is unchanged because $|\sin(u)|\le |u|$ for all $u$, hence
\begin{equation}
\sup_{u\neq 0}\frac{|\sin(u)|}{|u|}=1.
\end{equation}
Therefore, the same ordering, the same radius threshold $R\ge \tfrac12$, and the same choice of $c>\sqrt2$ apply.
Consequently, we may use the same matrices $U$ and $\Sigma$ as in \eqref{eq:Sigma_example}--\eqref{eq:B_example};
the only change is that $v_{\mathrm{support}}$ is evaluated on $\sin(u)$:
\begin{equation}
v_{\mathrm{support}}(x,u)=
\begin{bmatrix}
\frac{x_1}{cR}\sin(u)\\[2pt]
\frac{1}{c}\sin(u)\\[2pt]
0\\[2pt]
0
\end{bmatrix},
\end{equation}
with $v_{\mathrm{kernel}}$ updated accordingly to enforce $\|v(x,u)\|=\|u\|$.
The lifted identity therefore retains the same affine-like structure
\[
D\varphi(x)\,f(x,u)=A\varphi(x)+Bv(x,u),
\]
and in this non-affine variant the control-induced term satisfies
\[
Bv(x,u)=g(x,u)=[\sin(u),\,0,\,2x_1\sin(u)]^\top.
\]
Relative to the affine case, the saturation in $\sin(u)$ causes the support component to occupy a smaller fraction of the available input-energy budget, so the kernel component (which lies in $\ker(B)$) can become comparatively larger; equivalently, the lift $v(x,u)$ rotates further into the null space of $B$ while preserving $\|v\|=\|u\|$ pointwise.

\medskip
\noindent\textbf{Takeaway.}
Replacing $u$ by $\sin(u)$ changes the lifted control-induced term from $[u,\,0,\,2x_1u]^\top$ to $[\sin(u),\,0,\,2x_1\sin(u)]^\top$, but it does \emph{not} change the induced gain in $u$ because $|\sin(u)|\le |u|$.
Hence the same uniform ordering and the same $(U,\Sigma)$ remain valid on $\mathcal X_R$, and only the support component depends on the modified channel.
Geometrically, saturation reduces the fraction of the input-energy budget used by the support component, so the kernel component (in $\ker(B)$) can occupy a larger share while still enforcing $\|v(x,u)\|_2=\|u\|_2$.

Theorem~\ref{theorem:norm_preserving} isolates all actuation gain into a \emph{constant} input matrix $B$ and enforces pointwise input-energy calibration $\|v(x,u)\|_2=\|u\|_2$.
This motivates introducing an associated LTI system whose input is an exogenous signal of the same dimension as $v$; the original nonlinear system is recovered by feeding this LTI system with the particular signal $v(x(t),u(t))$ generated along trajectories.

\begin{definition}[Associated LTI system linear in the calibrated input]
\label{def:linear_in_v}
Let $G\in\mathcal{J}$ admit a lifted representation of the form in Theorem~\ref{theorem:norm_preserving} with matrices $(A,B,C)$ and calibrated input map $v(x,u)$.
Define the \emph{associated LTI system} $G^L$ as the LTI input--output operator with state $\varphi\in\mathbb{R}^q$ and exogenous input $w\in\mathbb{R}^{p+q}$:
\begin{equation}
\begin{aligned}
\dot{\varphi}_0 &= A\varphi + Bw,\\
y &= C\varphi.
\end{aligned}
\end{equation}
When representing the original nonlinear system, we take $w(t)=v(x(t),u(t))$.
\end{definition}

\begin{remark}[Why introduce $G^L$] \label{rem:traj_dependent_w} Definition~\ref{def:linear_in_v} introduces $G^L$ so that classical Hankel/Gramian tools apply to the calibrated input channel. For the original nonlinear system, however, the exogenous signal is not free: it is constrained by the trajectory-dependent identification \[ w(t)=v(x(t),u(t)). \] This motivates the results that follow. First, since $G^L$ is an LTI realization, it admits a standard balancing transform whenever it is minimal, enabling certified LTI reduction on the surrogate. Second, after reduction we must compare the surrogate driven by the \emph{true} calibrated signal $w(t)$ to the truncated surrogate driven by a \emph{reduced-state evaluation} of the calibrated map; this creates an input mismatch in $w$. The final bound (Theorem~\ref{theorem:final_bound}) is obtained by decomposing the output error into an input-mismatch term and the classical balanced-truncation term for $G^L$. \end{remark}

When $G^L$ is minimal, it admits a balancing transform $z=T\varphi$ (standard).
Define $\tilde{A}=TAT^{-1}$, $\tilde{B}=TB$, and $\tilde{C}=CT^{-1}$, so that $y=\tilde{C}z$.
Because $\varphi$ is state-inclusive, set $S\triangleq \begin{bmatrix}I_{n\times n}&0\end{bmatrix}$ and $R\triangleq ST^{-1}$ so that $x=Rz$ whenever $z=T\varphi(x)$.

In balanced coordinates, the lifted dynamics become
\[
\dot z=\tilde A z+\tilde B\,v(Rz,u),\qquad y=\tilde Cz.
\]
To define a reduced model, we need an \emph{implementable} calibrated input map expressed directly in balanced coordinates, so that it can be evaluated using only reduced-state information.
We therefore introduce a balanced-coordinate calibrated input map $\tilde v(z,u)$ satisfying
\[
\tilde B\,\tilde v(z,u)=\tilde B\,v(Rz,u),
\qquad
\|\tilde v(z,u)\|_2=\|u\|_2.
\]

After truncation, the reduced model evolves only the retained balanced coordinates.
When the calibrated map must be evaluated using reduced-state information, we use the canonical zero-padding interpretation (made explicit in the definition of the reduced model and the induced signals $w$ and $w_r$ in Lemma~4).
This is exactly the $w$--$w_r$ input mismatch term that appears in the reduction bound developed in Theorem~\ref{theorem:final_bound}.

The next lemma formalizes a (deliberately) mundane bookkeeping step that is nevertheless essential for the bound:
it shows how to express the calibrated input in balanced coordinates so that the injected term driving the balanced surrogate is unchanged, while the pointwise input-energy normalization is preserved.
This gives a canonical, implementable way to define the surrogate inputs used in Lemma~4 (and hence the $w$--$w_r$ decomposition), by separating the part of the calibrated signal that actually drives the balanced dynamics from the remaining degrees of freedom that do not affect the state equation.

\begin{lemma}[Balanced-coordinate calibrated input map]
\label{lemma:same_b}
Let $G\in\mathcal{J}$ and let $(A,B,C,v)$ be as in Theorem~\ref{theorem:norm_preserving}, so that the associated LTI surrogate $G^L$ (Definition~\ref{def:linear_in_v}) has realization $(A,B,C)$.
Let $z=T\varphi(x)$ be a balancing transform for $G^L$ and define
\[
\tilde A \triangleq TAT^{-1},\qquad \tilde B \triangleq TB,\qquad \tilde C \triangleq CT^{-1}.
\]
Assume the factorization $B=U\Sigma$ from Lemma~\ref{lem:norm_preserve_u} is chosen with $\Sigma=[\Sigma_0\ \ 0]$,
where $\Sigma_0\in\mathbb R^{q\times q}$ is diagonal with strictly positive entries (so that $\Sigma\Sigma^\dagger=I_q$).
Then there exists a mapping $\tilde v:\mathbb{R}^q\times\mathbb{R}^p\to\mathbb{R}^{p+q}$ satisfying
\begin{equation}
\begin{aligned}
\|\tilde v(z,u)\|_2 &=\|u\|_2,\quad \forall z\in\mathcal Z,\ \forall u\in\mathbb{R}^p,
\\
\mathcal Z &\triangleq \{\,z\in\mathbb R^q : Rz\in\mathcal X\,\},
\end{aligned}
\end{equation}
such that, under the identification $z=T\varphi(x)$, the balanced-coordinate model
\begin{equation}
\begin{aligned}
\label{eq:balanced_LTI_like}
\dot{z} &= \tilde{A}z + \tilde{B}\tilde{v}(z,u),\\
y &= \tilde{C}z,
\end{aligned}
\end{equation}
reproduces the same $(z(t),y(t))$ trajectories as the balanced realization driven by the trajectory-evaluated input,
\[
\dot z=\tilde A z+\tilde B\,v(Rz,u),\qquad y=\tilde C z,
\]
whenever $Rz(t)\in\mathcal X$.
\end{lemma}

\begin{proof}
\noindent\emph{Step 1: Write the lifted dynamics in balanced coordinates and isolate the control-induced term.}
From Theorem~\ref{theorem:norm_preserving}, the system admits the lifted representation, such that along trajectories of $\dot x=f(x,u)$,
\[
\frac{d}{dt}\varphi(x(t)) = A\varphi(x(t)) + Bv(x(t),u(t)),\;\;
y(t)=C\varphi(x(t)).
\]
Apply the balancing transform $z=T\varphi(x)$ to obtain
\begin{equation}
\begin{aligned}
\dot z = T\dot{\varphi}_0 = T(A\varphi + Bv(x,u)) &= \tilde A z + \tilde B\,v(x,u), \\ 
y &= CT^{-1}z = \tilde C z,
\end{aligned}
\end{equation}
where $\tilde A=TAT^{-1}$, $\tilde B=TB$, and $\tilde C=CT^{-1}$.

Recalling the definition $R \triangleq ST^{-1}$ from the balanced-coordinate construction, we have $x=Rz$ whenever $z=T\varphi(x)$, and therefore
\[
\dot z = \tilde A z + \tilde B\,v(Rz,u).
\]
The control-induced injected term in balanced coordinates is $\tilde B\,v(Rz,u)$.
(Note that $v(Rz,0)=0$ by pointwise norm preservation.)

\medskip
\noindent\emph{Step 2: Construct a norm-preserving $\tilde v(z,u)$ such that $\tilde B\,\tilde v(z,u)=\tilde B\,v(Rz,u)$.}

Write $B=U\Sigma$ as in Theorem~\ref{theorem:norm_preserving} (via Lemma~\ref{lem:norm_preserve_u}), so $\tilde B = TU\Sigma$.
Define the support component
\[
\tilde v_{\mathrm{support}}(z,u)\triangleq \Sigma^\dagger \Sigma\,v(Rz,u),
\]
Then, by construction,
\begin{equation}
\begin{aligned}
\tilde B\,\tilde v_{\mathrm{support}}(z,u)
&= TU\Sigma\,(\Sigma^\dagger\Sigma)\,v(Rz,u) \\
&= TU\Sigma\,v(Rz,u) \\
&= \tilde B\,v(Rz,u).
\end{aligned}
\end{equation}
Define the kernel component (as in Lemma~\ref{lem:norm_preserve_u})
\[
\tilde v_{\mathrm{kernel}}(z,u)\triangleq
\begin{bmatrix}0_q\\ u\end{bmatrix}
\sqrt{\frac{\|u\|_2^2-\|\tilde v_{\mathrm{support}}(z,u)\|_2^2}{\|u\|_2^2}},
\]
with the convention $\tilde v_{\mathrm{kernel}}(z,0)=0$, and set
\[
\tilde v(z,u)\triangleq \tilde v_{\mathrm{support}}(z,u)+\tilde v_{\mathrm{kernel}}(z,u).
\]

Because $\Sigma=[\Sigma_0\ \ 0]$, we have $\Sigma\tilde v_{\mathrm{kernel}}=0$,
Moreover, by the support/kernel computation, $\|\tilde v(z,u)\|_2=\|u\|_2$ pointwise.

Since $\Sigma=[\Sigma_0\ \ 0]$, the matrix $\Sigma^\dagger\Sigma$ is an orthogonal projector, so
$\|\tilde v_{\mathrm{support}}(z,u)\|_2 \le \|v(Rz,u)\|_2=\|u\|_2$ and the radicand is nonnegative.

\medskip
\noindent\emph{Conclusion.}
Using $\tilde B\,\tilde v(z,u)=\tilde B\,v(Rz,u)$ in the balanced-coordinate dynamics yields
\[
\dot z = \tilde A z + \tilde B\tilde v(z,u),\qquad y=\tilde C z,
\]
which is \eqref{eq:balanced_LTI_like}.
\end{proof}

\begin{remark}[Interpretation of Lemma~\ref{lemma:same_b}]
\label{rem:gauge_tildev}
In balanced coordinates, the state equation depends on the calibrated input only through the injected term $\tilde B\,w$.
Accordingly, any two signals $w,w'\in\mathbb R^{p+q}$ satisfying $\tilde B w=\tilde B w'$ are dynamically indistinguishable, since their difference lies in $\ker(\tilde B)$.
Lemma~\ref{lemma:same_b} fixes a convenient representative of this equivalence class: the support component
\[
\tilde v_{\mathrm{support}}(z,u)=\Sigma^\dagger\Sigma\,v(Rz,u)
\]
is the orthogonal projection of $v(Rz,u)$ onto the $\tilde B$-visible subspace and satisfies $\tilde B\,\tilde v_{\mathrm{support}}(z,u)=\tilde B\,v(Rz,u)$.
The remaining degrees of freedom in $\ker(\tilde B)$ are then used (via $\tilde v_{\mathrm{kernel}}$) to enforce the pointwise calibration $\|\tilde v(z,u)\|_2=\|u\|_2$ without altering the injected term.
This bookkeeping is what makes the surrogate inputs $w$ and $w_r$ in Lemma~4 well-defined and enables the $w-w_r$ mismatch decomposition used in the reduction bound.
Moreover, since $\Sigma^\dagger\Sigma$ is an orthogonal projector and $\|v(Rz,u)\|_2=\|u\|_2$, we have
\begin{equation}
\label{eq:support_bound}
\|\tilde v_{\mathrm{support}}(z,u)\|_2 \le \|u\|_2,
\qquad \forall (z,u)\ \text{such that}\ Rz\in\mathcal X.
\end{equation}
\end{remark}

\begin{remark}
For a system $G\in\mathcal{J}$, the model \eqref{eq:balanced_LTI_like} is a similarity transform of $G^L$, so we will use $G^L$ to refer to either the original or balanced coordinates when the meaning is clear from context.
\end{remark}

We now combine the calibration-based input-mismatch estimate with the classical balanced truncation certificate for the associated LTI surrogate $G^L$.
Lemma~4 isolates the nonlinear difficulty into a mismatch term proportional to $\|G^L\|_{H_\infty}$ and reduces the remainder to the purely LTI quantity $\|G^L-G_r^L\|_{H_\infty}$; Theorem~2 then closes the argument by upper bounding this latter term by the Hankel singular value tail of $G^L$.

Before stating the next bound, we fix the induced-norm convention and the admissible signal/trajectory class so that the operators below are unambiguously defined.
Throughout this section, all induced norms (e.g., $\|G\|_{H_\infty(\mathcal U)}$ and $\|G^L\|_{H_\infty}$) are interpreted under a fixed equilibrium initial condition, i.e., $x(0)=0$ and hence $\varphi(0)=0$ (equivalently $z(0)=0$ in balanced coordinates).
Under this convention, $G:u(\cdot)\mapsto y(\cdot)$ and $G^L:w(\cdot)\mapsto y(\cdot)$ are well-defined causal operators on $L^2$ over the admissible trajectory class (in particular, trajectories remaining in $\mathcal X$).
Since $A$ is Hurwitz and the realization has no direct term, $G^L$ is stable and strictly proper, and $\|G^L\|_{H_\infty}<\infty$.

To make the truncation construction fully explicit, we now introduce the canonical projection/embedding pair between the full balanced state space $\mathbb R^q$ and its retained $r$-dimensional subspace.
Fix an order $r<q$. Let $\Pi_r:\mathbb R^{q}\to\mathbb R^{r}$ denote the coordinate projection
\[
\Pi_r z \triangleq \begin{bmatrix} I_r & 0_{r\times(q-r)}\end{bmatrix} z,
\]
and let $E:\mathbb R^{r}\to\mathbb R^{q}$ denote the canonical zero-padding embedding (a right-inverse of $\Pi_r$),
\begin{equation}\label{eq:zero_padding_map}
E z_r \triangleq \begin{bmatrix} z_r \\ 0_{q-r}\end{bmatrix},
\qquad
E \;=\;\begin{bmatrix} I_r \\ 0_{(q-r)\times r}\end{bmatrix}.
\end{equation}
Then $\Pi_r E = I_r$ and $E\Pi_r$ is the projection onto the first $r$ balanced coordinates.

\begin{lemma}[Calibration-based bound over an admissible input class]
\label{lem:closed_loop_BT}

Let $G\in\mathcal J$ admit the balanced-coordinate lifted representation \eqref{eq:balanced_LTI_like} with associated stable LTI surrogate $G^L$.
Assume $G^L$ is realized in minimal form, and let $G_r^L$ denote the order-$r$ balanced truncation of this minimal realization with realization $(\tilde A_r,\tilde B_r,\tilde C_r)$ and state $z_r\in\mathbb R^r$.
Define the implementable reduced nonlinear model $G_r$ by
\begin{equation}
\label{eq:Gr_nonfeedback}
\dot z_r = \tilde A_r z_r + \tilde B_r\,\tilde v_{\mathrm{support}}(E z_r,u),\qquad y_r = \tilde C_r z_r.
\end{equation}
Fix an admissible input class $\mathcal U\subset L^2$ such that for every $u(\cdot)\in\mathcal U$,
the trajectories satisfy $z(t)\in\mathcal Z$ and $Ez_r(t)\in\mathcal Z$ for all $t\ge 0$,
where $\mathcal Z\triangleq\{z\in\mathbb R^q:\;Rz\in\mathcal X\}$.
Then the induced input--output error satisfies
\begin{equation}
\label{eq:closed_loop_bound}
\|G-G_r\|_{H_\infty(\mathcal U)}
\le 2\|G^L\|_{H_\infty} + \|G^L-G_r^L\|_{H_\infty}.
\end{equation}
\end{lemma}

\begin{proof}
Fix $u(\cdot)\in\mathcal U$ and let $z(\cdot)$ and $z_r(\cdot)$ denote the trajectories of $G$ and $G_r$ driven by $u(\cdot)$.
Define the calibrated support signals
\[
w(t)\triangleq \tilde v_{\mathrm{support}}(z(t),u(t)),\quad
w_r(t)\triangleq \tilde v_{\mathrm{support}}(E z_r(t),u(t)).
\]
By Lemma~\ref{lemma:same_b}, the support/kernel split yields $\tilde v=\tilde v_{\mathrm{support}}+\tilde v_{\mathrm{kernel}}$ with
$\tilde B,\tilde v_{\mathrm{kernel}}(z,u)=0$ for all $(z,u)$; hence the injected term depends only on the support component.
Fix $u(\cdot)\in\mathcal U$, let $z(\cdot)$ and $z_r(\cdot)$ be the resulting trajectories of $G$ and $G_r$, and define the (endogenous) support signals
\[
w(t)\triangleq \tilde v_{\mathrm{support}}(z(t),u(t)),\quad
w_r(t)\triangleq \tilde v_{\mathrm{support}}(E z_r(t),u(t)).
\]
(Note that there is no circularity: $z(\cdot)$ and $z_r(\cdot)$ are defined by the original dynamics driven by $u(\cdot)$; $w(\cdot)$ and $w_r(\cdot)$ are then \emph{derived} signals. We may subsequently view them as inputs to the LTI surrogates.)

Consequently, the corresponding outputs satisfy
\[
y = G^L(w),\qquad y_r = G_r^L(w_r).
\]
Using linearity of $G^L$ and adding/subtracting $G^L(w_r)$ gives
\begin{equation}
\begin{aligned}
y-y_r
&= G^L(w)-G_r^L(w_r) \\
&= \big(G^L(w)-G^L(w_r)\big) + \big(G^L(w_r)-G_r^L(w_r)\big) \\
&= G^L(w-w_r) + (G^L-G_r^L)(w_r).
\end{aligned}
\end{equation}
Taking $L^2$ norms and using induced gains yields
\[
\|y-y_r\|_{L^2}
\le \|G^L\|_{H_\infty}\,\|w-w_r\|_{L^2}
   + \|G^L-G_r^L\|_{H_\infty}\,\|w_r\|_{L^2}.
\]
By the calibration/support bound (Remark~\ref{rem:gauge_tildev}), we have $\|w(t)\|_2\le \|u(t)\|_2$ and $\|w_r(t)\|_2\le \|u(t)\|_2$ for all $t$, hence
\[
\|w-w_r\|_{L^2}\le \|w\|_{L^2}+\|w_r\|_{L^2}\le 2\|u\|_{L^2},
\;\;
\|w_r\|_{L^2}\le \|u\|_{L^2}.
\]
Substituting gives
\[
\|y-y_r\|_{L^2}
\le \Big(2\|G^L\|_{H_\infty}+\|G^L-G_r^L\|_{H_\infty}\Big)\,\|u\|_{L^2}.
\]
Dividing by $\|u\|_{L^2}$ and taking the supremum over $u\in\mathcal U\setminus\{0\}$ yields \eqref{eq:closed_loop_bound}.
\end{proof}

\begin{theorem}[Non-feedback implementable error bounds for reduced nonlinear control systems]
\label{theorem:final_bound}
Assume $G\in\mathcal J$ and let $G^L$ be the associated stable LTI surrogate, realized in minimal form, with Hankel singular values $\{\nu_i\}_{i=1}^q$.
Let $G_r$ be the implementable reduced model defined in Lemma~\ref{lem:closed_loop_BT}.
Then
\begin{equation}
\label{eq:bounds_impl}
\|G-G_r\|_{H_\infty(\mathcal U)}
\le
2\|G^L\|_{H_\infty}
+ 2\sum_{i=r+1}^{q}\nu_i .
\end{equation}
\end{theorem}

\begin{proof}
By Lemma~\ref{lem:closed_loop_BT},
\[
\|G-G_r\|_{H_\infty(\mathcal U)}
\le 2\|G^L\|_{H_\infty} + \|G^L-G_r^L\|_{H_\infty}.
\]
Classical balanced truncation for $G^L$ yields
\[
\|G^L-G_r^L\|_{H_\infty} \le 2\sum_{i=r+1}^{q}\nu_i.
\]
Combining the two inequalities gives \eqref{eq:bounds_impl}.
\end{proof}

\begin{remark}
The term $2\|G^L\|_{H_\infty}$ is independent of $r$: without additional regularity linking the reduced state to the calibrated input map, the mismatch signal $w-w_r$ is only bounded in norm by $2\|u\|_{L^2}$ and need not shrink as $r$ increases.
For LTI systems the calibrated input map is state-independent, so $w\equiv w_r$ and this mismatch term vanishes, recovering the classical balanced truncation estimate.
\end{remark}

\begin{remark}
The tightness of the bound depends on the gain allocation between the static matrix $\Sigma$ (hence $B$) and the calibrated lift $v$.
If $\Sigma$ is scaled conservatively to guarantee real-valuedness of the kernel term uniformly, then $\|G^L\|_{H_\infty}$ and the HSV tail $\sum_{i=r+1}^q \nu_i$ may become more conservative.
\end{remark}

\begin{remark}
An important assumption in Theorem~\ref{theorem:final_bound} is the existence of an exact finite-dimensional Koopman closure for the autonomous dynamics with a lifting whose coordinate functions have finite $2$-induced norm.
The next section accounts for deviations from this closure assumption.
\end{remark}

\subsection{Main Results Part 2: Extension to Systems with Approximate Koopman Representations}
\label{sec:approx_koopman}

Part~1 assumed exact finite-dimensional Koopman closure of the lifted autonomous dynamics. When closure holds only approximately, the lifted dynamics acquire an additional residual channel. This residual can be isolated as a feedback uncertainty acting on an otherwise LTI-like lifted system, without destroying the input-energy calibration established in Part~1. The definitions and intermediate constructions needed to interpret the resulting bound (approximate closure, feedback sister systems, and the small-gain gap estimate) are collected in Appendix~A; the main consequences are summarized below.

The resulting analysis is organized around a symmetric pair of robustness operations. 
First, the closure-error channel is isolated as a memoryless feedback block $G^E$ acting on the lifted state and is \emph{peeled off} at full order, producing a nominal plant $G^P$ whose input--output behavior differs from the original system by a small-gain gap quantified by $\xi(G^P,G^E)$.
This nominal plant falls directly within the scope of Part~1, so balanced truncation yields a certified reduction $G^P\to G_r^P$ with a Hankel singular value bound.

Second, the same closure-error feedback is \emph{reintroduced} after truncation. This mirrors the first step at reduced order and incurs an additional small-gain gap $\xi(G_r^P,G_r^E)$. Thus, the robustness penalty associated with approximate Koopman closure appears twice: once before truncation and once after, reflecting the symmetry between removing and restoring the feedback channel.

Because the feedback representation is stated most cleanly in identity-output form, the comparison between the physical output map and the identity output is handled separately using a resolvent-type estimate of the form $\|C_1-C_2\|_{2\to2}\,\|\Phi\|_{H_\infty}$ (Observation~\ref{observation:different_c}). Together, these steps yield the final bound of Theorem~\ref{theorem:bound_with_error}, in which the total error decomposes into paired robustness terms, a certified truncation term, and two output-map mismatch terms.

\begin{theorem}[Error bound for reduced control systems with approximate Koopman closure]
\label{theorem:bound_with_error}
Let $G\in\mathcal{J}^+$ and fix an admissible input class $\mathcal{U}\subset L^2$ (all induced gains evaluated at the equilibrium initial condition).
Let $G^I$, $G^P$, and $G^E$ denote the identity-output and feedback sister systems associated with $G$ (Definition~\ref{def:embedded_systems}),
and let $G_r^P$, $G_r^E$, and $G_r^I$ denote their order-$r$ reduced counterparts obtained by truncating the nominal LTI surrogate and re-forming the same feedback interconnection at reduced order.

Assume the small-gain conditions hold for the full- and reduced-order feedback interconnections:
\[
\|G^P G^E\|_{H_\infty} < 1,
\qquad
\|G_r^P G_r^E\|_{H_\infty} < 1.
\]
Assume further that for every $u(\cdot)\in\mathcal{U}$, all trajectories compared below exist for all $t\ge 0$ and remain in the prescribed compact set $\mathcal{X}$.
Equivalently, in balanced lifted coordinates $z\in\mathbb{R}^q$ and reduced coordinates $z_r\in\mathbb{R}^r$, we have
\[
R z(t)\in\mathcal{X},
\qquad
R E z_r(t)\in\mathcal{X}
\quad\forall t\ge 0,
\]
where $R$ is the state readout map in balanced coordinates and
\[
E:\mathbb{R}^r\to\mathbb{R}^q,\qquad
E z_r \triangleq \begin{bmatrix} z_r \\ 0_{q-r}\end{bmatrix}
\]
is the canonical zero-padding embedding.

Let $(G^P)^L$ denote the associated stable LTI surrogate obtained by treating the calibrated actuation signal as an exogenous input $w$:
\[
\dot z = \tilde A z + \tilde B w,\qquad y = I z,
\]
and let $\{\nu_i\}_{i=1}^q$ be the Hankel singular values of $(G^P)^L$.
Let $\Phi$ and $\Phi_r$ denote the resolvent input maps associated with the full- and order-$r$ truncated surrogates (Definition~\ref{def:def_of_phi}).

Then the induced input--output error between $G$ and its order-$r$ reduced model $G_r$ satisfies
\begin{equation}
\label{eq:bound_with_error_clean_summary}
\begin{aligned}
\|G - G_r\|_{H_\infty(\mathcal{U})}
\;&\le\;
\|\tilde C_0 - I\|_{2\to2}\,\|\Phi\|_{H_\infty}
\\
&+\;\xi(G^P,G^E)
\\
&+\; 2\Big(\|(G^P)^L\|_{H_\infty} \;+\; \sum_{i=r+1}^{q}\nu_i\Big)
\\
&+\;\xi(G_r^P,G_r^E)
\\
&+\;\|\tilde C_{r,0} - I_{r\times r}\|_{2\to2}\,\|\Phi_r\|_{H_\infty},
\end{aligned}
\end{equation}
where $\xi(\cdot,\cdot)$ is the small-gain gap bound (Definition~\ref{def:def_of_xi}), and $\tilde C_0$ and $\tilde C_{r,0}$ are the zero-padded embeddings of the full and reduced output maps (Definition~\ref{def:embedded_c}).
\end{theorem}
The bound in Theorem~\ref{theorem:bound_with_error} separates into three qualitatively different contributions.
The output-map mismatch terms, which arise from temporarily replacing the physical output map by the identity to expose the feedback structure, are independent of the Koopman closure accuracy.
In contrast, the robustness penalties $\xi(G^P,G^E)$ and $\xi(G_r^P,G_r^E)$ quantify the sensitivity of the reduction to closure error and grow unbounded as the corresponding small-gain margins approach zero.

The small-gain condition used to control the closure residual is sufficient but not necessary; accordingly, divergence of the bound does not imply divergence of the actual input--output error.
Finally, the construction relies only on state-inclusivity of the lifting: exact Koopman structure is not required, and even identity or Jacobian-based coordinates fit within the same framework when the residual satisfies the small-gain condition.

\section{Example and Experimental Results}

\subsection{Algebraic Example, Overview}

In this section we instantiate the reduction-and-certification pipeline implied by Theorem~2 on a concrete nonlinear system with state-dependent, non-affine actuation (Appendix~B). The goal is twofold. First, we isolate the metric-mismatch failure mode emphasized in the Introduction: if the lifted input $v(x,u)$ is not calibrated to the physical control energy, then Hankel/BT quantities computed on the lifted surrogate can be valid only with respect to a different lifted-input norm and may cease to certify the original input--output operator induced by $u$. Second, we show that when the pointwise calibration constraint $\|v(x,u)\|_2=\|u\|_2$ is enforced, the resulting lifted surrogate admits a constant input channel and the associated Hankel singular values regain their standard interpretation as energy-coupled input--output modes in the physical input metric.

We evaluate these effects numerically on a $25$-state nonlinear network model (a five-neuron Hodgkin--Huxley network with saturating optogenetic control; details in Appendix~B). Figures~\ref{fig:hh_certification_spectral}--\ref{fig:hh_rollout_time_domain} assess certificate validity by comparing predicted Hankel/BT bounds to measured rollout error under identical bounded inputs, with and without input-energy calibration. The remainder of the section then derives the explicit one-mode reduced dynamics obtained after balancing, showing how the reduced nonlinear vector field can be written as a weighted superposition of the original coordinate functionals $\{f_i\}$ with the reduced coordinate substituted back into each $f_i$.

\subsubsection{Numerical protocol}
We evaluate the proposed certification pipeline on the discrete-time Hodgkin--Huxley network described in Appendix~B. 
From simulated trajectories $\{(x_k,u_k)\}$ we fit a lifted LTI surrogate of the form
\[
z_{k+1}=Az_k + B\,v(x_k,u_k), \qquad y_k=Cz_k,
\]
and compute (i) the spectrum of $A$, (ii) the singular values of $B$, and (iii) the Hankel singular values (HSVs) of the associated LTI system $G^L$.
We then perform balanced truncation at multiple reduced orders $r$ and compare the predicted reduction bound
\(
\|G^L-G^L_r\|_{\mathcal H_\infty}\le 2\sum_{i=r+1}^q \nu_i
\)
to the empirically measured rollout error of the reduced nonlinear surrogate.
To isolate the role of input-energy calibration, we train two variants: (a) a norm-preserving lifting constrained to satisfy $\|v(x_k,u_k)\|_2=\|u_k\|_2$ pointwise, and (b) an unconstrained lifting.
Figures~\ref{fig:hh_certification_spectral}--\ref{fig:hh_rollout_time_domain} summarize the resulting differences in identifiability, reduction spectra, and certificate validity.

\paragraph{Discrete-time implementation}
All experiments are conducted from sampled trajectories and are intended for digital control, so we work with the discrete-time input--output operator induced by the sampled system. Accordingly, the lifted surrogate is identified in discrete time, and all induced-gain quantities (including $\mathcal H_\infty$ norms and Hankel singular values) are computed for the resulting discrete-time LTI surrogate. This is the discrete-time analogue of the operator-norm framework used in Theorem~2, with $L_2$ replaced by $\ell_2$.

We implement the learned lifted surrogate in the form
\[
z_{k+1}=Az_k+B\,v(x_k,u_k), \qquad y_k=Cz_k,
\]
where discrete-time stability corresponds to $\rho(A)<1$ (eigenvalues inside the unit disk).
All induced-gain quantities are computed in the discrete-time $\mathcal{H}_\infty$ sense,
\[
\|G\|_{\mathcal{H}_\infty}=\sup_{\omega\in[0,2\pi)} \sigma_{\max}\!\bigl(G(e^{j\omega})\bigr),
\]
and controllability/observability Gramians are obtained from the Stein equations
\begin{align}
P &= A P A^\top + B B^\top, \label{eq:stein_P}\\
Q &= A^\top Q A + C^\top C, \label{eq:stein_Q}\\
\intertext{equivalently,}
0 &= A P A^\top - P + B B^\top, \label{eq:stein_P_residual}\\
0 &= A^\top Q A - Q + C^\top C. \label{eq:stein_Q_residual}
\end{align}
The pointwise calibration constraint is enforced as $\|v(x_k,u_k)\|_2=\|u_k\|_2$, so that $\|v\|_{\ell_2}=\|u\|_{\ell_2}$ along trajectories.
In implementation, this is enforced by normalizing the raw lifted-input output $\hat v(x_k,u_k)$ to unit Euclidean norm and then scaling by the input magnitude:
\[
v(x_k,u_k) \;=\; \|u_k\|_2 \,\frac{\hat v(x_k,u_k)}{\|\hat v(x_k,u_k)\|_2+\varepsilon},
\]
with a small $\varepsilon>0$ to avoid division by zero when $\hat v$ is (near) zero.

The calibration constraint places the lifted surrogate in the correct input-energy metric, so that Hankel/BT certificates are interpreted with respect to the original input $u$. The balanced truncation tail bound governs only the truncation error of the stable LTI surrogate,
\[
\|G^L-G_r^L\|_{H_\infty} \le 2\sum_{i=r+1}^{q}\nu_i,
\]
while the overall nonlinear reduction error contains an additional state-dependent term arising from recomputation of the calibrated input channel. Theorem~\ref{theorem:final_bound} combines these contributions into a single induced-gain bound for the nonlinear operator.

\subsubsection*{Summary of numerical findings}
Figure~\ref{fig:hh_certification_spectral} shows that constraining $\|v\|_2=\|u\|_2$ resolves a scale/gauge ambiguity between $(B,v)$: the norm-preserving model concentrates gain in $B$ and yields HSV spectra whose truncation bounds upper-bound the measured error, whereas the unconstrained model can hide effective gain inside $v(x,u)$ and thereby produce non-certifying bounds despite stable $A$.
Figure~\ref{fig:hh_rollout_time_domain} visualizes the same effect in time domain: under identical bounded inputs, the unconstrained lifting yields catastrophic divergence while the norm-preserving lifting produces stable rollouts and meaningful low-rank reductions.

We now shift focus from certificate validity to structure. Having established that input-energy calibration is necessary for Hankel/BT quantities to meaningfully certify reduction error in the physical input metric, we examine the form of the reduced nonlinear dynamics themselves. In particular, we show that after balancing and truncation to a single dominant mode, the reduced vector field admits an explicit algebraic representation as a weighted superposition of the original coordinate functionals, with the reduced coordinate substituted back into each nonlinearity. This derivation clarifies how the balanced mode selects and combines the underlying physical mechanisms of the full system.

\subsection{Setup and notation}
Let $x\in\mathbb{R}^n$ denote the full state and let $\varphi_0:\mathbb{R}^n\to\mathbb{R}^q$ be a state-inclusive lifting. For the algebra below, it is convenient to write the dynamics componentwise as,
\[
\dot{x}_i = f_i([I_n 0]\varphi_0(x),u), \qquad i=1,\dots,n,
\]
so that \(f=[f_1,\dots,f_n]^\top\) is the original right-hand side. Let \(z = T \varphi_0(x)\) be the balanced coordinates and \(x=[I_n 0] T^{-1}z\). 
When we retain only the first $r$ balanced modes, we consider $T^{-1}_{:w}$, the first $w$ columns of $T^{-1}$, such that the projection for the kept coordinates is
\[
R = [I\;0]\,T^{-1}_{:w} \;\in\; \mathbb{R}^{n\times 1}.
\]
In the special case of $w=1$, we define
\[
r \;=\; \begin{bmatrix} r_1 \\ r_2 \\ \vdots \\ r_n \end{bmatrix} \in \mathbb{R}^n,
\quad\text{so that}\quad
R=r,\qquad x \approx R z_1 = r\,z_1.
\]
Thus \(r_i\) is the weight attached to the \(i\)-th balanced coordinate and equation in the dominant balanced mode.

The one-mode dynamics for the first balanced coordinate follow from the chain rule. Since
\[
z = T\varphi_0(x),\qquad z_1 = e_1^\top z = e_1^\top T\varphi_0(x),
\]
Along trajectories $\dot x = f(x,u)$,
\[
\dot z_1
= \frac{d}{dt}\big(e_1^\top T\varphi_0(x)\big)
= e_1^\top T\,D\varphi_0(x)\,f(x,u).
\]
Under the one-mode embedding $x \approx r z_1$, this yields the reduced scalar dynamics
\[
\dot z_1 \approx e_1^\top T\,D\varphi_0(r z_1)\,f(r z_1,u).
\]

\subsubsection*{Step 1: Linear combination of coordinate functionals}
Using the Moore–Penrose pseudoinverse for a full-column vector \(R=r\),
\[
R^\dagger = \frac{r^\top}{r^\top r},
\]
the reduced right-hand side is
\begin{equation}
\label{eq:one_mode_rhs}
\dot z_1
= R^\dagger f(R z_1,u)
= \frac{r^\top f(r z_1,u)}{r^\top r}
= \frac{\sum_{i=1}^n r_i\, f_i(r z_1,u)}{\sum_{i=1}^n r_i^2}.
\end{equation}
It is helpful to first emphasize just the \emph{additive structure}, before worrying about the substitution \(x=r z_1\). Writing the full system schematically as
\begin{equation}
\begin{aligned}
\dot{x}_1 &= f_1(x,u),\\
\dot{x}_2 &= f_2(x,u),\\
&\;\vdots\\
\dot{x}_n &= f_n(x,u),
\end{aligned}
\qquad\Longrightarrow\qquad
\begin{aligned}
\dot{z}_1
=\; &r_1\, f_1(\,\cdot\,)\, + \\ 
&r_2\, f_2(\,\cdot\,)\, + \\ 
&\;\;\; \vdots \\
&r_n\, f_n(\,\cdot\,)
\end{aligned}
\end{equation}
Equation~\eqref{eq:one_mode_rhs} expresses $\dot z_1$ as a linear combination of the coordinate functionals $f_i$, weighted by the dominant balanced mode $r$ (the first column of $T^{-1}$).

\subsubsection*{Step 2: Substitution of the reduced coordinate}
Next we make explicit how \(z_1\) enters those functionals:
\begin{equation*}
\begin{aligned}
x = r\, z_1 \Longrightarrow
f_i(x,u) &\;\mapsto\; f_i(r z_1,u) \\
&\;=\; f_i\!\big([\,r_1 z_1,\; r_2 z_1,\;\dots,\; r_n z_1\,]^\top,\,u\big).
\end{aligned}
\end{equation*}
This shows that every occurrence of an original variable \(x_j\) (e.g., a membrane voltage, a gate, or a synaptic state) is replaced by \(r_j z_1\). \emph{Thus \(r_j\) measures how much the retained mode \(z_1\) “behaves like” the original coordinate \(x_j\)} inside each nonlinearity.

Combining Steps 1 and 2 yields the explicit one-mode reduced dynamics:
\begin{equation}
\begin{aligned}
\dot{z}_1
= \frac{\sum_{i=1}^n r_i\, f_i(r z_1,u)}{\sum_{i=1}^n r_i^2}.
\end{aligned}
\end{equation}

\paragraph{What the \(f_i\) look like in HH (five representatives)}
We record five representative coordinate functionals from the Hodgkin--Huxley network (Appendix~B): the membrane-voltage equation for one neuron, the corresponding gating-logit equations, and an excitatory synaptic-gate equation:
\begin{align*}
\text{(Voltage)}\quad
&f_{\hat V_1}(x,u) \;=\; -\alpha\big(I_{\mathrm{Na},1}+I_{\mathrm{K},1}+I_{\mathrm{L},1} \\ &\quad\quad\quad\quad\quad\quad\quad\;\;\; +I_{\mathrm{syn},1}+I_{\mathrm{ChR},1}\big),\\
\text{(Gate $m$)}\quad
&f_{z_{m,1}}(x,u) \;=\; \frac{a_m(V_1)}{m_1} - \frac{b_m(V_1)}{1-m_1},\\
\text{(Gate $h$)}\quad
&f_{z_{h,1}}(x,u) \;=\; \frac{a_h(V_1)}{h_1} - \frac{b_h(V_1)}{1-h_1},\\
\text{(Gate $n$)}\quad
&f_{z_{n,1}}(x,u) \;=\; \frac{a_n(V_1)}{n_1} - \frac{b_n(V_1)}{1-n_1},\\
\text{(Excit. synapse)}\quad
&f_{s_{E,1}}(x,u) \;=\; \frac{s_\infty(V_1;V_{\theta,E},k_E)-s_{E,1}}{\tau_E}.
\end{align*}

\paragraph{Putting it together}
Restricting \eqref{eq:one_mode_rhs} to the five representative coordinates listed above yields
\begin{equation}
\begin{aligned}
\dot z_1 &\;=\; r_1 f_{\hat V_1}(x,u) \\
&+ r_2 f_{z_{m,1}}(x,u) \\ 
&+ r_3 f_{z_{h,1}}(x,u) \\
&+ r_4 f_{z_{n,1}}(x,u)\\
&+ r_5 f_{s_{E,1}}(x,u) \\
&+ \sum_{i\in\mathcal I_{\mathrm{rest}}} r_i f_i(x,u),
\end{aligned}
\end{equation}
where \(\mathcal I_{\mathrm{rest}}\) indexes the remaining coordinates in the full network.

Under the one-mode embedding \(x \approx r z_1\), each occurrence of a state component in the coordinate functionals is evaluated at its reduced form. For example,
\begin{equation}
\begin{aligned}
V_1 &= E_\mathrm{L} + V_\mathrm{scale}\big(r_{\hat V_1}\, z_1 + \hat V_1^\star\big), \\
m_1 &= \sigma\!\big(r_{z_{m,1}}\, z_1 + z^\star_{m,1}\big), \\
h_1 &= \sigma\!\big(r_{z_{h,1}}\, z_1 + z^\star_{h,1}\big), \\
n_1 &= \sigma\!\big(r_{z_{n,1}}\, z_1 + z^\star_{n,1}\big),
\end{aligned}
\end{equation}
and similarly for all other coordinates. Substituting into the representation above gives
\begin{equation}
\begin{aligned}
\dot z_1 &\;=\; r_1 f_{\hat V_1}(r z_1,u) \\
&+ r_2 f_{z_{m,1}}(r z_1,u) \\
&+ r_3 f_{z_{h,1}}(r z_1,u) \\
&+ r_4 f_{z_{n,1}}(r z_1,u) \\
&+ r_5 f_{s_{E,1}}(r z_1,u) \\
&+ \sum_{i\in\mathcal I_{\mathrm{rest}}} r_i f_i(r z_1,u).
\end{aligned}
\label{eq:final_sum}
\end{equation}

Equations~\eqref{eq:one_mode_rhs}--\eqref{eq:final_sum} show that the one-mode reduction is obtained by evaluating the original coordinate functionals $f_i$ on the rank-one embedding $x \approx r z_1$ and aggregating their contributions with coefficients determined by the dominant balanced mode $r$. The weights $r_i$ quantify each coordinate's contribution to $\dot z_1$, while the substitution $x_j \mapsto r_j z_1$ specifies how the retained coordinate enters the nonlinearities.

\subsubsection*{Connection to certification}

The one-mode derivation above characterizes the reduced dynamics in mechanistic terms: $\dot z_1$ is given by a weighted superposition of the original coordinate functionals evaluated under the embedding $x \mapsto r z_1$.

The certification results, however, concern the induced input--output error operator associated with this reduction.

\begin{figure*}[t]
    \centering
    \includegraphics[
    width=\textwidth,
    height=0.85\textheight,
    keepaspectratio]{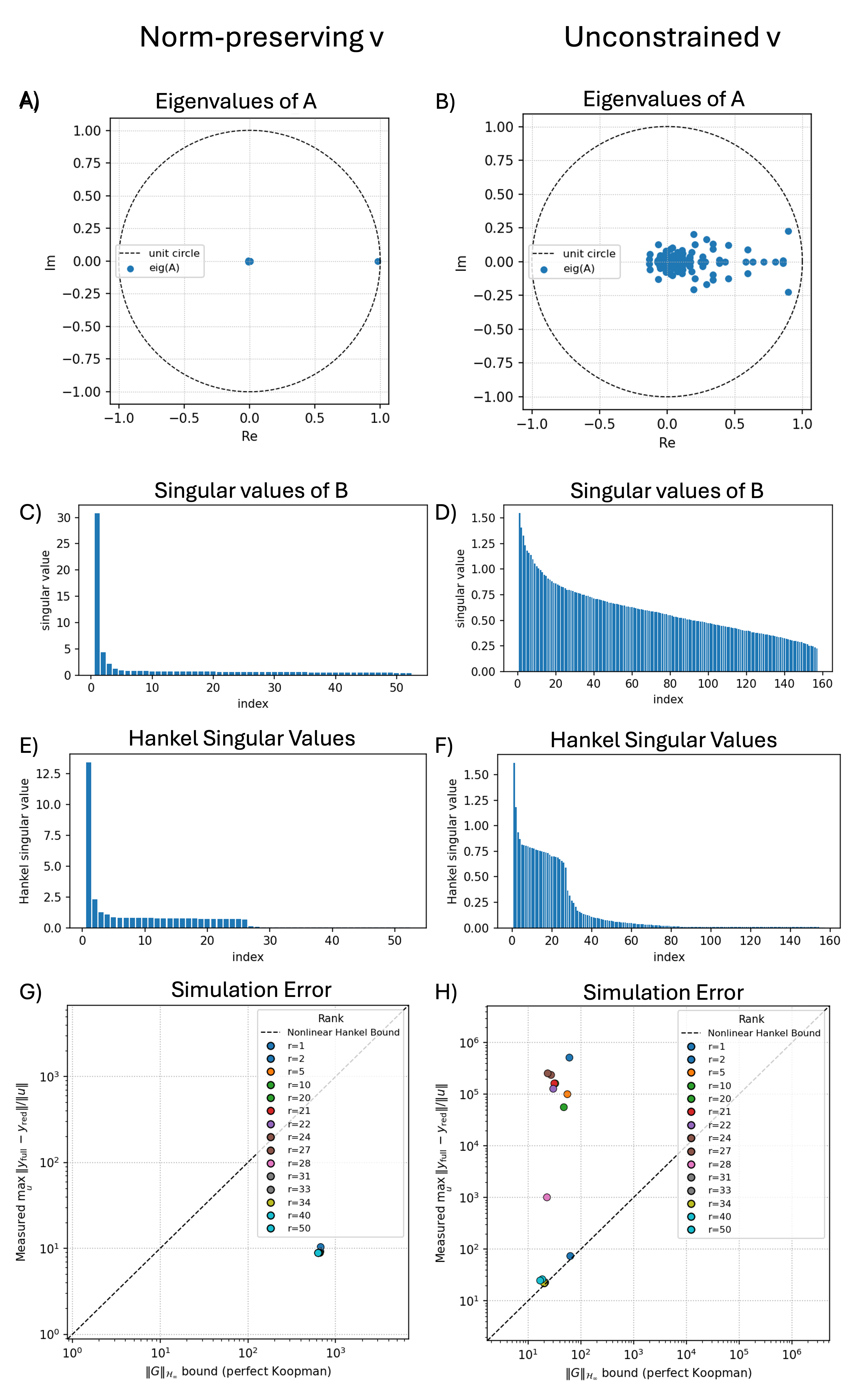}
    \caption{
    \textbf{Norm preservation calibrates the lifted input channel, restoring certified error bounds.}
    Left column: models trained with the pointwise constraint $\|v(x_k,u_k)\|_2=\|u_k\|_2$; right column: models trained with unconstrained $v$.
    (A--B) Eigenvalues of the learned discrete-time matrix $A$ (dashed unit circle) indicate stability in both settings, showing that stability of $A$ alone does not guarantee a meaningful certified reduction.
    (C--D) Singular values of the learned input matrix $B$ reveal a pronounced scale/identifiability difference: when $v$ is norm-preserving, the required actuation gain must be carried by $B$, whereas unconstrained training can hide effective gain inside $v(x,u)$.
    (E--F) Hankel singular values (HSVs) of the lifted LTI surrogate suggest apparent reducibility in both cases, but only the norm-preserving construction ensures that these HSVs correspond to the \emph{physical} input metric.
    (G--H) Certification check: measured simulation error (vertical axis) versus the predicted $\mathcal{H}_\infty$ reduction bound computed from the lifted surrogate (horizontal axis), across multiple reduced orders $r$.
    With norm-preserving $v$ (G), the empirical error remains below the bound (conservative but valid); with unconstrained $v$ (H), the bound can underestimate error by orders of magnitude, demonstrating that without input norm calibration the computed $\mathcal{H}_\infty$ bound is not a certificate with respect to the physical input $\|u\|$.
    }
    \label{fig:hh_certification_spectral}
\end{figure*}

\begin{figure*}[t]
    \centering
    \includegraphics[width=\textwidth]{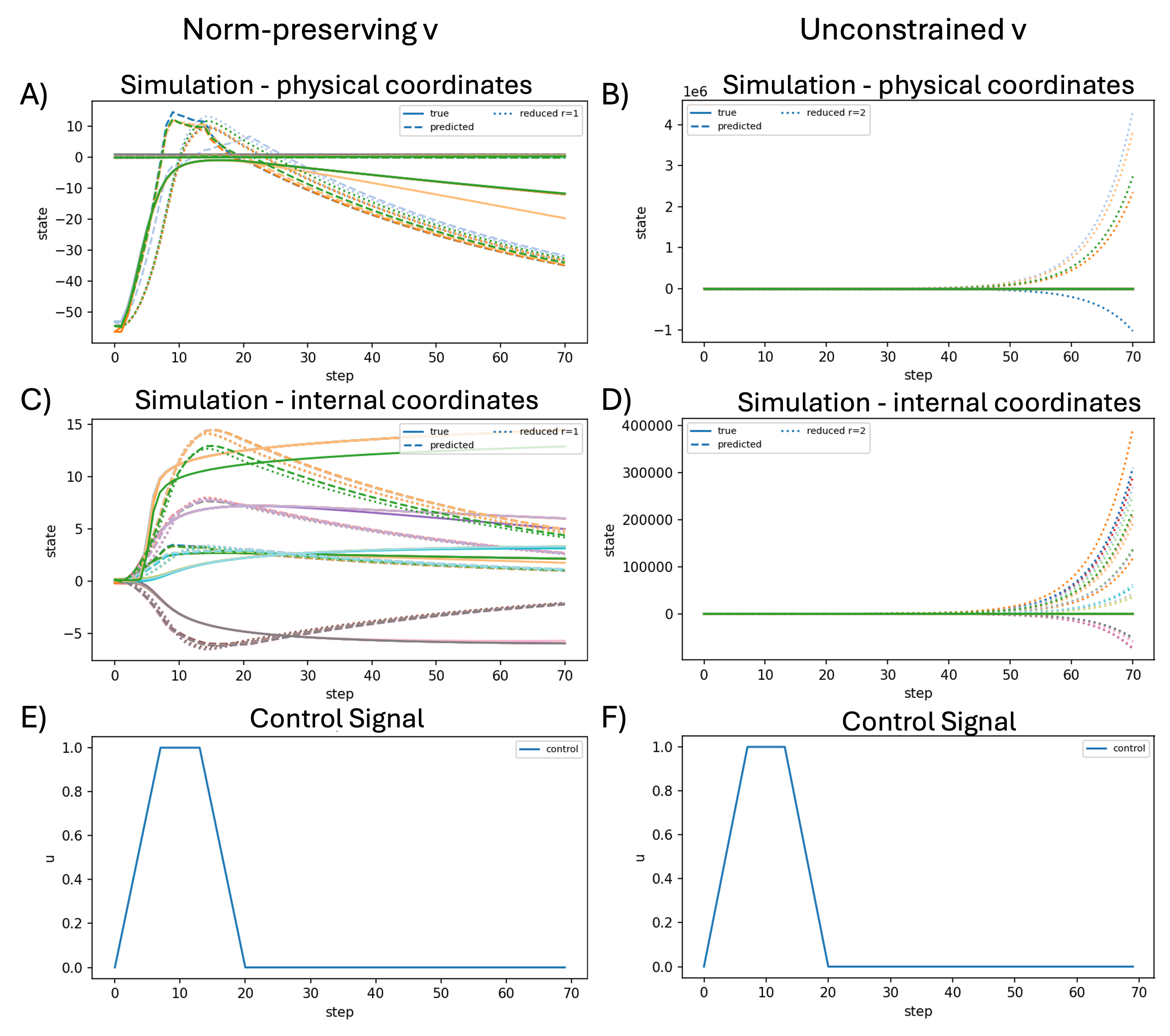}
    \caption{
    \textbf{Time-domain consequence of input norm calibration: stable prediction and meaningful reduction versus gain-induced blow-up.}
    Left column: norm-preserving $v$; right column: unconstrained $v$.
    (A--B) Rollout in physical coordinates under the same bounded control input $u_k$ (shown in (E--F)), comparing ground truth (true), the learned lifted surrogate (predicted), and a rank-$r{=}1$ or $2$ reduced surrogate (reduced $r{=}1$ or $2$).
    Under norm preservation (A), the surrogate and reduced model track the system over the horizon; under unconstrained lifting (B), trajectories diverge catastrophically despite bounded $u_k$, consistent with hidden gain amplification in $v(x,u)$.
    (C--D) Rollout in the internal (lifted) coordinates shows the same effect: norm-preserving lifting yields bounded internal dynamics, while unconstrained lifting drives the internal state to extreme magnitudes.
    (E--F) Control signal used in both experiments.
    Together with Fig.~\ref{fig:hh_certification_spectral}, these results illustrate why enforcing $\|v\|_2=\|u\|_2$ is essential: it fixes the input-energy gauge so that (i) the lifted LTI surrogate remains physically calibrated and (ii) Hankel/BT-based $\mathcal{H}_\infty$ bounds serve as genuine certificates for reduced-order models in the original input metric.
    }
    \label{fig:hh_rollout_time_domain}
\end{figure*}

\section{Conclusion}
We introduced a GSVD-based framework for certified reduction of nonlinear control systems that preserves the input-energy metric required for Hankel- and $\mathcal{H}_\infty$-based guarantees. The key step is an input-energy calibrated lifting $v(x,u)$ satisfying the pointwise constraint $\|v(x,u)\|_2=\|u\|_2$, together with a GSVD construction that represents general (including non-affine) input nonlinearities in an LTI-like lifted form $\dot z = Az + Bv(x,u)$ with constant $A,B$. In this representation, induced-gain quantities computed from the lifted surrogate are expressed in the physical input metric, restoring the interpretation of Hankel singular values as certificate-relevant input--output energy modes.

Under the assumptions of our main results, balanced truncation applied to the lifted surrogate yields an a priori $\mathcal{H}_\infty$ error bound for the reduced nonlinear input--output operator, consisting of the classical HSV tail term together with the additional state-dependent input-channel recomputation contribution quantified in Theorem~2. We validated the resulting certificate behavior on a $25$-state Hodgkin--Huxley network with saturating optogenetic inputs, where the calibrated construction admits stable reduced rollouts and empirically certifying bounds, while an unconstrained lifted input exhibits the metric-mismatch failure mode discussed in the Introduction.

Finally, we showed how approximate Koopman closure error can be treated as a feedback uncertainty, enabling learning-based parameterizations of the lifting while retaining conservative robustness guarantees under a small-gain condition. This connects data-driven lifting architectures to classical robust control tools for certified model reduction and analysis.

\bibliographystyle{IEEEtran}
\bibliography{references}
\appendix

\subsection{Appendix A: Approximate Koopman Representations (Definitions and Proofs)}
\label{appendix:koopman_with_error}

This appendix provides the definitions, intermediate constructions, and proofs supporting the Part~2 result stated in Theorem~\ref{theorem:bound_with_error}.
The section consists primarily of a generalization to systems whose Koopman representations are only approximate in finite dimensions. 

As in the main manuscript, we measure nonlinear input--output gains using the restricted induced $L^2\to L^2$ gain over an admissible input class $\mathcal{U}\subset L^2$:
\[
\|G\|_{H_\infty(\mathcal{U})} \triangleq \sup_{u\in\mathcal{U}\setminus\{0\}} \frac{\|G(u)\|_{L^2}}{\|u\|_{L^2}}.
\]

When $G$ is stable LTI, we use the classical $H_\infty$ norm (equivalently, take $\mathcal{U}=L^2$) and write $\|G\|_{H_\infty}$.
Unless stated otherwise, all gains are evaluated at the equilibrium initial condition $x(0)=0$ (equivalently $\varphi_0(0)=0$ and $z(0)=0$).

\subsubsection{Balanced Coordinates for Nonlinear Systems with Error in the Koopman Representation}

Approximate Koopman closure augments the lifted autonomous dynamics with an additional residual term. Under a state-inclusive lift on the forward-invariant set, this residual is a well-defined function of the lifted state and can be treated as an additional channel acting on the lifted dynamics. The GSVD factorization developed earlier provides a representation in which all gain associated with this residual is carried by a constant matrix, while the remaining nonlinearity preserves energy pointwise. When this channel is combined with the Part~1 calibrated actuation representation, the resulting lifted model has the same structural form needed to (i) interpret closure error as a feedback interconnection and (ii) apply classical balancing and truncation tools to a nominal LTI surrogate.

This subsubsection fixes the lifted and balanced-coordinate representations so that, in the next subsection, the closure residual can be written as a positive-feedback interconnection and controlled via a small-gain gap bound, after which the Part~1 truncation certificate applies directly to the nominal surrogate.

\begin{definition}
\label{def:j_plus}
Let $\mathcal{J}^+$ denote the class of systems satisfying the axioms of $\mathcal{J}$,
except that the Koopman closure holds only up to an additive residual.
That is, there exists a mapping $f_{\mathrm{error}}:\mathbb{R}^q\to\mathbb{R}^q$
with $f_{\mathrm{error}}(0)=0$ and $\|f_{\mathrm{error}}\|_{2\to2}<\infty$ such that
\begin{align}
D\varphi_0(x)\,f(x,0)=A\varphi_0(x)+f_{\mathrm{error}}(\varphi_0(x)),
\qquad \forall x\in\mathcal{X}.
\label{eq:approx_closure_def}
\end{align}
\end{definition}

\begin{remark}
Because the lifting is \emph{state-inclusive} on $\mathcal{X}$, i.e.
\[
\varphi_0(x)=\begin{bmatrix}x\\ \varphi_{\mathrm{lift}}(x)\end{bmatrix}\quad \forall x\in\mathcal{X},
\]
the map $\varphi_0$ is injective on $\mathcal{X}$ and admits a left-inverse
$x=S\phi$ on $\varphi_0(\mathcal{X})$, where $S=\begin{bmatrix}I_{n\times n}&0\end{bmatrix}$.
Consequently, for any $\phi\in\varphi_0(\mathcal{X})$ there is a unique state
$x=S\phi$, and the residual
\[
D\varphi_0(x)f(x,0)-A\varphi_0(x)
\]
can be defined unambiguously as a function of $\phi=\varphi_0(x)$ alone.
Hence $f_{\mathrm{error}}$ is well-defined on $\varphi_0(\mathcal{X})$
(and may be extended arbitrarily to all of $\mathbb{R}^q$ if desired).
\end{remark}

\begin{observation}[SVD-like factorization of the closure residual]
\label{obs:closure_residual_factorization}
By \eqref{eq:approx_closure_def}, the residual satisfies
\begin{align}
f_{\mathrm{error}}(\varphi_0(x)) \triangleq D\varphi_0(x)\, f(x,0) - A \varphi_0(x).
\label{eq:ferr_def}
\end{align}
Moreover, since $x(0)=0$ implies $\varphi_0(0)=0$ and $f(0,0)=0$, we have $f_{\mathrm{error}}(0)=0$.

Applying Corollary~\ref{cor:extremal_direction_coordinates} to the map
$f_{\mathrm{error}}:\mathbb{R}^q\to\mathbb{R}^q$ yields an orthogonal matrix
$U_{\mathrm{error}}\in\mathbb{R}^{q\times q}$, a rectangular diagonal matrix
$\Sigma_{\mathrm{error}}\in\mathbb{R}^{q\times 2q}$, and a mapping
$v_{\mathrm{error}}:\mathbb{R}^{q}\to\mathbb{R}^{2q}$ such that
\begin{align}
f_{\mathrm{error}}(\varphi_0) &= U_{\mathrm{error}} \Sigma_{\mathrm{error}}\, v_{\mathrm{error}}(\varphi_0),
\\
\|v_{\mathrm{error}}(\varphi_0)\|_2&=\|\varphi_0\|_2.
\end{align}
Defining $D_{\mathrm{error}}\triangleq U_{\mathrm{error}}\Sigma_{\mathrm{error}}$, we equivalently have
\begin{align}
D\varphi_0(x)\,f(x,0)=A\varphi_0(x)+D_{\mathrm{error}}v_{\mathrm{error}}(\varphi_0(x)).
\end{align}
\end{observation}

\begin{corollary}[LTI-like lifted representation under approximate Koopman closure]
\label{cor:lti_like_with_err}
For every system $G\in\mathcal{J}^+$ there exist matrices
$A\in\mathbb{R}^{q\times q}$ (Hurwitz), $B\in\mathbb{R}^{q\times(p+q)}$, $C\in\mathbb{R}^{m\times q}$,
and $D_{\mathrm{error}}\in\mathbb{R}^{q\times 2q}$, together with mappings
\[
v:\mathcal{X}\times\mathbb{R}^p\to\mathbb{R}^{p+q},
\qquad
v_{\mathrm{error}}:\mathbb{R}^q\to\mathbb{R}^{2q},
\]
satisfying
\begin{align}
\|v(x,u)\|_2&=\|u\|_2, \quad \forall x\in\mathcal{X},\ u\in\mathbb{R}^p, 
\label{eq:v_norm_preserve_u_appx}
\\
\|v_{\mathrm{error}}(\varphi_0(x))\|_2&=\|\varphi_0(x)\|_2, \quad \forall x\in\mathcal{X},
\label{eq:verr_norm_preserve_phi_appx}
\end{align}
such that $G$ admits the lifted representation
\begin{equation}
\begin{aligned}
D\varphi_0(x)\,f(x,u) &= A \varphi_0(x) + D_{\mathrm{error}}v_{\mathrm{error}}(\varphi_0(x)) + B\,v(x,u), \\
y &= C \varphi_0(x),
\end{aligned}
\label{eq:lifted_rep_with_err}
\end{equation}
for all $x\in\mathcal{X}$ and $u\in\mathbb{R}^p$.
Moreover, if $B$ is chosen full row rank, then defining
\begin{align}
v'(x,u)\triangleq v(x,u)+B^\dagger D_{\mathrm{error}}v_{\mathrm{error}}(\varphi_0(x))
\label{eq:vprime_def}
\end{align}
yields the equivalent form
\begin{equation}
\begin{aligned}
D\varphi_0(x)\,f(x,u)&=A\varphi_0(x)+Bv'(x,u),\\ y&=C\varphi_0(x).
\end{aligned}
\end{equation}
\end{corollary}

\begin{proof}
The approximate Koopman closure is \eqref{eq:approx_closure_def}. By
Observation~\ref{obs:closure_residual_factorization}, the residual admits the representation
\[
D\varphi_0(x)\,f(x,0)=A\varphi_0(x)+D_{\mathrm{error}}v_{\mathrm{error}}(\varphi_0(x)).
\]
The control-induced term is handled exactly as in the proof of Theorem~\ref{theorem:norm_preserving}:
define $f_u(x,u)\triangleq f(x,u)-f(x,0)$ and $g(x,u)\triangleq D\varphi_0(x)\,f_u(x,u)$, so that
\[
D\varphi_0(x)\,f(x,u)=D\varphi_0(x)\,f(x,0)+g(x,u).
\]
Since $f$ is Lipschitz in $u$ uniformly over $x\in\mathcal X$ and $D\varphi_0$ is bounded on $\mathcal X$,
the map $(x,u)\mapsto g(x,u)$ has finite induced gain in $u$ on $\mathcal X$.
Applying Lemma~\ref{lem:norm_preserve_u} yields a constant matrix $B$ and a pointwise norm-preserving map $v(x,u)$
such that $g(x,u)=Bv(x,u)$ and $\|v(x,u)\|_2=\|u\|_2$.
Substituting into the lifted split gives \eqref{eq:lifted_rep_with_err}.
If $B$ is full row rank, the absorption step \eqref{eq:vprime_def} yields the final equivalent form.
\end{proof}

\begin{remark}[Balanced lifted coordinates]
Let $T$ be a balancing transform for the associated LTI surrogate, and define
\begin{equation}
\begin{aligned}
z \triangleq T\varphi_0(x),\qquad
\tilde A \triangleq &TAT^{-1},\\
\tilde B \triangleq TB,\qquad
\tilde C \triangleq CT^{-1},\qquad
&\tilde D_{\mathrm{error}} \triangleq T D_{\mathrm{error}}.
\end{aligned}
\end{equation}
By state-inclusivity, there exists $S=\begin{bmatrix}I_{n\times n}&0\end{bmatrix}$ such that
$x=S\varphi_0(x)$, hence $x=Rz$ with $R\triangleq ST^{-1}$.
Define
\[
\tilde v(z,u)\triangleq v(Rz,u),
\qquad
\tilde v_{\mathrm{error}}(z)\triangleq v_{\mathrm{error}}(T^{-1}z).
\]
Then $\|\tilde v(z,u)\|_2=\|u\|_2$ pointwise, and since
$\|v_{\mathrm{error}}(\eta)\|_2=\|\eta\|_2$ for all $\eta\in\mathbb{R}^q$, we have
\[
\|\tilde v_{\mathrm{error}}(z)\|_2
=\|v_{\mathrm{error}}(T^{-1}z)\|_2
=\|T^{-1}z\|_2
\le \|T^{-1}\|_{2\to2}\,\|z\|_2.
\]
Along trajectories,
\[
\dot z=\tilde A z + \tilde D_{\mathrm{error}}\tilde v_{\mathrm{error}}(z)+\tilde B\,\tilde v(z,u),
\qquad
y=\tilde C z.
\]
We use this balanced-coordinate representation throughout Appendix~\ref{appendix:koopman_with_error}.
\end{remark}

\subsubsection{Modeling Error as Feedback}
The following results treat modeling error in the Koopman representation of the autonomous dynamics through a robust-control lens. The closure residual is represented as a memoryless uncertainty block interconnected with a nominal lifted plant in positive feedback. 
The analysis below uses small-gain conditions to guarantee the feedback interconnection is well-posed and finite-gain stable.

The small-gain condition is a standard sufficient condition for well-posedness and finite-gain stability of a (positive) feedback interconnection. In particular, for two causal operators $G_1$ and $G_2$ with finite induced $L^2\!\to\!L^2$ gains, the interconnection is well-posed whenever \begin{align} \|G_1G_2\|_{H_\infty} < 1, \end{align} in which case the inverse $(I-G_1G_2)^{-1}$ exists as a bounded causal operator (see, e.g., \cite{dullerud2000chapter4}).

A related canonical result bounds the induced-norm gap between the positive-feedback interconnection and the corresponding open-loop plant with feedback removed. This upper bound (reviewed below) is used to control the error introduced by removing and then reattaching the closure-error channel.
\begin{observation}[Small-gain gap bound under positive feedback]
\label{observation:feedback_bound}
Let $G_1$ be a stable linear (LTI) operator and let $G_2$ be a stable causal operator, both mapping $L^2\to L^2$ with finite induced gains (which we denote by $\|\cdot\|_{H_\infty}$), such that
\begin{equation}
\begin{aligned}
    y_1 &= G_1(u_1), \\
    y_2 &= G_2(u_2),
\end{aligned}
\end{equation}
and the small-gain condition is met:
\begin{align}
      \bigl\|\,G_1 G_2\,\bigr\|_{H_\infty} < 1.
\end{align}
Close the loop \emph{positively} by setting:
\begin{equation}
\begin{aligned}
    u_1 &= y_2 , \\
    u_2 &= y_1 ,
\end{aligned}
\end{equation}
and attach an external signal $r$ additively to $u_1$ 
(the particular injection point of $r$ is arbitrary and does not affect the bound).  

The closed-loop map from $r$ to $y_1$ is:
\begin{align}
    M = \bigl(I - G_1\,G_2\bigr)^{-1}\,G_1.
\end{align}
Then
\begin{align}
\|M - G_1 \|_{H_\infty}
\le
\frac{\|G_1G_2\|_{H_\infty}\,\|G_1\|_{H_\infty}}{1 - \|G_1 G_2\|_{H_\infty}}.
\end{align}
\end{observation}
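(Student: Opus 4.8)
The plan is to reduce the whole statement to one algebraic identity plus the Neumann-series estimate for the resolvent $(I - G_1 G_2)^{-1}$. First I would record the ``push-through'' manipulation, valid formally whenever the inverse exists:
\begin{align}
M - G_1 &= (I - G_1 G_2)^{-1} G_1 - G_1 \\
        &= (I - G_1 G_2)^{-1}\bigl[G_1 - (I - G_1 G_2) G_1\bigr] \\
        &= (I - G_1 G_2)^{-1}\,G_1 G_2 G_1 .
\end{align}
This is the key structural observation: the ``gap'' $M - G_1$ factors through $(G_1 G_2)$ once, which is exactly what will produce the numerator $\|G_1\|_{H_\infty}^2\|G_2\|_{H_\infty}$ and let the small-gain hypothesis do the rest.

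Second, I would use the small-gain hypothesis $\|G_1 G_2\|_{H_\infty} < 1$ to make the resolvent a genuine bounded operator with a geometric-series bound. Since $\|\cdot\|_{H_\infty}$ is the induced $L_2 \to L_2$ operator norm, it is submultiplicative, so the Neumann series $\sum_{k\ge 0}(G_1 G_2)^k$ converges in operator norm to $(I - G_1 G_2)^{-1}$ and
\begin{align}
\bigl\|(I - G_1 G_2)^{-1}\bigr\|_{H_\infty} \;\le\; \sum_{k=0}^{\infty}\|G_1 G_2\|_{H_\infty}^{k} \;=\; \frac{1}{1 - \|G_1 G_2\|_{H_\infty}} .
\end{align}
This simultaneously confirms that $M$, and hence $M - G_1$, has finite $H_\infty$ norm, so the left-hand side of the claimed inequality is well defined.

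Third, I would combine the two: apply $\|\cdot\|_{H_\infty}$ to the push-through identity, use submultiplicativity twice on the product $G_1 G_2 G_1$ to get $\|G_1 G_2 G_1\|_{H_\infty} \le \|G_1\|_{H_\infty}^2\|G_2\|_{H_\infty}$, and multiply by the resolvent bound from the previous step:
\begin{align}
\|M - G_1\|_{H_\infty} \;\le\; \bigl\|(I - G_1 G_2)^{-1}\bigr\|_{H_\infty}\,\|G_1 G_2 G_1\|_{H_\infty} \;\le\; \frac{\|G_1\|_{H_\infty}^2\,\|G_2\|_{H_\infty}}{1 - \|G_1 G_2\|_{H_\infty}} .
\end{align}

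The step I would be most careful about — the ``hard part'', though it is standard — is the resolvent estimate: justifying that $(I - G_1 G_2)^{-1}$ is not merely a formal inverse but a bounded operator on $L_2$ (equivalently, that the positive feedback loop is internally stable and $M$ is proper and stable), and that its norm obeys the geometric bound. Once the Neumann-series argument is in place this is immediate, and it also pins down the bookkeeping that the map $r \mapsto y_1$ from the loop equations $u_1 = y_2 + r$, $u_2 = y_1$ is exactly $M = (I - G_1 G_2)^{-1}G_1$. Everything else — the algebraic identity, submultiplicativity, summing the geometric series — is routine.
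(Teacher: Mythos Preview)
Your proof is correct and matches the paper's argument essentially step for step: the same factorization $M - G_1 = (I - G_1 G_2)^{-1} G_1 G_2 G_1$, the same small-gain resolvent bound $\|(I - G_1 G_2)^{-1}\|_{H_\infty} \le (1 - \|G_1 G_2\|_{H_\infty})^{-1}$, and the same submultiplicativity conclusion. The only cosmetic difference is that you spell out the Neumann series explicitly where the paper simply invokes ``standard small-gain arguments.''
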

\begin{proof}
Let $y_1$ denote the closed-loop output corresponding to input $r$.
Under the positive-feedback interconnection with additive injection at $u_1$, we have
\[
u_1 = r + y_2,\qquad u_2 = y_1,\qquad y_1 = G_1(u_1),\qquad y_2 = G_2(u_2),
\]
and therefore
\begin{align}
y_1
= G_1(r + y_2)
= G_1\bigl(r + G_2(y_1)\bigr).
\end{align}
Let $y_{1,0} \triangleq G_1(r)$ denote the output with the feedback removed, and define the difference
\begin{align}
e \triangleq y_1 - y_{1,0}.
\end{align}
Since $G_1$ is linear,
\begin{align}
e
= G_1\bigl(r + G_2(y_1)\bigr) - G_1(r)
= G_1\bigl(G_2(y_1)\bigr).
\end{align}
Hence, by the induced-gain definition,
\begin{align}
\|e\|_{L^2} \le \|G_1 G_2\|_{H_\infty}\, \|y_1\|_{L^2}.
\end{align}
Also,
\begin{align}
\|y_1\|_{L^2}
\le \|y_{1,0}\|_{L^2} + \|e\|_{L^2}
\le \|G_1\|_{H_\infty}\|r\|_{L^2} + \|e\|_{L^2}.
\end{align}
Combining gives
\begin{align}
\|e\|_{L^2}
\le \|G_1 G_2\|_{H_\infty}\bigl(\|G_1\|_{H_\infty}\|r\|_{L^2} + \|e\|_{L^2}\bigr),
\end{align}
so if $\|G_1 G_2\|_{H_\infty}<1$,
\begin{align}
\|e\|_{L^2}
\le
\frac{\|G_1 G_2\|_{H_\infty}\,\|G_1\|_{H_\infty}}{1-\|G_1 G_2\|_{H_\infty}}\,\|r\|_{L^2}.
\end{align}
Since $y_1 = Mr$ and $y_{1,0}=G_1 r$, we have $e=(M-G_1)r$, and the claimed induced-gain bound follows.
\end{proof}

The closure residual enters the lifted dynamics as a state-dependent additive channel and therefore admits a feedback interpretation. The next lemma makes this interconnection explicit, which allows Observation~\ref{observation:feedback_bound} to be applied before truncation and again after reduction. The lemma is stated with identity output; output-map differences are bounded separately later.

\begin{lemma}
\label{lemma:error_as_feedback}
For any system, $M \in \mathcal{J}^+$ that admits a state space representation with identity output:
\begin{equation}
\label{eq:linear_system_rep}
\begin{aligned}
\dot{z} &= \tilde{A} z + \tilde{B} \tilde{v}'(z,u) \\
y &= I z,
\end{aligned}
\end{equation}
with $\tilde{v}'(z,u)$ defined as:
\begin{equation}
\begin{aligned}
\tilde{v}'(z,u) \triangleq \tilde{v}(z,u) + \tilde{B}^{\dagger} \tilde{D}_{\mathrm{error}} \tilde{v}_{\mathrm{error}}(z),
\end{aligned}
\end{equation}
there exist systems $G_1$ and $G_2$ such that $M$ is equivalent to their positive closed-loop interconnection.
In particular, let $G_1$ be the stable LTI system with exogenous input $r$ and feedback input $v_1$:
\begin{equation}
\begin{aligned}
\dot{z}_1 &= \tilde{A} z_1 + \tilde{B}(r + v_1) \\
y_1 &= I z_1,
\end{aligned}
\end{equation}
and let $G_2$ be the memoryless operator
\begin{align}
y_2 = \tilde{B}^{\dagger}\tilde{D}_{\mathrm{error}} \tilde{v}_{\mathrm{error}}(u_2).
\end{align}
Under the positive-feedback interconnection $v_1 = y_2$ and $u_2 = y_1$, the resulting closed-loop map $r \mapsto y_1$ is $M$.
Moreover, specializing $r \triangleq \tilde{v}(z_1,u)$ recovers the original realization of $M$ in terms of the control input $u$.
\end{lemma}
\begin{proof}
Direct substitution verifies the claim: under $v_1=y_2$ and $u_2=y_1$, the $G_1$ state equation becomes
$\dot z_1=\tilde A z_1+\tilde B(\tilde v(z_1,u)+\tilde B^\dagger \tilde D_{\mathrm{error}}\tilde v_{\mathrm{error}}(z_1))$,
which is exactly \eqref{eq:linear_system_rep} with identity output. The specialization
$r\triangleq \tilde v(z_1,u)$ recovers the original realization.
\end{proof}

\begin{corollary}
For any system $M \in \mathcal{J}^+$, expressed as the closed-loop feedback between two systems, $G_1$ and $G_2$ as defined in Lemma \ref{lemma:error_as_feedback}, if $G_1$ and $G_2$ satisfy the small-gain condition, namely
\begin{align}
\| G_1 G_2 \|_{H_\infty} < 1,
\end{align}
then the error between $M$ and $G_1$ is bounded as:
\begin{align}
\|M - G_1\|_{H_\infty} \le
\frac{\|G_1G_2\|_{H_\infty}\,\|G_1\|_{H_\infty}}{1-\|G_1G_2\|_{H_\infty}}.
\end{align}
\end{corollary}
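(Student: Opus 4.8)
The plan is to obtain the bound directly from Observation~\ref{observation:feedback_bound} using the interconnection supplied by Lemma~\ref{lemma:error_as_feedback}, with one adjustment to accommodate the fact that the feedback block $G_2$ is here a static nonlinearity rather than an LTI transfer function.

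First I would fix the operator setting. By Lemma~\ref{lemma:error_as_feedback}, $M$ equals the map $r\mapsto y_1$ of the positive feedback loop in which the external reference $r\triangleq\tilde v(z_1,u)$ enters the summing junction of $G_1$ and the loop is closed by $v_1=y_2$, $u_2=y_1$. Viewed from that summing junction, $G_1$ is the LTI operator $w\mapsto y_1$ generated by $\dot z_1=\tilde A z_1+\tilde B w$, $y_1=z_1$; since $\tilde A$ is Hurwitz this operator lies in $H_\infty$, and because $\tilde v$ is point-wise norm-preserving in $u$ its induced $L_2$ gain coincides with the $\|G_1\|_{H_\infty}$ of the statement. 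The block $G_2$ is the memoryless map $u_2\mapsto\tilde B^{\dagger}\tilde D_{\mathrm{error}}\tilde v_{\mathrm{error}}(u_2)$, whose $L_2\to L_2$ gain is finite (bounded by $\|\tilde B^{\dagger}\tilde D_{\mathrm{error}}\|_2$, using point-wise norm preservation of $\tilde v_{\mathrm{error}}$ and $\tilde v_{\mathrm{error}}(0)=0$). The closed-loop relation reads $y_1=G_1(r+G_2 y_1)$, i.e. $(I-G_1G_2)\,y_1=G_1 r$, and the hypothesis $\|G_1G_2\|_{H_\infty}<1$ makes $I-G_1G_2$ boundedly invertible on $L_2$ through the contraction (Picard) iteration underlying the Small Gain Theorem, which needs no linearity of $G_2$, with $\|(I-G_1G_2)^{-1}\|_{L_2\to L_2}\le(1-\|G_1G_2\|_{H_\infty})^{-1}$. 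Hence $M=(I-G_1G_2)^{-1}G_1$ and $\|M\|_{H_\infty}\le\|G_1\|_{H_\infty}/(1-\|G_1G_2\|_{H_\infty})$.

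Next I would record a gap identity insensitive to nonlinearity of $G_2$. Using only linearity of $G_1$, expanding $M(r)=G_1(r+G_2M(r))$ gives $M(r)=G_1(r)+G_1G_2M(r)$, so $M-G_1=G_1G_2M$ --- the substitute for the manipulation $M-G_1=(I-G_1G_2)^{-1}G_1G_2G_1$ of Observation~\ref{observation:feedback_bound}, which relied on $G_2$ commuting with the resolvent. Sub-multiplicativity of the induced gain together with the bound on $\|M\|_{H_\infty}$ then gives
\begin{align}
\|M-G_1\|_{H_\infty} \le \|G_1\|_{H_\infty}\,\|G_2\|_{H_\infty}\,\|M\|_{H_\infty} \le \frac{\|G_1\|_{H_\infty}^{2}\,\|G_2\|_{H_\infty}}{1-\|G_1G_2\|_{H_\infty}},
\end{align}
which is the claimed bound.

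The step I expect to require the most care is the bookkeeping around what ``$G_1$'' denotes once $\tilde v(z_1,u)$ has been promoted to the external reference: one must verify that the block inside the resolvent, the block whose square appears in the numerator, and the $H_\infty$ quantity in the statement are the same LTI operator (they are, since the $u$-to-$y_1$ and $\tilde v$-to-$y_1$ gains of $G_1$ agree by norm preservation), and that every $\|\cdot\|_{H_\infty}$ is read as an induced $L_2\to L_2$ gain so that the Small-Gain resolvent estimate and sub-multiplicativity survive a nonlinear $G_2$. Beyond this, the argument is a direct transcription of Observation~\ref{observation:feedback_bound}.
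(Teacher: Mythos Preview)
Your proposal is correct and follows the same route as the paper, which disposes of the corollary in a single line: ``This follows directly from Observation~\ref{observation:feedback_bound}.'' Your additional work---re-deriving the gap via $M-G_1=G_1G_2M$ using only linearity of $G_1$, and invoking the nonlinear Small Gain resolvent estimate---patches a point the paper leaves implicit, namely that Observation~\ref{observation:feedback_bound} is stated for LTI $G_1,G_2$ whereas here $G_2$ is a static nonlinearity; your care on this front is well placed and does not change the argument's substance.
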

\begin{proof}
This follows directly from Observation \ref{observation:feedback_bound}.
\end{proof}
\begin{definition}
\label{def:def_of_xi}
Let $\xi(\cdot,\cdot)$ denote the small-gain gap bound, defined by
\begin{align}
\xi(G_1,G_2)
\triangleq
\frac{\|G_1G_2\|_{H_\infty}\,\|G_1\|_{H_\infty}}{1-\|G_1G_2\|_{H_\infty}}.
\end{align}
\end{definition}
\begin{remark}
Since $\|\,\cdot\,\|_{H_\infty(\mathcal{U})}\le \|\,\cdot\,\|_{H_\infty}$ for any $\mathcal{U}\subset L^2$, the quantity $\xi(G_1,G_2)$ also upper-bounds the corresponding restricted induced gap on any admissible input class for the reference signal.
\end{remark}

\begin{definition}
\label{def:embedded_systems}
For a control system $G \in \mathcal{J}^+$, with state space representation:
\begin{equation}
\begin{aligned}
\dot{z} &= \tilde{A} z + \tilde{B} \tilde{v}'(z,u) \\
y &= \tilde{C} z,
\end{aligned}
\end{equation}
let $G^I$, $G^P$, and $G^E$ be related systems defined as follows.
Let the state space for $G^I$ be denoted as:
\begin{equation}
\begin{aligned}
\dot{z}_I &= \tilde{A} z_I + \tilde{B} \tilde{v}'(z_I,u) \\
y_I &= I z_I.
\end{aligned}
\end{equation}
Let the state space for $G^P$ be denoted as:
\begin{equation}
\begin{aligned}
\dot{z}_P &= \tilde{A} z_P + \tilde{B}(\tilde{v}(z_P,u) + v_p) \\
y_P &= I z_P.
\end{aligned}
\end{equation}
And finally, let the state space for $G^E$ be denoted as:
\begin{equation}
\begin{aligned}
y_E &= \tilde{B}^{\dagger} \tilde{D}_{\mathrm{error}} \tilde{v}_{\mathrm{error}}(z_E).
\end{aligned}
\end{equation}
\end{definition}
\begin{remark}
Note, as in Lemma \ref{lemma:error_as_feedback}, that $G^P$ in positive closed-loop feedback with $G^E$ yields $G^I$.
The final bound derived in this section will separately bound $\|G - G^I\|_{H_\infty}$ and \mbox{$\|G^I - G^P\|_{H_\infty}$}.
\end{remark}

The subsequent estimates treat the associated LTI surrogate as an $L^2\!\to\!L^2$ operator driven by the endogenous signal $\tilde v'(\cdot)$. The small-gain hypothesis ensures the closed-loop lifted state lies in $L^2$, and the calibration properties then force $\tilde v'(\cdot)\in L^2$ whenever $u(\cdot)\in L^2$. This is stated formally in the next corollary.



\begin{corollary}[Conditions for $\tilde{v}'$ to be in $L^2$]
\label{corollary:error_as_feedback}
For system any $G \in \mathcal{J}^+$, if its related systems, $G^P$ and $G^E$ satisfy the small gain condition,
\begin{align}
\|G^P G^E\|_{H_{\infty}} < 1,
\end{align}
then for any $u(t) \in L^2$, the resulting $\tilde{v}'(t)$ for $G$ will also be an element of $L^2$.
\end{corollary}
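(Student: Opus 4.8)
The plan is to peel $\tilde v'$ into its two additive pieces, dispose of the control piece immediately through norm preservation, and reduce the error piece to square-integrability of the state trajectory $z(\cdot)$, which the small-gain hypothesis is designed to supply. First I would write, along any trajectory, $\tilde v'(z(t),u(t)) = \tilde v(z(t),u(t)) + \tilde B^{\dagger}\tilde D_{\mathrm{error}}\tilde v_{\mathrm{error}}(z(t))$. The first summand is point-wise norm-preserving in $u$, so $\|\tilde v(\cdot,u(\cdot))\|_{L^2} = \|u\|_{L^2} < \infty$ (the corollary following Definition~\ref{def:linear_in_v}). For the second summand, $\|\tilde B^{\dagger}\tilde D_{\mathrm{error}}\tilde v_{\mathrm{error}}(z(t))\|_2 \le \|\tilde B^{\dagger}\tilde D_{\mathrm{error}}\|_2\,\|\tilde v_{\mathrm{error}}(z(t))\|_2 = \|\tilde B^{\dagger}\tilde D_{\mathrm{error}}\|_2\,\|z(t)\|_2$, using $\|\tilde v_{\mathrm{error}}(z)\|_2 = \|z\|_2$. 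Hence it suffices to show that $u \in L^2$ forces $z \in L^2$.

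Next I would recognize the state dynamics as the feedback loop of Lemma~\ref{lemma:error_as_feedback}. Since $\tilde B$ is chosen full row rank, $\tilde B\tilde B^{\dagger} = I$, so $\dot z = \tilde A z + \tilde B\tilde v(z,u) + \tilde D_{\mathrm{error}}\tilde v_{\mathrm{error}}(z)$; this right-hand side is Lipschitz in $z$ because the only non-Lipschitz ingredients, the $v_{\mathrm{kernel}}$-type terms of $\tilde v$ and $\tilde v_{\mathrm{error}}$, are annihilated by $\tilde B$ and by $\tilde D_{\mathrm{error}}$ respectively, so local solutions exist and are unique. By Lemma~\ref{lemma:error_as_feedback} and Definition~\ref{def:embedded_systems} this system is exactly $G^P$ in positive feedback with $G^E$, driven by the signal $r(t) \triangleq \tilde v(z(t),u(t))$. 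The point to stress is that, although $r$ is formally assembled from $z$, the opening computation already gives $\|r\|_{L^2} = \|u\|_{L^2}$, so $r$ behaves as a genuine exogenous $L^2$ input whose size is pinned by $u$ alone, independent of the state.

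Then I would invoke the Small Gain Theorem recalled in this section. With $u = 0$, norm preservation of $\tilde v$ forces $\tilde v(z_P,0) = 0$, so $G^P$ collapses to the Hurwitz LTI map $v_p \mapsto (sI-\tilde A)^{-1}\tilde B v_p$ of finite $H_\infty$-norm, while $G^E$ is memoryless with gain at most $\|\tilde B^{\dagger}\tilde D_{\mathrm{error}}\|_2$; their composition is $G^P G^E$, and $\|G^P G^E\|_{H_\infty} \le 1$ by hypothesis. The small-gain argument then yields well-posedness of the interconnection on $[0,\infty)$ (no finite escape time) together with a finite $L^2$-gain from $r$ to $z$, and since $r \in L^2$ we conclude $z \in L^2$. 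Feeding this back into the reduction above, $\tilde v_{\mathrm{error}}(z) \in L^2$, hence $\tilde B^{\dagger}\tilde D_{\mathrm{error}}\tilde v_{\mathrm{error}}(z) \in L^2$, and therefore $\tilde v' = \tilde v(z,u) + \tilde B^{\dagger}\tilde D_{\mathrm{error}}\tilde v_{\mathrm{error}}(z) \in L^2$, as claimed.

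The hard part will be making the feedback step airtight: (i) establishing global well-posedness of the nonlinear interconnection under the small-gain condition --- local Lipschitz existence plus the a priori $L^2$ estimate bootstrapped to exclude finite escape --- and (ii) handling the borderline hypothesis $\|G^P G^E\|_{H_\infty} \le 1$, whereas the standard small-gain bound wants the strict inequality $< 1$; I would either tighten the statement to a strict inequality or run a continuity argument in the loop gain. A secondary detail worth spelling out is that $G^P$ depends on $u$ only through the norm-preserving $\tilde v$, which makes all $L^2$ estimates uniform in the state and renders the exogenous-versus-internal status of $r$ harmless.
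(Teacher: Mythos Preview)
Your proposal is correct and follows essentially the same route as the paper: decompose $\tilde v'$ into its norm-preserving control part and its error part, reduce the latter to $z\in L^2$, and obtain $z\in L^2$ from the small-gain condition applied to the $G^P$/$G^E$ feedback loop. The paper's proof is actually less careful than yours---it silently uses the strict inequality $\|G^PG^E\|_{H_\infty}<1$ despite the statement reading $\le 1$, and it does not address well-posedness---so the two points you flag as ``the hard part'' are genuine gaps in the paper's own argument rather than in your plan.
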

\begin{proof}
Fix any input $u(\cdot)\in L^2$.
Consider the positive-feedback interconnection of the sister systems $G^P$ and $G^E$
(from Definition~\ref{def:embedded_systems}), with an external signal $r$ injected additively at the $G^P$ input channel
(as in Observation~\ref{observation:feedback_bound}).
Let $z(\cdot)$ denote the resulting lifted state (so $y_P(\cdot)=z(\cdot)$ since $G^P$ uses identity output).

Define the injected signal by
\[
r(t)\triangleq \tilde v(z(t),u(t)).
\]
By the pointwise calibration $\|\tilde v(z,u)\|_2=\|u\|_2$, we have $r\in L^2$ and $\|r\|_{L^2}=\|u\|_{L^2}$.

Under the small-gain condition $\|G^P G^E\|_{H_\infty}<1$, the feedback interconnection is finite-gain stable.
In particular, for any $r\in L^2$ the closed-loop lifted state satisfies $z\in L^2$.

Now define the feedback contribution
\[
e(t)\triangleq G^E(z(t))=\tilde B^\dagger \tilde D_{\mathrm{error}}\,\tilde v_{\mathrm{error}}(z(t)).
\]
Using $\|\tilde v_{\mathrm{error}}(z)\|_2=\|T^{-1}z\|_2\le \|T^{-1}\|_{2\to2}\|z\|_2$ and submultiplicativity,
\begin{equation}
\begin{aligned}
\|e(t)\|_2
&\le
\|\tilde B^\dagger \tilde D_{\mathrm{error}}\|_{2\to2}\,\|\tilde v_{\mathrm{error}}(z(t))\|_2 \\
&\le
\|\tilde B^\dagger \tilde D_{\mathrm{error}}\|_{2\to2}\,\|T^{-1}\|_{2\to2}\,\|z(t)\|_2,
\end{aligned}
\end{equation}
so $e\in L^2$ whenever $z\in L^2$.

Finally, along trajectories,
\[
\tilde v'(t)=\tilde v(z(t),u(t))+\tilde B^\dagger \tilde D_{\mathrm{error}}\,\tilde v_{\mathrm{error}}(z(t))
= r(t)+e(t),
\]
so $\tilde v' \in L^2$.
\end{proof}

\begin{remark}
Under the small-gain condition, the trajectory-generated signal $\tilde v'(\cdot)$ belongs to $L^2$ whenever $u(\cdot)\in L^2$. Consequently, the associated LTI surrogate $G^L$ may be evaluated on $\tilde v'(\cdot)$ as an $L^2\!\to\!L^2$ operator (with equilibrium initial condition) when bounding induced gains, even though $\tilde v'(\cdot)$ is not an exogenous input in the original nonlinear system.
\end{remark}

\subsubsection{Transitioning between $G$ and $G^I$}
In Definition~\ref{def:embedded_systems} we introduced sister systems associated with each $G\in\mathcal{J}^+$, including the identity-output system $G^I$. The feedback representation of the closure residual is stated most cleanly in this identity-output form. This subsubsection bounds the input--output gap introduced by replacing $\tilde C$ with the identity.
\begin{definition}
\label{def:def_of_phi}
Given an LTI system, $G^L$, with state space representation,
\begin{equation}
\begin{aligned}
\dot{x} = A x + B u \\
y = C x,
\end{aligned}
\end{equation}
let
\begin{align}
\Phi(j \omega) \triangleq (j\omega I - A)^{-1} B,
\end{align}
such that
\begin{align}
\|\Phi\|_{H_\infty} = \sup_{\omega \in \mathbb{R}} \| \Phi(j \omega) \|_{2\to2}.
\end{align}
\end{definition}

\begin{observation}[Bounding the $H_{\infty}$--norm gap for identical $(A,B)$ pairs]
\label{observation:different_c}
Let two stable continuous-time LTI systems share the same state and input matrices $(A,B)$ and have no direct term:
\begin{align}
G_1(s) &= C_1 (sI-A)^{-1} B, \\
G_2(s) &= C_2 (sI-A)^{-1} B,
\end{align}
with $A\in\mathbb{R}^{n\times n}$, $B\in\mathbb{R}^{n\times l}$, and $C_1,C_2$ of compatible dimensions.
Let $\Phi$ denote the resolvent input map associated with $(A,B)$ as in Definition~\ref{def:def_of_phi}.
Then
\begin{align}
\|G_1-G_2\|_{H_\infty}
\;\le\;
\|C_1-C_2\|_{2\to2}\,\|\Phi\|_{H_\infty}.
\end{align}
\end{observation}
\begin{proof}
Pointwise in frequency,
\begin{align}
G_1(j\omega)-G_2(j\omega) \;=\;(C_1-C_2)\,\Phi(j\omega),
\end{align}
so by sub-multiplicativity of the spectral norm,
\begin{align}
\|G_1(j\omega)-G_2(j\omega)\|_{2\to2}
      \;\le\;\|C_1-C_2\|_{2\to2}\,\|\Phi(j\omega)\|_{2\to2}.
\end{align}
Taking the supremum over $\omega$ gives
\begin{align}
\|G_1-G_2\|_{H_\infty}\le\|C_1-C_2\|_{2\to2}\,\|\Phi\|_{H_\infty},
\end{align}

which proves the stated bound.
\end{proof}

\begin{definition}[Output-map embedding]
\label{def:embedded_c}
Let $\tilde C\in\mathbb{R}^{m\times q}$ with $m\le q$, and define the embedding
\[
E_m \triangleq
\begin{bmatrix}
I_m\\
0_{(q-m)\times m}
\end{bmatrix}\in\mathbb{R}^{q\times m}.
\]
Define the embedded output map
\begin{equation}
\label{eq:C0_def}
\tilde C_0 \triangleq E_m \tilde C \in \mathbb{R}^{q\times q}.
\end{equation}
\end{definition}

\begin{corollary}[Different output dimensions]
\label{corollary:embed_c}
In Observation~\ref{observation:different_c}, if $C_1$ and $C_2$ have different output dimensions, replace the smaller map by its embedding $\tilde C_0$ from Definition~\ref{def:embedded_c}. The bound then applies with both output maps interpreted as having the same dimension.
\end{corollary}

\subsubsection{Reduced Nonlinear Systems}
The final bound is obtained by chaining comparisons
\[
G \;\to\; G^I \;\to\; G^P \;\to\; G_r^P \;\to\; G_r^I \;\to\; G_r,
\]
where the subscript $r$ denotes the order-$r$ reduced (truncated) counterpart of the corresponding system.
The nominal plant $G^P$ contains no closure-error feedback and therefore falls directly under Theorem~\ref{theorem:final_bound}, yielding a certified truncation $G^P\to G_r^P$.
However, $G_r^P$ alone does not represent a reduced model of the original system, because it omits the closure-error feedback channel.

Accordingly, the reduced-order construction must specify how this feedback channel is removed and then reintroduced after truncation. This is done by defining reduced-order counterparts of the identity-output and feedback blocks ($G_r^I$ and $G_r^E$) so that the same positive-feedback interconnection used at full order can be formed at reduced order.

In the exact-closure case (no Koopman modeling error), the feedback channel is absent, and the comparison reduces to the direct reduction bound for $G\to G_r$ given by Theorem~\ref{theorem:final_bound}.

\textbf{Theorem~\ref{theorem:bound_with_error}
(Restated for convenience).} \emph{Let $G\in\mathcal{J}^+$ and fix an admissible input class $\mathcal{U}\subset L^2$ (all induced gains evaluated at the equilibrium initial condition).}
\emph{Let $G^I$, $G^P$, and $G^E$ denote the identity-output and feedback sister systems associated with $G$ (Definition~\ref{def:embedded_systems}),}
\emph{and let $G_r^P$, $G_r^E$, and $G_r^I$ denote their order-$r$ reduced counterparts obtained by truncating the nominal LTI surrogate and re-forming the same feedback interconnection at reduced order.}

\emph{Assume the small-gain conditions hold for the full- and reduced-order feedback interconnections:}
\[
\|G^P G^E\|_{H_\infty} < 1,
\qquad
\|G_r^P G_r^E\|_{H_\infty} < 1.
\]
\emph{Assume further that for every $u(\cdot)\in\mathcal{U}$, all trajectories compared below exist for all $t\ge 0$ and remain in the prescribed compact set $\mathcal{X}$.}
\emph{Equivalently, in balanced lifted coordinates $z\in\mathbb{R}^q$ and reduced coordinates $z_r\in\mathbb{R}^r$, we have}
\[
R z(t)\in\mathcal{X},
\qquad
R E z_r(t)\in\mathcal{X}
\quad\forall t\ge 0,
\]
\emph{where $R$ is the state readout map in balanced coordinates and}
\[
E:\mathbb{R}^r\to\mathbb{R}^q,\qquad
E z_r \triangleq \begin{bmatrix} z_r \\ 0_{q-r}\end{bmatrix}
\]
\emph{is the canonical zero-padding embedding.}
\medskip \emph{Let $(G^P)^L$ denote the associated stable LTI surrogate obtained by treating the calibrated actuation signal as an exogenous input $w$:}
\[
\dot z = \tilde A z + \tilde B w,\qquad y = I z,
\]
\emph{and let $\{\nu_i\}_{i=1}^q$ be the Hankel singular values of $(G^P)^L$.}
\emph{Let $\Phi$ and $\Phi_r$ denote the resolvent input maps associated with the full- and order-$r$ truncated surrogates (Definition~\ref{def:def_of_phi}).}
\medskip \emph{Then the induced input--output error between $G$ and its order-$r$ reduced model $G_r$ satisfies}
\begin{equation}
\label{eq:bound_with_error_clean}
\begin{aligned}
\|G - G_r\|_{H_\infty(\mathcal{U})}
\;\le\;&
\|\tilde C_0 - I\|_{2\to2}\,\|\Phi\|_{H_\infty}
\\
&+\;\xi(G^P,G^E)
\\
&+\; 2\Big(\|(G^P)^L\|_{H_\infty} \;+\; \sum_{i=r+1}^{q}\nu_i\Big)
\\
&+\;\xi(G_r^P,G_r^E)
\\
&+\;\|\tilde C_{r,0} - I_{r\times r}\|_{2\to2}\,\|\Phi_r\|_{H_\infty},
\end{aligned}
\end{equation}
\emph{where $\xi(\cdot,\cdot)$ is the small-gain gap bound (Definition~\ref{def:def_of_xi}), and $\tilde C_0$ and $\tilde C_{r,0}$ are the zero-padded embeddings of the full and reduced output maps (Definition~\ref{def:embedded_c}).}

\begin{proof}
Throughout the proof, we bound restricted induced gains using the fact that
$\|G\|_{H_\infty(\mathcal{U})} \le \|G\|_{H_\infty}$ for any operator $G$, since $\mathcal{U}\subset L^2$.

\noindent\emph{Step 1: Replace $\tilde{C}$ with $I$.}
By Observation~\ref{observation:different_c},
\begin{align}
\|G - G^I \|_{H_\infty(\mathcal{U})} \le \| \tilde{C}_0 - I \|_{2\to2} \| \Phi \|_{H_\infty}.
\end{align}

\noindent\emph{Step 2: Remove feedback error.}
By the feedback representation (Lemma~\ref{lemma:error_as_feedback}) and the small-gain gap bound (Definition~\ref{def:def_of_xi}),
\begin{align}
\| G^I - G^P \|_{H_\infty(\mathcal{U})} &\le \xi(G^P, G^E).
\end{align}

\noindent\emph{Step 3: Truncate the nominal plant.}
With $v_P = 0$, the system $G^P$ is in the setting of Theorem~2 (exact-closure/non-feedback case), so applying Theorem~2 to $G^P$
yields an order-$r$ truncation $G_r^P$ such that
\begin{align}
\| G^P - G_r^P \|_{H_\infty(\mathcal{U})} \le 2\|(G^P)^L\|_{H_\infty} + 2\sum_{i=r+1}^q \nu_i.
\end{align}

\noindent\emph{Step 4: Reintroduce feedback error at reduced order.}
Define $G_r^P$ and $G_r^E$ as the reduced-order analogues of Lemma~\ref{lemma:error_as_feedback} so that their positive feedback interconnection
is $G_r^I$. Then, by the same small-gain gap bound,
\begin{align}
\| G_r^I - G_r^P \|_{H_\infty(\mathcal{U})} \le \xi(G_r^P, G_r^E).
\end{align}

\noindent\emph{Step 5: Replace $I$ with $\tilde{C}_r$.}
Define $\tilde{C}_{r,0}$ by applying Definition~\ref{def:embedded_c} to $\tilde{C}_r$, so that $\tilde{C}_{r,0}\in\mathbb{R}^{r\times r}$.

By Observation~\ref{observation:different_c},
\begin{align}
\| G_r^I - G_r \|_{H_\infty(\mathcal{U})} \le \| \tilde{C}_{r,0} - I_{r \times r} \|_{2\to2} \| \Phi_r \|_{H_\infty}.
\end{align}

\noindent
Summing the bounds from Steps~1--5 gives the claimed result.
\end{proof}

\newcommand{\gNa}{g_\mathrm{Na}}
\newcommand{\gK}{g_\mathrm{K}}
\newcommand{\gL}{g_\mathrm{L}}
\newcommand{\ENa}{E_\mathrm{Na}}
\newcommand{\EL}{E_\mathrm{L}}
\newcommand{\EE}{E_\mathrm{E}}
\newcommand{\EI}{E_\mathrm{I}}
\newcommand{\Vscale}{V_{\mathrm{scale}}}
\newcommand{\Vstar}{V^\star}
\newcommand{\EChR}{E_{\mathrm{ChR}}}
\newcommand{\gChRmax}{g_{\mathrm{ChR,max}}}
\newcommand{\Kd}{K_\mathrm{d}}
\newcommand{\RTF}{R T/F}
\newcommand{\Krest}{K_{\mathrm{rest}}}
\newcommand{\Kinn}{K_{\mathrm{in}}}
\newcommand{\dt}{\Delta t}
\newcommand{\alphahh}{\alpha_{\mathrm{HH}}}

\subsection{Hodgkin-Huxley Network: Discrete, Centered Representation}

\subsubsection{Overview}
We model a network of Hodgkin-Huxley (HH) neurons using the same ionic channel kinetics as the classical continuous-time HH equations, but we integrate them with a fixed discrete time step and centered internal coordinates so that $x=0$ is an equilibrium. All gates (ion and synaptic) live in logit space to keep them in $(0,1)$ after discretization, and the control input is a nonnegative optogenetic drive shared across neurons.

\subsubsection{Baseline Single-Neuron HH Model}
The continuous-time HH membrane equation for voltage $V$ (mV) with sodium, potassium, and leak currents is
\[
C_m\,\dot V = -\big(I_{\mathrm{Na}} + I_{\mathrm{K}} + I_{\mathrm{L}} + I_{\mathrm{ext}} \big),
\]
with Ohmic currents
\begin{equation}
\begin{aligned}
I_\mathrm{Na} &= \gNa m^3 h (V - \ENa) \\
I_\mathrm{K} &= \gK n^4 (V - E_\mathrm{K}) \\  
I_\mathrm{L} &= \gL (V - \EL),
\end{aligned}
\end{equation}
where $m,h,n\in(0,1)$ are gating probabilities. The gating ODEs use voltage-dependent rate constants $a_p(V),b_p(V)$:
\begin{align}
\dot p &= a_p(V)(1-p) - b_p(V)p, \qquad p\in\{m,h,n\},
\end{align}
with the empirical formulas
\begin{align*}
a_m(V)&=0.1\,\frac{25-V}{e^{(25-V)/10}-1}, & b_m(V)&=4e^{-V/18},\\
a_h(V)&=0.07e^{-V/20}, & b_h(V)&=\frac{1}{e^{(30-V)/10}+1},\\
a_n(V)&=0.01\,\frac{10-V}{e^{(10-V)/10}-1}, & b_n(V)&=0.125e^{-V/80}.
\end{align*}
We also use the steady-state/time-constant view $p_\infty(V)=a_p/(a_p+b_p)$ and $\tau_p(V)=1/(a_p+b_p)$.

\subsubsection{Centered internal coordinates}
We work with a centered \emph{internal} state $x_k\in\mathbb{R}^{4N+N_E+N_I}$, but perform all physics and updates in an \emph{offset state}
\[
\hat x_{k} \;\triangleq\; x_k + x_{\star},
\]
where the constant offset $x_{\star}$ is chosen so that $x=0$ is a fixed point of the full discrete network map (see below).
We partition the offset-state as
\[
\hat x_{k} =
\big[\,\hat V_{k}^\top,\ z_{m,k}^\top,\ z_{h,k}^\top,\ z_{n,k}^\top,\ z_{s_E,k}^\top,\ z_{s_I,k}^\top\,\big]^\top,
\]
with $\hat V_{k}\in\mathbb{R}^N$, $z_{m,k},z_{h,k},z_{n,k}\in\mathbb{R}^N$, $z_{s_E,k}\in\mathbb{R}^{N_E}$, and $z_{s_I,k}\in\mathbb{R}^{N_I}$.

\paragraph{Physical reconstruction from the offset-state.}
Voltage is reconstructed by
\[
V_k \;=\; \EL\mathbf{1} + \Vscale\, \hat V_{k},
\]
and ion/synaptic gates are reconstructed via logits:
\begin{equation}
\begin{aligned}
m_k=\sigma(z_{m,k}),\quad h_k=\sigma(z_{h,k}),\\ n_k=\sigma(z_{n,k}),\quad s_{E,k}=\sigma(z_{s_E,k}),\quad s_{I,k}=\sigma(z_{s_I,k}),
\end{aligned}
\end{equation}
where $\sigma(z)=\tfrac12(1+\tanh(\tfrac{z}{2}))$.
In software, the inverse logit $z=\log\!\frac{p}{1-p}$ is evaluated after clipping $p$ to $[\varepsilon,1-\varepsilon]$ with $\varepsilon=10^{-6}$ to avoid overflow.

\paragraph{Initialization and refinement of $x_{\star}$.}
We initialize $x_{\star}$ from the single-cell rest guess $V^\star=-54.4$ mV and the classical HH steady states
$m^\star=3.44\!\times\!10^{-5}$, $h^\star=0.9998$, $n^\star=0.00416$, together with synaptic gates centered near closed ($s^\star=10^{-4}$).
This yields the initial offset blocks
\[
V_{\star} = \frac{V^\star-\EL}{\Vscale}\mathbf{1},\qquad
z_{p,\star}=\log\!\frac{p^\star}{1-p^\star},\;\; p\in\{m,h,n,s_E,s_I\}.
\]
Because random connectivity breaks symmetry, the true network equilibrium is generally \emph{neuron-specific}; accordingly, after refinement the voltage offset $V_{\star}\in\mathbb{R}^N$ need not be constant across neurons.
We refine $x_{\star}$ numerically by simulating the internal dynamics from $x_0=0$ with $u\equiv 0$ for many steps, obtaining a terminal drift $x_T$, and updating
\[ 
x_{\star} \leftarrow \hat x_{\star} + x_T,
\]
so that (empirically) $x=0$ becomes a fixed point of the full network map used for data generation and learning.

\subsubsection{Discrete-time update}
With step size $\dt=0.05$, the discrete map updates the \emph{offset-state} $\hat x_{k}$ and then recenters:
\[
\hat x_{k} = x_k + \hat x_{\star},\qquad
x_{k+1} = \hat x_{k+1} - \hat x_{\star}.
\]

\paragraph{Voltage update.}
We use forward Euler on the offset-voltage coordinate $\hat V_{k}$:
\[
\hat V_{k+1}
\;=\;
\hat V_{k}
-\dt\,\alphahh\Big(I_{\mathrm{Na}} + I_{\mathrm{K}} + I_{\mathrm{L}} + I_{\mathrm{syn}} + I_{\mathrm{ChR}}\Big),
\]
where all currents are computed in physical units from the reconstructed $V_k=\EL\mathbf{1}+\Vscale \hat V_{k}$ and gates.
The scalar
\begin{equation}
\begin{aligned}
\alphahh \;&\triangleq\; \frac{\tau_{\mathrm{scale}}\gL}{C_m \Vscale} \\
(\tau_{\mathrm{scale}}=1,\; C_m=1,\; \gL&=0.3,\; \Vscale=20\;\Rightarrow\;\alphahh=0.015)
\end{aligned}
\end{equation}
is the constant voltage time-scale factor used in the implementation; equivalently, the effective voltage step size is the product $\dt\,\alphahh$.

\paragraph{Ion-channel gate updates}
Ion-channel gates are updated in probability space using exponential Euler with $V_k$ held fixed over the step:
\[
p_{\infty}(V) = \frac{a_p(V)}{a_p(V)+b_p(V)}, \qquad
\tau_p(V) = \frac{1}{a_p(V)+b_p(V)},
\]
\begin{equation}
\begin{aligned}
p_{k+1} = p_{\infty}(V_k) + \big(p_k - p_{\infty}(V_k)\big)\,e^{-\dt/\tau_p(V_k)},\\
z_{p,k+1} = \log\!\frac{p_{k+1}}{1-p_{k+1}},\quad p\in\{m,h,n\}.
\end{aligned}
\end{equation}
The rate functions are the classical HH empirical forms
\begin{align*}
a_m(V)&=0.1\,\frac{25-V}{e^{(25-V)/10}-1}, & b_m(V)&=4e^{-V/18},\\
a_h(V)&=0.07e^{-V/20}, & b_h(V)&=\frac{1}{e^{(30-V)/10}+1},\\
a_n(V)&=0.01\,\frac{10-V}{e^{(10-V)/10}-1}, & b_n(V)&=0.125e^{-V/80},
\end{align*}
interpreted in their continuous extension at the removable singularities.

\paragraph{Synaptic gate updates}
Synaptic gates mirror the same relaxation form, driven by presynaptic voltages.
Let
\[
s_\infty(V;V_\theta,k)=\tfrac12\!\left(1+\tanh\!\frac{V-V_\theta}{2k}\right),
\]
with $V_{\theta,E}=V_{\theta,I}=-20$ and $k_E=k_I=2$.
In the implementation, the first $N_E$ neurons are labeled excitatory and the remaining $N_I$ inhibitory, so $V_{\mathrm{pre},E,k}$ and $V_{\mathrm{pre},I,k}$ are taken from those respective presynaptic voltage blocks.
We update
\[
s_{E,k+1} = s_\infty(V_{\mathrm{pre},E,k}) + \big(s_{E,k}-s_\infty(V_{\mathrm{pre},E,k})\big) e^{-\dt/\tau_E}, \; \tau_E=3,
\]
\[
s_{I,k+1} = s_\infty(V_{\mathrm{pre},I,k}) + \big(s_{I,k}-s_\infty(V_{\mathrm{pre},I,k})\big) e^{-\dt/\tau_I}, \; \tau_I=6,
\]
then map back to logits $z_{s_E,k+1}=\log\!\frac{s_{E,k+1}}{1-s_{E,k+1}}$ and $z_{s_I,k+1}=\log\!\frac{s_{I,k+1}}{1-s_{I,k+1}}$.

\subsubsection{Inputs: Optogenetic Drive}
The control $u\ge 0$ is an optogenetic drive broadcast to all neurons (or provided per neuron). The ChR2 photocurrent is
\begin{align*}
I_{\mathrm{ChR}} &= \gChRmax\, \frac{u}{u+\Kd}\,(V-\EChR), \\
\gChRmax&=50,\; \Kd=1,\; \EChR=0.
\end{align*}
Negative inputs are clamped to zero before the saturation map.

\subsubsection{Synapses and Network Coupling}
Each neuron is excitatory or inhibitory; the first $N_E=\lfloor f_E N \rceil$ are excitatory, the rest inhibitory, with $f_E=0.8$ by default. Connectivity matrices $G_E\in\mathbb{R}^{N\times N_E}$ and $G_I\in\mathbb{R}^{N\times N_I}$ are sampled with Erd\H{o}s-Renyi masks at probability $p=0.2$, scaled by $1/\sqrt{pN}$ and base gains $gE_{\mathrm{base}}=0.1$, $gI_{\mathrm{base}}=0.6$.

Synaptic gates obey voltage-driven sigmoids on the presynaptic voltages,
\[
s_\infty(V;V_\theta,k)=\tfrac12\!\left(1+\tanh\!\frac{V-V_\theta}{2k}\right),
\]
with $V_{\theta,E}=V_{\theta,I}=-20$, $k_E=k_I=2$. Discrete synaptic updates mirror the ion-gate exponential Euler:
\begin{equation}
\begin{aligned}
s_{E,k+1} &= s_\infty(V_{\mathrm{pre},E,k}) + \big(s_{E,k}-s_\infty(V_{\mathrm{pre},E,k})\big) e^{-\dt/\tau_E}, \\ \tau_E=3,\\
s_{I,k+1} &= s_\infty(V_{\mathrm{pre},I,k}) + \big(s_{I,k}-s_\infty(V_{\mathrm{pre},I,k})\big) e^{-\dt/\tau_I}, \\ \tau_I=6,\\
z_{s_E,k+1} &= \log\!\frac{s_{E,k+1}}{1-s_{E,k+1}}, \; z_{s_I,k+1} = \log\!\frac{s_{I,k+1}}{1-s_{I,k+1}}.
\end{aligned}
\end{equation}
Synaptic currents to postsynaptic neuron $i$ are
\begin{align*}
I_{\mathrm{syn},i} = (G_E s_E)_i(V_i-\EE) + (G_I s_I)_i(V_i-\EI), \\ \EE=0,\; \EI=-80.
\end{align*}

\subsubsection{Full, centered discrete state and the implemented map}
The centered internal state used by learning/control is
\begin{equation}
\begin{aligned}
x = \big[&\hat V^\top,\; 
\tilde{z}_m^\top,\;
\tilde{z}_h^\top,\;
\tilde{z}_n^\top,\;
\tilde{z}_{s_E}^\top,\;
\tilde{z}_{s_I}^\top\big]^\top,
\end{aligned}
\end{equation}
with dimension $4N+N_E+N_I$.
Here $\hat V$ and all $\tilde z$ variables are centered coordinates. The corresponding offset coordinates are obtained by the affine shift $\hat x = x + x_\star$.
Given $(x_k,u_k)$, the discrete map $x_{k+1}=f(x_k,u_k)$ is implemented by:
(i) forming $\hat x_{k}=x_k+ x_{\star}$,
(ii) reconstructing physical $(V,m,h,n,s_E,s_I)$ via $V=\EL\mathbf{1}+\Vscale \hat V$ and $\sigma(\cdot)$ on logits,
(iii) applying the voltage forward-Euler update with factor $\alphahh$,
(iv) applying exponential-Euler updates for ion and synaptic gates, with logits computed via $\log\!\frac{p}{1-p}$ (with probability clipping in software),
and (v) applying the inverse shift $x_{k+1}=\hat x_{k+1}-x_\star$.

All data generation stores physical coordinates $(V,m,h,n,s_E,s_I)$ obtained by this invertible reconstruction.

\end{document}